\documentclass{amsart}

\usepackage[utf8]{inputenc} % Required for inputting international characters
\usepackage[T1]{fontenc} % Output font encoding for international characters
\usepackage{amsthm}
\usepackage{amssymb}
\usepackage{mathrsfs}
\usepackage{multicol}
\usepackage{xcolor}
\usepackage{tikz}
\usetikzlibrary{shapes.geometric}
\usetikzlibrary{fit}
\usetikzlibrary{cd}
\usetikzlibrary{arrows,topaths}
\tikzstyle{rect}=[rectangle, minimum width=3cm, minimum height=1cm, text centered, text width=3cm]
\tikzstyle{rect1}=[rectangle, minimum width=3cm, minimum height=1cm, text centered, text width=4cm]
\tikzstyle{rect2}=[rectangle, minimum width=3cm, minimum height=1cm, text centered, text width=5cm]

\usepackage{hyperref}
\hypersetup{
	colorlinks=true,
	linkcolor=blue,
	filecolor=magenta,      
	urlcolor=cyan,
}

%--------------------------------------------------------------------------------
%     THEOREMSTYLE AND NUMBERING
%--------------------------------------------------------------------------------

\theoremstyle{plain}
\newtheorem{prop}{Proposition}
\newtheorem{thm}[prop]{Theorem}

\newtheorem{lem}[prop]{Lemma}

\newtheorem*{theoA}{Theorem A}

\theoremstyle{definition}
\newtheorem{defi}[prop]{Definition}

\newtheorem{hypo}{Hypothesis}

\newtheorem{notation}[prop]{\bf Notation}

\theoremstyle{remark}
\newtheorem{rem}[prop]{Remark}
\newtheorem{example}{Example}

\numberwithin{prop}{section}
%\numberwithin{example}{section} 
\numberwithin{equation}{section}
%\numberwithin{defi}{section}
%\numberwithin{exer}{subsection}
%\numberwithin{claim}{prop}
%\numberwithin{step}{prop}

%--------------------------------------------------------------------------------
%     NUMBERS
%--------------------------------------------------------------------------------
\newcommand{\N}{\mathbb{N}}
\newcommand{\Z}{\mathbb{Z}}

%--------------------------------------------------------------------------------
%     MISC
%--------------------------------------------------------------------------------

\newcommand{\image}{\mathrm{im}}

\newcommand{\argu}{\hbox to 7truept{\hrulefill}}

\newcommand{\iid}{\mathrm{id}}
\newcommand{\RestrTo}[1]{|_{#1}}
\newcommand{\dom}{\mathrm{dom}}

\newcommand{\Stab}{\mathrm{Stab}}

\newcommand{\qgraph}{\!\setminus\!\!\!\setminus}
\newcommand{\be}{{\bf e}}

\newcommand{\ob}{\mathrm{Ob}}
\newcommand{\arr}{\mathrm{Arr}}

%--------------------------------------------------------------------------------
%     TOPOLOGY AND GROUPOIDS
%--------------------------------------------------------------------------------

\newcommand{\caC}{\mathcal{C}}
\newcommand{\caD}{\mathcal{D}}
\newcommand{\caF}{\mathcal{F}}
\newcommand{\caT}{\mathcal{T}}
\newcommand{\caG}{\mathcal{G}}

\newcommand{\caH}{\mathcal{H}}
\newcommand{\caK}{\mathcal{K}}

\newcommand{\caP}{\mathcal{P}}
\newcommand{\caR}{\mathcal{R}}
\newcommand{\caS}{\mathcal{S}}

\newcommand{\frD}{\mathfrak{D}}

\newcommand{\frG}{\mathfrak{G}}

\newcommand{\eps}{\varepsilon}

\newcommand{\tivarphi}{\tilde{\varphi}}

\newcommand{\dist}{\mathrm{dist}}

%%% CATEGORY %%%
\newcommand{\catname}[1]{{\normalfont\textbf{#1}}}
\newcommand{\Set}{\catname{Set}}

\newcommand{\Grp}{\catname{Grp}}
\newcommand{\Grpd}{\catname{Grpd}}
\newcommand{\PGam}{\catname{$\caP\Gamma$}}
\newcommand{\Graphs}{\catname{Graphs}}
\newcommand{\PB}{\catname{PB}}

%----------------------------------------------------------------------------------------
%	COLORS
%----------------------------------------------------------------------------------------
\definecolor{aogreen}{rgb}{0.0, 0.5, 0.0}
\definecolor{applegreen}{rgb}{0.55, 0.71, 0.0}
\definecolor{darkspringgreen}{rgb}{0.09, 0.45, 0.27}
\definecolor{darkturquoise}{rgb}{0.0, 0.81, 0.82}
\definecolor{brandeisblue}{rgb}{0.0, 0.44, 1.0}
\definecolor{crimson}{rgb}{0.86, 0.08, 0.24}
\definecolor{capri}{rgb}{0.0, 0.75, 1.0}
\usepackage{graphicx} % Required for inserting images
\graphicspath{{Figures/}}
%%%%%%%%%%%
%%%%%%%%%%%
%%%%%%%%%%%

%%%%%%%%%%%
%%% TITLE
%%%%%%%%%%%
\title{Bass-Serre theory for groupoids}

\author[G. dal Verme, T. Weigel]{Giulia dal Verme$^1$ and Thomas Weigel$^2$$^{*}$}

\address{$^{1}$ Dipartimento di Matematica e Applicazioni, Universit\`a degli Studi di Milano-Bicocca, Ed.~U5, Via R.Cozzi 55,
20125 Milano, Italy.}
\email{\textcolor[rgb]{0.00,0.00,0.84}{giulia.dalverme@unimib.it}}

\address{$^{2}$ Dipartimento di Matematica e Applicazioni, Universit\`a degli Studi di Milano-Bicocca, Ed.~U5, Via R.Cozzi 55,
20125 Milano, Italy.}
\email{\textcolor[rgb]{0.00,0.00,0.84}{thomas.weigel@unimib.it}}
\date{Received: xxxxxx; Revised: yyyyyy; Accepted: zzzzzz.
\newline \indent $^{*}$ Corresponding author}
\keywords{Groupoids, graphs of groupoids, Bass-Serre theory}
\subjclass[2010]{18B40, 20L05}
%%%%%%%%%%%%
%%% DOCUMENT
%%%%%%%%%%%%
\begin{document}
\setcounter{page}{1}

\maketitle

\begin{abstract}
	In this paper a Bass-Serre theory 
	in the groupoid setting is developed and a structure theorem (cf. Theorem \ref{thm:structure}) is established.
	Any groupoid action without inversion of edges on a forest induces a graph of groupoids,
	while any graph of groupoids satisfying certain hypothesis admits a canonical associated groupoid, called the fundamental groupoid, and a forest, called the Bass-Serre forest, such that the fundamental groupoid acts on the Bass–Serre forest. 
	The structure theorem states that these processes are mutually inverse.
\end{abstract}

%%%%%%%%%%%%%
%%%% Intro %%%%%%
%%%%%%%%%%%%%
\section{Introduction}
\label{s:intro}
Actions of groups on trees play a fundamental role in many areas of mathematics. 
A part of J-P. Serre’s extensive contribution to the theory was the introduction of graphs of groups in \cite{ser:trees}. The theory of graphs of groups was further developed by H. Bass in \cite{bass}, and is now known as Bass-Serre theory.

The well-known Bass-Serre theory gives a complete and satisfactory description of groups acting on trees via the structure theorem. 
A graph of groups consists of a connected graph $\Gamma$ together with a group for each vertex and edge of $\Gamma$, and group monomorphisms from each edge group to the adjacent vertex groups. Any group action (without inversion of edges) on a tree induces a graph of groups,
while any graph of groups has a canonical associated group, called the {\it fundamental group}, and a tree, called the {\it Bass–Serre tree}, such that the fundamental group acts on the Bass–Serre tree. 
The structure theorem says that these processes are mutually inverse, so that graphs of groups \lq\lq encode\rq\rq\,  group actions on trees.
Graphs of groups are now fundamental tools in geometric group theory and low-dimensional topology. The fundamental group of a graph of groups generalises two basic constructions in combinatorial group theory, namely free products with amalgamation and HNN extensions.
These constructions correspond in topology to taking a connected sum and adding a handle, respectively.

Groupoids are small categories in which every morphism is invertible (see \cite{brown:fromgrp}). 
From an algebraic point of view,
groupoids can be thought as a generalization of a group,
i.e., a groupoid is a set with partial multiplication on it that could
contain many identities.
Every groupoid defines a disjoint union of groups, and thus a choice of base points is part of the language. This leads to the definition of the {\it source} and {\it range} maps for a groupoid. Despite the analogies between groups and groupoids, generalizing structural results from groups to groupoids is in general a highly non-trivial task.
The purpose of this paper fits in this paradigm: our intention is to establish a Bass-Serre theory for groupoids.

The concept of a group action on a space was generalized to a groupoid action and it has applications to dynamical systems, representation theory and operator algebras. If groups can roughly be described as the set of symmetries of certain objects, then groupoids can be thought as the set of symmetries of fibered objects.

In Sections \ref{s:prel} and \ref{s:gpdactions} it is convenient to look at groupoids from a categorical point of view, while in Sections \ref{s:graphgpd}, \ref{s:gractforest}, \ref{s:fundgpd1} and \ref{s:structhm} we use an algebraic point of view. 
The collection $\caG^{(0)}$ of idempotent elements in a groupoid $\caG$ is called its 
{\it unit space}, since these are precisely the elements $x$ that satisfy $x\alpha=\alpha$
and $\beta x=\beta$ whenever these products are defined. 
When considering an action of $\caG$ on a graph $\Gamma$, 
this leads to a fibered structure of $\Gamma$ over $\caG^{(0)}$.

A graph of groupoids $\caG(\Gamma)$ is given by a connected graph $\Gamma$ together with a groupoid for each vertex and edge of $\Gamma$, and monomorphisms from each edge groupoid to the adjacent vertex groupoid. 
%Under certain hypotheses, 
We will only work with graph of groupoids having discrete vertex and edge groupoids.
We associate to any graph of groupoids $\caG(\Gamma)$ a groupoid $\pi_1(\caG(\Gamma))$, called the {\it fundamental groupoid}, and a forest $X_{\caG(\Gamma)}$,
called the {\it Bass-Serre forest}, such that $\pi_1(\caG(\Gamma))$ acts on $X_{\caG(\Gamma)}$.
We will use an algebraic approach to define the
fundamental groupoid $\pi_1(\caG(\Gamma))$ by generators and relations (see \cite{hig64} for presentations of groupoids).

One of the main differences between the group and groupoid settings is the following:
in the classical setting, given a group action without inversion on a graph, one of 
the ingredients used to build a graph of groups is the quotient graph given by such an action;
in the groupoid context, there is no canonical graph associated to the action of a groupoid 
on a graph. 
In the groupoid framework the construction of a graph which works similar to the quotient graph in the group setting has been challenging and requires some technical arguments. 
Given a groupoid $\caG$ acting without inversion on a forest $F$,  we use the fibered structure of $F$ on $\caG^{(0)}$ and the equivalence relation $\caR_\caG$ defined by the action $\caG$ on $F$ to recover the vertex set $F^0$. 
We then define a {\it tree of representatives} of the action of $\caG$ on $F$.
Subsequently, we are able to associate a graph of groupoids to the action of $\caG$ on $F$ via the definition of a
{\it desingularization} of a groupoid action on a graph. 
As a consequence, we obtain the following structure theorem (cf. Theorem \ref{thm:structure}) in this setting.
\begin{theoA}
\label{th:A}
 Let $\caG$ be a groupoid acting without inversion of edges on a forest $F$ such that each fiber (with respect to the momentum map) of the forest is a tree.
 Then $\caG$ is isomorphic to the fundamental groupoid of the graph of groupoids defined by a desingularization of the action of $\caG$ on $F$.
\end{theoA}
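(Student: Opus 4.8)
The plan is to transpose Serre's classical argument for groups acting on trees to the groupoid setting, the one genuinely new feature being that every object in sight is fibred over the unit space $\caG^{(0)}$. A desingularization of the action produces the quotient graph $\Gamma$ of $\caG \backslash F$ together with a lift of each vertex and edge of $\Gamma$ to a representative in $F$ and, for each edge $e$, a transition arrow $g_e \in \caG$ recording how the lifted terminus of $e$ is carried onto the chosen representative of its orbit. I would take the associated graph of groupoids $\caG(\Gamma)$ to be the one prescribed by this data: vertex and edge groupoids the stabilizers in $\caG$ of the chosen representatives (assembled into groupoids fibred over $\caG^{(0)}$), and edge monomorphisms the inclusions of stabilizers twisted by the $g_e$. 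The goal is then an explicit isomorphism $\caG \cong \pi_1(\caG(\Gamma))$.

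First I would fix a maximal subforest $T$ of $\Gamma$ and define a morphism $\Phi \colon \pi_1(\caG(\Gamma)) \to \caG$ on generators: an element of a vertex or edge groupoid is sent to itself under the identification of that stabilizer with a sub-groupoid of $\caG$, and the stable letter attached to an edge $e$ is sent to the transition arrow $g_e$ --- an identity arrow whenever $e$ lies in $T$. The first substantive step is to check that $\Phi$ respects the defining relations of the fundamental groupoid: the compatibility between the two edge monomorphisms and $g_e$, and the relations collapsing $T$. By construction of the desingularization these reduce to identities already satisfied by the transition arrows, the only real care being that every composite that occurs is actually defined, i.e.\ that sources and ranges match in $\caG^{(0)}$.

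To see that $\Phi$ is an isomorphism I would compare the two forests on which the two groupoids act. Surjectivity is the easy half: the stabilizers of the representatives together with the transition arrows generate $\caG$, because $F$ is connected on each component and its vertices are exhausted by the orbits of the representatives, so every arrow of $\caG$ is realized by transporting along a path in the fundamental domain. For injectivity I would produce a $\caG$-equivariant isomorphism between $F$ and the Bass-Serre forest $X_{\caG(\Gamma)}$; granting this, an element of the kernel of $\Phi$ would act trivially on $X_{\caG(\Gamma)}$, hence correspond to a reduced word acting as the identity, which the forest structure forbids for nontrivial reduced words.

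The hard part will be exactly this last point --- the faithfulness of the action, i.e.\ the injectivity of $\Phi$. It rests on a normal-form theorem for $\pi_1(\caG(\Gamma))$ (reduced words are nontrivial and move base points in $X_{\caG(\Gamma)}$) together with the identification of $X_{\caG(\Gamma)}$ with $F$, and it is here that one must handle the groupoid bookkeeping with care: at every stage the partial products must be defined, and the stabilizers must genuinely assemble into vertex and edge groupoids over $\caG^{(0)}$. This is precisely the obstruction that the desingularization hypothesis is designed to remove, and it is where the argument departs from the purely group-theoretic one.
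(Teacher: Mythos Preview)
Your outline is correct and tracks the paper's proof of Theorem~\ref{thm:structure} closely: define $\psi\colon\pi_1(\caG(\Gamma))\to\caG$ on generators (vertex-groupoid elements to themselves, stable letters to the transition arrows $g_{e,x}$), build the $\psi$-equivariant graph map $\Psi\colon X_{\caG(\Gamma)}\to F$, obtain surjectivity of both from the fact that the desingularization exhausts $F$, and deduce injectivity of $\psi$ from that of $\Psi$ together with the observation that $\psi$ is already an isomorphism on each vertex stabilizer. The one technical step you leave implicit but the paper spells out is why $\Psi$ is injective: it is done fibrewise via a local criterion (Lemma~\ref{lem:injhomo}) --- a graph map from a connected graph to a tree that is injective on every star is globally injective --- with the star condition verified by identifying $\mathrm{st}_{X_{\caG(\Gamma)}}(x\caG_v[v])$ with the coset decomposition $\bigsqcup_{e\in\mathrm{st}_\Gamma(v)} \caG_v/\mathrm{im}(\alpha_e)$.
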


In classical Bass-Serre theory, given a group action without inversion on a tree $G\curvearrowright T$, one defines the vertex group $G_x$, $x\in\Lambda^0$, where $\Lambda=G\backslash T$ is the quotient graph, to be the stabilizers of a vertex representative $x'\in p^{-1}(x)$, where $p\colon T\to G\backslash T$ is the natural projection. 
Given a groupoid $\caG$ acting on a forest $F$, we define the vertex groupoids to be the stabilizers of the partial sections given by a desingularization of the action of $\caG$ on $F$.
Moreover, as in the group context, the vertex groupoids are isomorphic by the conjugacy relation.

In case one considers graph of groupoids given by groupoids whose unit space is a singleton, 
one recovers the classical Bass-Serre theory.

%%%%%%%%%%%%%
%%%%%%%%%%%%%
%%%%%%%%%%%%%
\section{Preliminaries}
\label{s:prel}

\subsection{Graphs}
\label{ss:graphs}
The notion of graph we use in this paper is the one used by J-P. Serre in \cite{ser:trees}, apart from the fact that we allow the graph with empty vertex set. This graph will be called the {\it empty graph}.
A {\it graph} $\Gamma$ (in the sense of J-P. Serre) consists of a set of vertices $\Gamma^0$, 
a set of edges $\Gamma^1$, a terminus map $t\colon\Gamma^1\to\Gamma^0$, 
an origin map $o\colon\Gamma^1\to\Gamma^0$ 
and an edge-reversing map $\bar{}\,\colon\Gamma^1\to\Gamma^1$ satisfying
\begin{equation*}
\bar{e}\ne e, \quad \bar{\bar{e}}=e,\quad t(\bar{e})=o(e).
\end{equation*}
Such a graph $\Gamma$ can be viewed as an {\it unoriented} (or {\it undirected}) graph
in which each geometric edge is replaced by a pair of edges $e$ and $\bar{e}$.
	
A {\it subgraph} $\Lambda$ of $\Gamma$ consists of a non-empty subset $\Lambda^0\subseteq\Gamma^0$ and a subset $\Lambda^1\subseteq\Gamma^1$
such that $\overline{\Lambda^1}\subseteq\Lambda^1$,
$t(\Lambda^1)\subseteq\Lambda^0$ and $o(\Lambda^1)\subseteq \Lambda^0$.
	
An {\it orientation} $\Gamma_+^1\subseteq\Gamma^1$ of $\Gamma$ is a set of edges containing
exactly one edge from each pair $\{e,\bar{e}\}$, $e\in\Gamma^1$.

Let $\Gamma$ and $\Lambda$ be two graphs. A {\it graph homomorphism} is a pair of mappings
	$\psi=(\psi^0,\psi^1)$, where 
	$\psi^0\colon\Gamma^0\to\Lambda^0$ and $\psi^1\colon\Gamma^1\to\Lambda^1$, which commutes 
	with $t$, $o$ and $\,\bar{}$ , i.e., 
	\begin{equation*}
	\psi^0(o(e))=o(\psi^0(e)),\quad \psi^0(t(e))=t(\psi^0(e)), \quad \psi^1(\bar{e})=\overline{\psi^1(e)}.
	\end{equation*}
A homomorphism of graphs is called an {\it isomorphism} if $\psi^0$ and $\psi^1$ are bijective.
By definition, any subgraph $\Lambda$ of a graph $\Gamma$ defines a canonical injective homomorphism of graphs $\iota\colon\Lambda\to\Gamma$.

For $v\in\Gamma^0$ we call 
\begin{equation*}
\mathrm{st}_\Gamma(v)=\{e\in\Gamma^1\mid t(e)=v\}
\end{equation*}
the {\it star} of $v$ in $\Gamma$. The cardinality of $\mathrm{st}_\Gamma(v)$ is called the {\it valence} of $v$ in $\Gamma$.
We say that $\Gamma$ is {\it locally-finite} if each vertex has finite valence.
A locally-finite graph such that each vertex has valence $k$ will be called {\it $k$-regular}.\\
A vertex of $\Gamma$ is said to be {\it singular} if it has valence one, i.e.,
if it is the terminus (equivalently, origin) of a unique edge.
We say that $\Gamma$ is {\it nonsingular} if it has no singular vertices.

\subsection{Paths}
\label{ss:paths}
	A {\it path} of length $n$ in $\Gamma$ is either a vertex $v\in\Gamma^0$ (when $n=0$),
	or a sequence of edges $e_1\dots e_n$ with $o(e_i)=t(e_{i+1})$ for all $i=1,\dots,n-1$.
	For a path $p=e_1\dots e_n$ with $t(e_1)=v$ and $o(e_n)=w$ we say that $p$ is a path from $v$ to $w$.
	We put $t(p)=t(e_1)$ and $o(p)=o(e_n)$ and denote by $\ell(p)=n$ the length of the path $p$.
	If $e_{i+1}\ne \bar{e}_i$ for all $i=1,\dots, n-1$, then we say that $p$ is {\it without backtracking}.
	A path of length $n$ is said to be {\it reduced} if either $n=0$ or if there is no backtracking.
	We denote by $\caP_{v,w}$  the set of all paths from $v$ to $w$ without backtracking.
	A {\it cycle} is a path with origin equal to its terminus.
	We say that the non-empty graph $\Gamma$ is {\it connected} if $\caP_{v,w}\ne\emptyset$ for all $v,w\in\Gamma^0$.
	For a path $p=e_1\dots e_n$, we define the {\it reversal} path $\bar{p}=\bar{e}_n\cdots\bar{e}_1$.

 In a connected graph $\Gamma$ one defines a distance function 
$\mathrm{dist}_\Gamma\colon \Gamma^0\times\Gamma^0\to\N_0$ by 
\begin{equation}
\mathrm{dist}_\Gamma(v,w)=\min\{\,\ell(p)\mid p \in\caP_{v,w}\,\},\quad v,w,\in\Gamma^0,
\end{equation}
which satisfies $\mathrm{dist}_\Gamma(v,w)=0$ if and only if $v=w$.\\

\begin{defi}
	A graph $\Gamma$ is said to be a {\it tree} if it is connected and for every $v\in\Gamma^0$, 
	the only reduced path which starts and ends at $v$ is the path of length $0$ at $v$.
A {\it forest} is a graph whose connected components are trees.
\end{defi}
Equivalently, $\Gamma$ is a tree if $|\caP_{v,w}|=1$ for all $v,w\in\Gamma^0$.

\begin{rem}
    Let $\Gamma$ be a connected graph. The set $\operatorname{SubTr}(\Gamma)$ of subgraphs of $\Gamma$ which are trees, ordered by inclusion, is directed and non-empty. 
    If $\Omega\subseteq\operatorname{SubTr}(\Gamma)$ is a totally ordered subset, 
    then $\bigcup_{T\in\Omega} T$ is a subtree of $\Gamma$, and thus, an upper bound for $\Omega$.
    Thus, by Zorn's lemma, $\operatorname{SubTr}(\Gamma)$ has maximal elements, which are called {\it maximal subtrees}.
	Note that for a maximal subtree $T\subseteq\Gamma$ one has that $T^0=\Gamma^0$ 
	(see \cite[Proposition I.11]{ser:trees}).
\end{rem}

%%%%%%%%%%%%%
%%%%%%%%%%%%%
%%%%%%%%%%%%%
\subsection{Categories and Groupoids}

The algebraic structures that will be studied in the subsequent sections are groupoids.
For this purpose we recall the basic notions in category theory. For further details, the interested the reader is referred to \cite{brown:topgpd}.

A {\it category} $\caC$ consists of
\begin{itemize}
    \item[(i)] a class $\ob(\caC)$, called the class of {\it objects} of $\caC$;
    \item[(ii)] for each $x,y$ in $\ob(\caC)$ a set $\caC(x,y)$ called the set of {\it morphisms} in $\caC$ from $x$ to $y$;
    \item[(iii)] a function, called {\it composition}, which assigns to each $g\in \caC(y, z)$  and to each $f\in\caC(x, y)$ an element $gf\in\caC(x, z)$; that is, composition is a function
$\caC(y, z)\times\caC(x,y)\to\caC(x,z)$.
\end{itemize}
These data must satisfy the axioms:
\begin{itemize}
    \item[(1)] (Associativity) If $h\in\caC(z,w)$, $g\in\caC(y,z)$, $f\in\caC(x,y)$, then $h(gf)=(hg)f$.
    \item[(2)] (Existence of identities) For each $x$ in $\ob(\caC)$ there is an element $1_x$ in $\caC(x,x)$ such that for all $g\in\caC(w,x)$ and  $f\in\caC(x,y)$ one has $1_x g=g$ and $f1_x=f$.
\end{itemize}
For each $x$ in $\ob(\caC)$ the
identity in $\caC(x,x)$ is unique, since if $1_x$, $1_x'$ are both identities in $\caC(x,x)$, then $1_x=1_x 1_x'=1_x'$.

We shall always assume that the various sets $\caC(x,y)$ are disjoint.

\begin{example}
    \begin{itemize}
        \item[(i)] $\Grp$ is the category whose objects are groups and morphisms are group homomorphisms.
        \item[(ii)] $\Set$ is the category whose objects are all sets and whose morphisms $X\to Y$ are simply the functions from $X$ to $Y$, and whose composition is the usual composition of functions. The identity in $Set(X, X)$ is the identity function $1_X$.
        \item[(iii)] If $G$ is group, then $G$ is also a category with one object, i.e., the identity $1$ of $G$, with morphisms $1\to 1$ the elements of $G$, and with composition the multiplication of $G$. Actually, for this construction one needs only that $G$ is a monoid.
        \item[(iv)] $\Graphs$ is the category which has graphs as objects and graph homorphisms as morphisms. 
        \item[(v)] Let $\Gamma$ be an undirected graph. The category {\bf $\PGam$} of paths on $\Gamma$ has the set $\Gamma^0$ as its set of objects, and for any $x, y$ in $\Gamma^0$ the set $\caP\Gamma(x, y)$ is the set of paths in $\Gamma$ from $x$ to $y$. Composition is given by concatenation of paths and the identity $\caP\Gamma(x,x)$ is the path of length $0$, i.e., the vertex $x$.
    \end{itemize}
\end{example}

A category $\caC$ is said to be {\it small} if the objects form a set.
For a small category $\caC$, we denote by $\arr(\caC)$ the set of morphisms of $\caC$.
For a small category $\caC$, one may define an underlying graph $\Gamma_\caC$ given by $\Gamma_\caC^0=\ob(\caC)$ and $\Gamma_\caC^1=\arr(\caC)$.

Let $\caC$ and $\caD$ be two categories. We say that $\caD$ is a {\it subcategory} of $\caC$ if
\begin{itemize}
    \item[(i)] $\ob(\caD)\subseteq\ob(\caC)$;
    \item[(ii)] for each $x,y$ in $\ob(\caD)$ one has $\caD(x,y)\subseteq\caC(x,y)$;
    \item[(iii)] composition of morphisms in $\caD$ is the same as that for $\caC$;
    \item[(iv)] for each $x\in\ob(\caD)$ the identity in $\caD(x,x)$ is the identity in $\caC(x,x)$.
\end{itemize}
The subcategory $\caD$ of $\caC$ is said to be {\it full} if 
$\caD(x,y)=\caC(x,y)$ for all $x,y$ in $\ob(\caD)$; and it is said to be {\it wide} if $\ob(\caD)=\ob(\caC)$.\\

    Let $\caC$ and $\caD$ be two categories.
    A {\it functor} $F\colon\caC\to\caD$ assigns to each object $x$ of $\caC$ an object $F(x)$ of $\caD$ and to each morphism $g\colon x\to y$ a morphism $F(g)\colon F(x)\to F(y)$ in $\caD$ such that the following hold:
    \begin{itemize}
        \item $F(1_x)=1_{F(x)}$ for each $x$ in $\ob(\caC)$;
        \item $F(gh)=F(g)F(h)$ whenever $gh$ is defined.
    \end{itemize}		
Clearly for any category $\caC$ one has an identity functor $1_\caC\colon\caC\to\caC$ which is defined to be the identity map on objects and arrows.

A {\it natural homomorphism} $\alpha\colon F\to G$ of functors $F,G\colon \caC\to\caD$ is a family of morphisms $\alpha_x\colon F(x)\to G(x)$, $x$ in $\ob(\caC)$, such that the diagram commutes for all $\beta\in\caC(x,y)$:
\begin{center}
    \begin{tikzcd}[column sep=normal, row sep=large]
			& F(x) \ar[r, "\alpha_x"] \ar[d, swap, "F(\beta)"]
			& G(x) \ar[d,"G(\beta)"]\\
			& F(y) \ar[r, "\alpha_y"]
			& G(y)
    \end{tikzcd}
\end{center}

A functor $F\colon\caC\to\caD$ is an {\it equivalence} if there exists a functor 
$G\colon\caD\to\caC$ such that $GF$ and $FG$ are naturally isomorphic to the identity functors $1_\caC$ and $1_\caD$, respectively. If such an equivalence exists, we say that the categories $\caC$ and $\caD$ are {\it equivalent}.

\begin{defi}
    A category whose objects form a set and in which every morphism is  an isomorphism is called a {\it groupoid}.
\end{defi}

Equivalently, a groupoid $\caG$ is a small category in which all the morphisms are invertible.
A groupoid can also be viewed as a generalization of a group 
which has a partially-defined product. 
We denote by $\caG^{(0)}$ the set of objects and call it the  {\it unit space} of $\caG$, and by $\caG^{(1)}$ the set of morphisms of $\caG$.
We call {\it units} the elements of $\caG^{(0)}$.
We denote by  $r,s\colon\caG\to\caG^{(0)}$ the {\it range} and {\it source} maps, respectively. 
We call $\caG^{(2)}=\{\,(g_1,g_2)\in\caG\times\caG\mid s(g_1)=r(g_2)\,\}$ the set of {\it composable pairs}.
Note that $\caG^{(0)}=s(\caG)=r(\caG)$ and the units of $\caG$ are the identity morphisms of the category $\caG$, 
in the sense that $gs(g)=g=r(g)g$ for all $g\in\caG$.

A groupoid $\caG$ is said to be {\it connected}
if the map $\caG^{(1)}\to\caG^{(0)}\times\caG^{(0)}$, $g\mapsto(r(g),s(g))$ is surjective.\\

For $x\in\caG^{(0)}$, we say that $\caG_x^x=\{g\in\caG\mid s(g)=x=r(g)\}$ is the {\it isotropy group} of $\caG$ at $x$.
We call $\mathrm{Iso}(\caG)=\bigcup_{x\in\caG^{(0)}} \caG_x^x =\{g\in\caG\mid s(g)=r(g)\}$ the {\it isotropy} of $\caG$.
By defining 
\begin{align}
    \mathrm{Iso}(\caG)^{(0)}&=\caG^{(0)}\notag\\
    \mathrm{Iso}(\caG)(x,y)&=
    \begin{cases}
        \mathrm{Iso}(\caG)(x,y) &\mbox{ if }x=y\\
        \hfil\emptyset\hfil &\mbox{ if }x\ne y\\
    \end{cases}\notag
\end{align} 
for $x,y\in\caG^{(0)}$,
one verifies easily
that the isotropy is a subgroupoid of $\caG$.\\

A {\it homomorphism of groupoids} $f\colon\caG\to\caH$ is essentially a functor, i.e., it consists of a pair of functions 
$f^{(0)}\colon\caG^{(0)}\to \caH^{(0)}$,
$f^{(1)}\colon\caG^{(1)}\to\caH^{(1)}$,  preserving all the structure. 
Thus groupoids and their homomorphisms form a category $\Grpd$.

Let $\caG$ be a groupoid. A {\it subgroupoid} of $\caG$ is a subcategory $\caH$ of $\caG$ such that if $h\in\caH$, then $h^{-1}\in\caH$; that is, $\caH$ is a subcategory which is also a groupoid.
We say $\caH$ is {\it full (wide)} if $\caH$ is a {\it full (wide)} subcategory of $\caG$.

Let $f\colon\caG\to\caH$ be a homomorphism of groupoids.
We define the {\it kernel} and the {\it image} of $f$ to be the subsets
$\ker f=\{\, g\in\caG\mid f(g)\in\caH^{(0)}\,\}$ and 
$\image\, f=\{\,f(g)\mid g\in\caG\,\}$, respectively.
Notice that if $f\colon\caG\to\caH$ is a homomorphism of
groupoids then $\image f$ is not usually a subgroupoid of $\caH$.

The following proposition will be useful for our purpose.
\begin{prop}[\cite{stachura}, Proposition 9]
\label{prop:homoinjker}
    A homomorphism of groupoids $\phi\colon\caG\to\caH$ is injective if and only if $\ker\phi=\caG^{(0)}$.
\end{prop}

\begin{example}
 \begin{itemize}
     \item[(i)] A group, regarded as a category with one object, is also a groupoid.
     \item[(ii)] The category of reduced paths in a tree is a groupoid.
 \end{itemize}
\end{example}

\begin{rem}
\label{rem:forgetfulfunctor}
    An important class of functors are {\it forgetful functors},
    a general term that is used for any functor that forgets structure.
    For example, $U\colon\Grp\to\Set$ sends a group to its underlying set and a group
    homomorphism to its underlying function.
    We will use the forgetful functor $F\colon\Grpd\to\Graphs$ which sends a groupoid $\caG$ to its underlying graph $\Gamma_\caG$ and a groupoid homomorphism $f=(f^{(0)},f^{(1)})$ to its underlying graph homomorphism.
\end{rem}

\section{Groupoid actions}
\label{s:gpdactions}
The notion of a groupoid action on a space is a straightforward generalization of
group actions. 
Much of this section is inspired by \cite{deaconu} and \cite{kummuh}.

\begin{defi}
	\label{def:gpdactionset}
	A groupoid $\caG$ is said to {\it act (on the left)} on a set $X$ 
	if there are given a surjective map $\varphi\colon X\to\caG^{(0)}$,
	called the {\it momentum},
	and a map $\caG\circledast X\to X$, $(g,x)\mapsto gx$,
	where
	$\caG\circledast X=\{(g,x)\in \caG\times X \mid s(g)=\varphi(x) \}$ is called the {\it fibered product} of $\caG$ on $X$,
	that satisfy
	\begin{itemize}
		\item[(A1)] $\varphi(g x)=r(g)$ for all $(g,x)\in \caG\circledast X$;
		\item[(A2)] $(g_1,g_2)\in\caG^{(2)}, (g_2,x)\in\caG\circledast X$ implies
		$(g_1 g_2,x), (g_1,g_2 x)\in\caG\circledast X$ and
		\begin{equation*}
		g_1(g_2 x)=(g_1g_2)x;
		\end{equation*}
		\item[(A3)] $\varphi(x) x =x$ for all $x\in X$.
	\end{itemize}
	We say that $X$ is a {\it left $\caG$-set}.
	A left $\caG$-set $X$ is said to be {\it transitive} if given $x,y\in X$ there exists $g\in\caG$
	such that $g x=y$.	
	The action is said to be {\it free} if $gx=x$ for some $x$ implies $g=\varphi(x)\in\caG^{(0)}$.
	The set of orbits $\caG\ast x=\{ gx\mid g\in\caG, s(g)=\varphi(x) \}$ is denoted by $\caG\backslash X$.
\end{defi}

\begin{rem}
\label{rem:actiongpd}
The fibered product $\caG\circledast X$ has a natural structure of groupoid, called the {\it action groupoid} or {\it semi-direct product} and denoted by $\caG\ltimes X$, where source and range maps are given by 
\begin{align}
    s(g,x)&=(s(g),x)=(\varphi(x),x)\notag\\
    r(g,x)&=(r(g),g\cdot x)=(\varphi(g x),gx),\notag
\end{align}
the set of composable pairs is given by 
\begin{equation*}
    (\caG\ltimes X)^{(2)}=\{\, \big((g_1,x_1),(g_2,x_2)\big)\mid x_1=g_2 x_2\,\},
\end{equation*}
with operations
\begin{align}
    (g_1, g_2x_2)(g_2, x_2)&=(g_1 g_2,x_2),\notag\\
    (g,x)^{-1}&=(g^{-1},g x),\notag
\end{align}
and the unit space $(\caG\ltimes X)^{(0)}$ may be identified with $X$ via the map
\begin{equation*}
    i\colon X\to\caG\ltimes X,\quad i(x)=(\varphi(x),x).
\end{equation*}
\end{rem}

\begin{defi}
Let $\caG$ be a groupoid and let $X$ and $X'$ be left $\caG$-sets.
A {\it morphism} of left $\caG$-sets is a map $F\colon X\to X'$ such that the following diagrams commute
\begin{center}
    \begin{tikzcd}[column sep=normal, row sep=large]
	& X \ar[rr, "F"] \ar[dr, "\varphi"]
	& 
	& X' \ar[dl,"\varphi'"]
	&
	&
	& \caG\ast X\ar[rr] \ar[d, "\iid\times F"]
	&
	& X\ar[d, "F"]
	&[3.0em]\\
	& 
	& \caG^{(0)}
	&
	&
	&
	&\caG\ast X'\ar[rr]
	&
	&X'
    \end{tikzcd}
\end{center}
\end{defi}

We now introduce the notions of groupoid cosets and of conjugation between subgroupoids.

\begin{defi}
\label{def:gpdcosets}
    Let $\caH$ be a wide subgroupoid of a groupoid $\caG$. 
    We define a relation $\sim_\caH$ on $\caG$ by 
    \begin{equation}
	\label{eq:relcosets}
	g_1\sim_\caH g_2\,\Longleftrightarrow\, \mbox{ there exists } h\in\caH :\,g_1=g_2 h.
    \end{equation}
\end{defi} 
It is straightforward to verify that $\sim_\caH$ is an equivalence relation.

\begin{defi}
	The equivalence classes of $\sim_\caH$ are called {\it cosets} and denoted by
	\begin{equation*}
	g\caH=\{\, gh\mid h\in\caH, s(g)=r(h)\,\},
	\end{equation*}
	and the set of cosets is denoted by $\caG \backslash \caH=\{\,g\caH\mid g\in\caG\,\}$.
	As in group theory, one may choose
	a set of coset representatives called a {\it transversal}.
\end{defi}

\begin{prop}
Let $\caH$ be a wide subgroupoid of a groupoid $\caG$. 
For $g_1,g_2\in\caG$, one has that $g_1\in g_2\caH$ if and only if $g_2\in g_1\caH$.
When either condition holds, $g_1\caH=g_2\caH$.
\end{prop}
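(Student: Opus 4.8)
The plan is to obtain the first assertion directly from the closure of $\caH$ under inverses, and then to deduce the equality of cosets from closure under composition. Throughout I will use that $\caH$ is wide, so that $\caH^{(0)}=\caG^{(0)}$, together with the standard groupoid identities $g\,s(g)=g$, $r(g)\,g=g$, and, for $h\in\caH$, $hh^{-1}=r(h)$ and $h^{-1}h=s(h)$.

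For the biconditional, suppose $g_1\in g_2\caH$, so $g_1=g_2h$ for some $h\in\caH$ with $r(h)=s(g_2)$. Since $\caH$ is a groupoid, $h^{-1}\in\caH$. First I would record the composability bookkeeping: from $g_1=g_2h$ one reads off $s(g_1)=s(h)=r(h^{-1})$, so the product $g_1h^{-1}$ is defined. Then
\[
g_1 h^{-1} = (g_2 h) h^{-1} = g_2 (h h^{-1}) = g_2\, r(h) = g_2\, s(g_2) = g_2,
\]
which shows $g_2 = g_1 h^{-1}\in g_1\caH$. The reverse implication is identical after interchanging the roles of $g_1$ and $g_2$, so $g_1\in g_2\caH \Leftrightarrow g_2\in g_1\caH$.

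For the equality $g_1\caH=g_2\caH$, I would argue by double inclusion. Keeping $g_1=g_2h$ with $h\in\caH$ as above, take an arbitrary $x\in g_1\caH$, say $x=g_1k$ with $k\in\caH$ and $r(k)=s(g_1)$. Because $s(h)=s(g_1)=r(k)$, the product $hk$ is defined and lies in $\caH$ by closure under composition, and $x=g_2(hk)\in g_2\caH$; hence $g_1\caH\subseteq g_2\caH$. Using the first part to get $g_2\in g_1\caH$ and running the same argument with the roles reversed gives $g_2\caH\subseteq g_1\caH$, whence equality.

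I expect no genuine obstacle: the content is entirely formal, amounting to closure of $\caH$ under inverses and products plus the unit identities. The only point requiring care — and the sole place where the groupoid setting differs from the classical group computation — is checking at every step that the relevant source and range conditions hold, so that each product written down is in fact a legitimate composable pair in $\caG$.
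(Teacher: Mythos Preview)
Your proof is correct and follows essentially the same approach as the paper's: both establish the biconditional via closure of $\caH$ under inverses (writing $g_2=g_1h^{-1}$) and then prove the coset equality by double inclusion using closure under composition. Your version is slightly more explicit about the source/range bookkeeping, but the argument is otherwise identical.
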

\begin{proof}
Since $g_1$ and $g_2$ are arbitrary, it sufficies to prove one implication.
Let $g_1\in g_2\caH$. Then there exists $h\in\caH$ such that $g_1=g_2 h$.
Thus, $g_2=g_1 h^{-1}\in g_1\caH$. 

%Viceversa, if $g_2\in g_1\caH$, then there exists $k\in\caH$ such that $g_2=g_1 k$.
%Then one has that $g_1=g_2 k^{-1}\in g_2\caH$.
Let $a=g_1 l\in g_1\caH$. Then $a=g_2 h l\in g_2\caH$, since $r(l)=s(g_1)=s(h)$.
Hence, one has that $g_1\caH\subseteq g_2\caH$.
Similarly, one proves that $g_2\caH\subseteq g_1\caH$.
%Similarly, for $b=g_2 m\in g_2\caH$ one has that $b= g_1 k m\in g_1\caH$,
%since $r(m)=s(g_2)=s(k)$.
Thus, $g_1\caH=g_2\caH$.
\end{proof}

One has the following straightforward proposition.
\begin{prop}
    Let $\caH$ be a wide subgroupoid of a groupoid $\caG$. 
    Then $\caG\backslash\caH$ is a left $\caG$-set with momentum map $\varsigma\colon\caG\backslash\caH\to\caG^{(0)}$ given by 
    \begin{equation*}
	\varsigma(g\caH)=r(g),
    \end{equation*}
    and the action 
    $\caG\circledast\caG\backslash\caH\to\caG\backslash\caH$ defined by \begin{equation*}
	(g_1,g_2\caH)\mapsto g_1g_2\caH.
    \end{equation*}
\end{prop}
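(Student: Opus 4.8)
The plan is to check directly that the data $(\varsigma,\ (g_1,g_2\caH)\mapsto g_1g_2\caH)$ satisfy the requirements of Definition \ref{def:gpdactionset}. This splits into three tasks: (i) that $\varsigma$ is a well-defined surjection onto $\caG^{(0)}$; (ii) that the proposed action map is well-defined on $\caG\ast(\caG\backslash\caH)$; and (iii) that axioms (A1)--(A3) hold. Throughout, the governing fact is that two representatives of the same coset differ by right multiplication by an element of $\caH$.

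First I would treat $\varsigma$. If $g_1\caH=g_2\caH$, then by the relation \eqref{eq:relcosets} there is $h\in\caH$ with $g_1=g_2h$, whence $r(g_1)=r(g_2h)=r(g_2)$; so $\varsigma$ does not depend on the chosen representative. For surjectivity I would invoke that $\caH$ is \emph{wide}, i.e. $\caH^{(0)}=\caG^{(0)}$: given any unit $u\in\caG^{(0)}$, the element $u$ lies in $\caG$, the coset $u\caH$ is defined, and $\varsigma(u\caH)=r(u)=u$, so every unit is attained.

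Next I would check that the action is well defined. Given a composable pair $(g_1,g_2\caH)\in\caG\ast(\caG\backslash\caH)$, one has $s(g_1)=\varsigma(g_2\caH)=r(g_2)$, so $(g_1,g_2)\in\caG^{(2)}$ and the product $g_1g_2$ exists, making $g_1g_2\caH$ meaningful. For independence of the representative, suppose $g_2\caH=g_2'\caH$, say $g_2=g_2'h$ with $h\in\caH$ and $s(g_2')=r(h)$. Then $g_1g_2=g_1(g_2'h)=(g_1g_2')h$ with $s(g_1g_2')=s(g_2')=r(h)$, so $g_1g_2\in g_1g_2'\caH$ and hence $g_1g_2\caH=g_1g_2'\caH$. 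This is the step where I expect the only genuine care to be needed, since one must track the source--range conditions to ensure that every product written down is actually defined.

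Finally, axioms (A1)--(A3) follow formally. For (A1), $\varsigma(g_1\cdot g_2\caH)=\varsigma(g_1g_2\caH)=r(g_1g_2)=r(g_1)$. For (A2), if $(g_1,g_2)\in\caG^{(2)}$ and $(g_2,g_3\caH)\in\caG\ast(\caG\backslash\caH)$, then both $g_1\bigl(g_2(g_3\caH)\bigr)=g_1g_2g_3\caH$ and $(g_1g_2)(g_3\caH)=g_1g_2g_3\caH$ by associativity of the groupoid product, while the required composabilities are inherited from $\caG^{(2)}$. For (A3), $\varsigma(g\caH)\cdot g\caH=r(g)\cdot g\caH=(r(g)g)\caH=g\caH$, using $r(g)g=g$. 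This completes the verification.
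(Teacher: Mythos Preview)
Your proposal is correct and follows exactly the approach the paper indicates: the paper's own proof consists of the single line ``One verifies easily that (A1)--(A3) of Definition~\ref{def:gpdactionset} are satisfied,'' and what you have written is precisely that verification carried out in full, including the well-definedness checks for $\varsigma$ and the action map that the paper leaves implicit.
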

\begin{proof}
One verifies easily that (A1)-(A3) of Definition \ref{def:gpdactionset} are satisfied.
\end{proof}

\begin{rem}
    In \cite{spinosa} the right cosets are characterized by the stabilizer subgroupoid,   as in the classical case of groups. 
    That is, given a groupoid $\caG$ acting on the left on a set $X$, for $x\in X$ one has an isomorphism of left $\caG$-sets $\caG\backslash\Stab_\caG(x)\to\caG \ast x$, 
    $g\Stab_\caG(x)\mapsto gx$.
\end{rem}

The concept of conjugation for subgroupoids of a given groupoid is rather recent and unexplored.
The following definition is taken from \cite{burnsidegpd}.

\begin{defi}
    \label{def:conj}
    Let $\caG$ be a groupoid and let $\caK$, $\caH$ be subgroupoids of $\caG$.
    We say that $\caK$ and $\caH$ are {\it conjugated } (or {\it conjugally equivalent })
    if there are a functor $F\colon \caK \to \caH$ which is an equivalence of categories and 
    a family $\{ g_x\}_{x\in\caK^{(0)}}\subseteq\caG$ such that
    \begin{itemize}
        \item[(i)] $g_x\in \caG(F(x), x)$ for all $x\in\caK^{(0)}$;
        \item[(ii)] for all $k\in\caK(x_2,x_1)$ one has that $F(k)=g_{x_2}\, k\, g_{x_1}^{-1} \in\caH\big(F(x_2),F(x_1)\big)$.
    \end{itemize}
\end{defi}

\begin{rem}
Equivalently, as shown in \cite{burnsidegpd},
two subgroupoids $\caK$ and $\caH$ of $\caG$ with monomorphisms $\tau_\caK\colon\caK\to\caG$ and 
$\tau_\caH\colon\caH\to\caG$ 
are conjugated if there is an equivalence
$F\colon\caK\to\caH$ between their underlying categories and a natural transformation $\mathfrak{g}\colon\tau_\caH\circ F\to\tau_\caK$. 
It follows that $\mathfrak{g}$ is actually a natural isomorphism.
It is also shown in \cite{burnsidegpd} that the conjugacy relation is
reflexive, symmetric and also transitive, i.e., it is an equivalence relation on the set of all subgroupoids of a given groupoid.
%\begin{center}
%    \begin{tikzpicture}[]
%	\node[style=circle,inner sep=1.5pt,
%       minimum size=1.5mm,label] (K) at (-1.5,0) {$\caK$};
%	\node[style=circle,inner sep=1.5pt,
 %       minimum size=1.2mm,label] (H) at (1.5,0) {$\caH$};
 %       \node[style=circle,inner sep=1.5pt, minimum size=1.5mm,label] (G) at (0,-2) {$\caG$};

  %  \path[->] (K) edge [] node[scale=0.8,above] {$F$} (H);
  %  \path[->] (K) edge [] node[scale=0.8,left] {$\tau_\caK$} (G);
   % \path[->] (H) edge [] node[scale=0.8,right] {$\tau_\caH$} (G);
    %\path[->] (1,-0.4) edge [] node[scale=0.8,above] {$\mathfrak{g}$} (-0.7,-1);     
    %\end{tikzpicture}
%\end{center}
\end{rem}
 
In contrast with the classical group setting, conjugated subgroupoids are not
necessarily isomorphic. In fact, we only know that conjugated subgroupoids
have equivalent underlying categories (see \cite{burnsidegpd}, Example 4.9).

We will use conjugated subgroupoids such that the functor $F$ is an injective equivalence of categories. In particular, one has the following lemma.

\begin{lem}
	\label{lem:injconj}
	Let $\caG$ be a groupoid and let $\caK$, $\caH$ be conjugated subgroupoids of $\caG$.
	Let $i_{g}\colon\caK\to\caH$ denote the conjugation map, i.e., 
	$i_g(k)=g_{r(\gamma)}\, k\, g_{s(\gamma)}^{-1}$.
	If the equivalence of categories $F\colon\caK\to\caH$ is injective, then $i_g$ is an injective
	homomorphism of groupoids.
\end{lem}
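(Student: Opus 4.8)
The plan is to show that $i_g$ is a groupoid homomorphism and then invoke Remark \ref{prop:homoker} to upgrade injectivity. First I would unwind the definition: given $k\in\caK(x_2,x_1)$, conjugation sends $k$ to $g_{x_2}\,k\,g_{x_1}^{-1}$, which by Definition \ref{def:conj}(ii) equals $F(k)\in\caH(F(x_2),F(x_1))$. So as a map of sets $i_g$ coincides with $F$, and since $F$ is an equivalence of categories (in particular a functor), $i_g$ is automatically compatible with composition: for composable $k_1\in\caK(x_2,x_1)$ and $k_2\in\caK(x_3,x_2)$ one has $i_g(k_1 k_2)=F(k_1 k_2)=F(k_1)F(k_2)=i_g(k_1)i_g(k_2)$. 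I would also check directly from the formula that the telescoping $g_{x_2}\,k_1\,g_{x_1}^{-1}\cdot g_{x_1}\,k_2'\,\dots$ cancels correctly, which confirms that $i_g$ respects the partial product; this is the routine verification that $i_g$ is a groupoid homomorphism.

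The more delicate point is strongness. To apply Remark \ref{prop:homoker}, which says that a \emph{strong} homomorphism $\phi$ is injective exactly when $\ker\phi=\caG^{(0)}$, I would first argue that $i_g$ is strong, i.e., that whenever $(i_g(\alpha),i_g(\beta))=(F(\alpha),F(\beta))$ is composable in $\caH$, the pair $(\alpha,\beta)$ is already composable in $\caK$. Here the hypothesis that $F$ is an injective equivalence of categories is crucial: composability of $F(\alpha)$ and $F(\beta)$ means $s(F(\alpha))=r(F(\beta))$, i.e. $F(x_1)=F(x_1')$ where $x_1=s(\alpha)$ and $x_1'=r(\beta)$ in $\caK^{(0)}$; injectivity of $F$ on objects (which follows from $F$ being an injective functor) forces $x_1=x_1'$, hence $s(\alpha)=r(\beta)$ and $(\alpha,\beta)\in\caK^{(2)}$. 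This is the step I expect to be the main obstacle, since it hinges on extracting injectivity on objects from the phrase \lq\lq injective equivalence of categories\rq\rq\ and on the precise behaviour of the range and source maps under $F$.

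Finally I would compute the kernel. By definition $\ker i_g=\{k\in\caK\mid i_g(k)\in\caH^{(0)}\}=\{k\in\caK\mid F(k)\in\caH^{(0)}\}$. If $F(k)$ is a unit of $\caH$, say $F(k)=1_{F(x)}=F(1_x)$ for the appropriate object $x$, then injectivity of $F$ on arrows gives $k=1_x\in\caK^{(0)}$. Thus $\ker i_g\subseteq\caK^{(0)}$, and the reverse inclusion is immediate since $F(1_x)=1_{F(x)}$ is always a unit. Having established that $i_g$ is a strong homomorphism with $\ker i_g=\caK^{(0)}$, Remark \ref{prop:homoker} yields that $i_g$ is injective, completing the proof. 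Throughout, the only nontrivial inputs are the functoriality of $F$ and its injectivity on both objects and arrows; everything else is a direct translation of the conjugation formula.
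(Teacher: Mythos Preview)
Your argument is correct, but it takes a different route from the paper's proof, and in fact you walk past the shortest path. Your opening observation that $i_g$ coincides with $F$ as a map of arrows (by Definition~\ref{def:conj}(ii)) is sharper than anything in the paper: once you have $i_g=F$, the hypothesis that $F$ is injective gives injectivity of $i_g$ immediately, and the detour through strongness, $\ker i_g=\caK^{(0)}$, and Remark~\ref{prop:homoker} is unnecessary. That detour is not wrong---your verification of strongness and of the kernel are both fine---it is just longer than needed given what you already established.

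The paper, by contrast, never makes the identification $i_g=F$ explicit. It checks the homomorphism property by the same telescoping you mention, and then proves injectivity by a bare-hands computation: assuming $i_g(h)=i_g(h')$, it compares ranges and sources to get $F(r(h))=F(r(h'))$ and $F(s(h))=F(s(h'))$, uses injectivity of $F$ \emph{on objects} to deduce $r(h)=r(h')$ and $s(h)=s(h')$, and then cancels the conjugating elements to conclude $h=h'$. So the paper's proof actually only needs injectivity of $F$ on objects (faithfulness being automatic for an equivalence), whereas your kernel argument invokes injectivity on arrows as well; both are available under the hypothesis, but it is worth noting that the paper's direct approach isolates the weaker ingredient.
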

\begin{proof}
	We first prove that $i_g$ is a groupoid homomorphism.
	For $x\in\caK^{(0)}$, one has that 
	\begin{equation}
	i_g(x)=g_x x g_x^{-1}= g_xg_x^{-1}=F(x)\in\caH^{(0)}.
	\end{equation}
	Moreover, for $k,l\in\caK$ such that $s(k)=r(l)$, one has
	\begin{equation}
	i_g(kl)= g_{r(k)}\,k\,l\,g_{s(l)}^{-1}= g_{r(k)}\,k\, g_{s(k)}^{-1} \,g_{r(l)}\,l\,g_{s(l)}^{-1}= i_g(k)\,i_g(l).
	\end{equation}
	Hence, $i_g$ is a groupoid homomorphism.
	Suppose that there exist $h,h'\in\caH$ such that $i_g(h)=i_g(h')$. Then one has
	\begin{equation}
	g_{r(h)}\,h\,g_{s(h)}^{-1}=g_{r(h')}\,h'\,g_{s(h')}^{-1},
	\end{equation}
	which implies that 
	\begin{equation}
	r(g_{r(h)})=r(g_{r(h')})\quad\mbox{ and }\quad s(g_{s(h)}^{-1})=s(g_{s(h')}^{-1}).
	\end{equation}
	Hence, by (i) of Definition \ref{def:conj}, one has that
	\begin{equation}
	F(r(h))=F(r(h'))\quad\mbox{ and }\quad F(s(h))=F(s(h')).
	\end{equation}
	Since $F$ is injective, this implies that $r(h)=r(h')$ and $s(h)=s(h')$.
	Then one has that
	\begin{equation}
	\begin{aligned}
	h&=g_{r(h)}^{-1}\,g_{r(h)}\, h\, g_{s(h)}^{-1}\, g_{s(h)}\\
	&=g_{r(h)}^{-1}\,g_{r(h')}\, h'\, g_{s(h')}^{-1}\, g_{s(h)}\\
	&=g_{r(h')}^{-1}\,g_{r(h')}\, h'\, g_{s(h')}^{-1}\, g_{s(h')}\\
	&=h'.
	\end{aligned}
	\end{equation}
	That is, $i_g$ is injective.
\end{proof}

%%%
%%%
\subsection{Groupoids actions on graphs}
\label{ss:gpdactionsgraphs}

\begin{defi}
\label{def:gpdaction}
    Let $\Gamma=(\Gamma^0,\Gamma^1)$ be a graph.
    We say that a groupoid $\caG$ {\it acts on $\Gamma$} if $\caG$ acts on both spaces $\Gamma^0$ and $\Gamma^1$ in a compatible way. 
    This means that there are
    a surjection $\varphi\colon\Gamma^0\to\caG^{(0)}$, called the {\it momentum}, such that 
    \begin{equation}
        \varphi\circ o=\varphi\circ t\colon\Gamma^1\to\caG^{(0)},
    \end{equation}
    and maps
    \begin{equation}
        \mu^0\colon\caG\circledast\Gamma^0\rightarrow\Gamma^0\,\quad\mbox{ and }\quad\,
        \mu^1\colon\caG\circledast\Gamma^1\rightarrow\Gamma^1
    \end{equation}
    which satisfy $\mathrm{(A1)-(A3)}$ and 
    \begin{align}
        o\big(\mu^1(\gamma, e)\big)&=\mu^0(\gamma, o(e)),\\ t\big(\mu^1(\gamma, e)\big)&=\mu^0(\gamma\cdot t(e))
    \end{align}
    for all $(\gamma,e)\in\caG\circledast\Gamma^1$.
    \end{defi}
%Note that since $\varphi$ and $o$ are open maps, $\varphi\circ o$ is open.

\begin{rem}
\label{rem:grunion}
Since $\varphi\circ o=\varphi\circ t$, one has that $\Gamma$ is a union of graphs $x\Gamma$ for $x\in\caG^{(0)}$, where $x\Gamma^0=\varphi^{-1}(x)$, and $x\Gamma^1=t^{-1}\big(\varphi^{-1}(x)\big)$.
\end{rem}

For $x\in\Gamma^0$ and $e\in\Gamma^1$, 
we denote by $\caG\ast x=\{\, \mu^0(g,x)\mid g\in\caG, s(g)=\varphi(x)\,\}$ the orbit of $x$
and by $\caG\ast e=\{\, \mu^1(g,e)\mid g\in\caG, s(g)=\varphi(t(e))\,\}$ the orbit of $e$.

\begin{example}
    Let $\Gamma$ be the graph
    \begin{center}
    \tikzset{every loop/.style={min distance=10mm,looseness=10}}
	\begin{tikzpicture}[->,>=stealth', every loop/.style={}]
	   \node[style=circle,inner sep=0pt, minimum size=1.5mm,label] (v) at (-2,0) {$v$};
	   \node[style=circle,inner sep=0pt, minimum size=1.2mm,label] (w) at (2,0) {$w$};

    \path[->] (v) edge [in=120,out=200,loop,distance=2cm] node[auto] {$a_1$} (v);
    \path[->] (v) edge  [in=-10,out=70,loop,distance=2cm] node[auto] {$a_2$} (v);
    \path[->] (v) edge  [in=-45,out=-125,loop,distance=2cm] node[below] {$a_3$} (v);

    \path[->] (w) edge [in=120,out=200,loop,distance=2cm] node[auto] {$b_1$} (w);
    \path[->] (w) edge  [in=-10,out=70,loop,distance=2cm] node[auto] {$b_2$} (w);
    \path[->] (w) edge  [in=-45,out=-125,loop,distance=2cm] node[below] {$b_3$} (w);
	\end{tikzpicture}
    \end{center}
    and let $\caG$ be the connected groupoid with unit space $\caG^{0}=\{v,w\}$ and isotropy
    $S_3$.
    Since $S_3^v$ and $S_3^w$ are isomorphic groups, there is an isomorphism $\beta\colon S_3^w\to S_3^v$, i.e., 
    $\caG$ is the groupoid with the following generators
    \begin{center}
        \begin{tikzpicture}[->,>=stealth', every loop/.style={}]
            \node[style=circle,inner sep=0pt, minimum size=1.5mm,label] (u1) at (-1,0) {$S_3^v$};
            \node[style=circle,inner sep=0pt, minimum size=1.2mm,label] (u2) at (1,0) {$S_3^w$};
            \path[->]
            (u2) edge node[above] {$f$} (u1);
        \end{tikzpicture}
    \end{center}
    and relations $fgf^{-1}=\beta(g)$ for all $g\in S_3^w$.
    Then $\caG$ acts on $\Gamma$ by permutation, i.e., if we suppose that 
    $S_3^w=\mathrm{Sym}(\{1,2,3\})$, then for $g\in S_3^w$ one has $\mu^1(g,b_i)=b_{g(i)}$,
    $\mu^1(\beta(g), a_i)=a_{g(i)}$
    and $\mu^1(f,b_i)=a_i$ for $i=1,2,3$.
    
\end{example}

\begin{example}
    Let $\Gamma$ be the graph
    \begin{center}
        \begin{tikzpicture}[->,>=stealth', every loop/.style={}]
            \node[style=circle,inner sep=0pt, minimum size=1.5mm,label] (v) at (-1.5,0) {$v_1$};
            \node[style=circle,inner sep=0pt, minimum size=1.2mm,label] (w) at (1.5,0) {$v_2$};
            \path[->]
            (v) edge [in=150,out=30,distance=1.2cm] node[above] {$b$} (w)
            (w) edge [in=-30,out=210,distance=1.2cm] node[below] {$c$} (v); 
            \path[->] (v) edge  [in=140,out=220,loop,distance=1.2cm] node[left] {$a_1$} (v);
            \path[->] (v) edge  [in=130,out=230,loop,distance=3cm] node[left] {$a_2$} (v);
            \path[->] (w) edge  [in=40,out=-40,loop,distance=1.2cm] node[right] {$d_1$} (w);
            \path[->] (w) edge  [in=50,out=-50,loop,distance=3cm] node[right] {$d_2$} (w);    
        \end{tikzpicture}
    \end{center}
    and let $\caG$ be the connected groupoid satisfying the following:
    \begin{itemize}
        \item $\caG^{(0)}=\{u_1,u_2\}$;
        \item $g\in\caG$ is such that $s(g)=u_1$, $r(g)=u_2$ and $gf_1g^{-1}=f_2$;
        \item $\caG_{u_i}^{u_i}=\{u_i,f_i\}\cong\Z/2\Z$, for $i=1,2$. That is, $f_i^2=u_i$.
    \end{itemize}
    %with unit space $\caG^{(0)}=\{u_1,u_2\}$,  and isotropy groups $\caG_{u_i}^{u_i}=\{u_i,f_i\}\cong\Z/2\Z$, for $i=1,2$.
    %That is, $f_i^2=u_i$.
    %Let $g\in\caG$ be such that $s(g)=u_1$, $r(g)=u_2$ and $gf_1g^{-1}=f_2$:
    
%    \begin{center}
%        \begin{tikzpicture}[->,>=stealth', every loop/.style={}]
%            \node[style=circle,inner sep=0pt, minimum size=1.5mm,label] (u1) at (-1,0) {$u_1$};
%            \node[style=circle,inner sep=0pt, minimum size=1.2mm,label] (u2) at (1,0) {$u_2$};
%            \path[->]
%            (u1) edge node[above] {$g$} (u2); 
%            \path[->](u2) edge [in=-30,out=210]node[below] {$g^{-1}$} (u1);
 %           \path[->] (u1) edge  [in=140,out=220,loop,distance=1.2cm] node[left] {$f_1$} (u1);
 %           \path[->] (u2) edge  [in=40,out=-40,loop,distance=1.2cm] node[right] {$f_2$} (u2); 
 %       \end{tikzpicture}
%    \end{center}
    By defining $\varphi(v_i)=u_i$ and 
    \begin{align}
        \mu^0(g,v_1)&= v_2,\notag\\
        \mu^1(g,a_i)&=d_i,\quad i=1,2,\notag\\
        \mu^1(g,c)&=b, \notag\\
        \mu^1(f_1,a_1)&=a_2,\notag\\
        \mu^1(f_2,d_1)&=d_2\notag\\
        \mu^1(f_1,c)&=b,\notag\\
        \mu^1(f_2,b)&=c,\notag
    \end{align}
    we define an action of $\caG$ on $\Gamma$.
\end{example}

\begin{defi}
    Let $\caG$ be a groupoid acting on a graph $\Gamma=(\Gamma^0,\Gamma^1)$. 
    Then we define $Q(\caG,\Gamma)$ by 
    \begin{align}
        Q(\caG,\Gamma)^0=\{\, \caG\ast x \mid x\in\Gamma^0\,\},\\
       Q(\caG,\Gamma)^1=\{\, \caG\ast e \mid e\in\Gamma^1\,\}.
    \end{align}     
    Then one has maps
    \begin{align}
        \label{eq:qo}
	o&\colon Q(\caG,\Gamma)^1\to Q(\caG,\Gamma)^0,
        \quad &&o(\caG\ast e)=\caG\ast o(e),\\
        \label{eq:qt}
	t&\colon Q(\caG,\Gamma)^1\to Q(\caG,\Gamma)^0,
        \quad &&t(\caG\ast e)=\caG\ast t(e),\\
        \label{eq:qbar}
	\,\bar{ }\,&\colon Q(\caG,\Gamma)^1\to Q(\caG,\Gamma)^1,
        \quad && \overline{\caG\ast e}=\caG\ast \bar{e}.
    \end{align}
\end{defi}

\begin{prop}
    The maps $o$, $t$ and $\,\bar{ }\,$ in \eqref{eq:qo}, \eqref{eq:qt} and \eqref{eq:qbar} are well defined.
\end{prop}
\begin{proof}
    Suppose that $\caG\ast e=\caG\ast f$, for $e,f\in\Gamma^1$.
    That is, there exists $g\in\caG$ such that $s(g)=\varphi(o(e))$
    and $\mu^1(g,e)=f$.
    Then one has
    \begin{equation*}
        \mu^0(g,o(e))=o(\mu^1(g,e))=o(f).
    \end{equation*}
    Thus, $o(\caG\ast e)=\caG\ast o(e) =\caG\ast o(f)=o(\caG\ast f)$.
    Similarly, one proves that $t(\caG\ast e)=t(\caG\ast f)$.
    Moreover, since $\mu^1(g,e)=f$ one has that 
    $\mu^0(g,o(\bar{e}))=\mu^0(g,t(e))=t(f)$
    and $\mu^0(g,t(\bar{e}))=\mu^0(g,o(e))=o(f)$.
    Hence $\mu^1(g,\bar{e})=\bar{f}$.
    Thus, one concludes that 
    $\overline{\caG\ast e}=\caG\ast\bar{e}=\caG\ast\bar{f}
    =\overline{\caG\ast f}$.
\end{proof}

\begin{prop}
    For all $e\in\Gamma^1$
    the maps $o$, $t$ and $\,\bar{ }\,$ in \eqref{eq:qo}, \eqref{eq:qt} and \eqref{eq:qbar} satisfy
\begin{itemize}
    \item[(i)] $\overline{\overline{\caG\ast e}}=\caG\ast e$;
    \item[(ii)] $o(\overline{\caG\ast e})=t({\caG\ast e})$ and $t(\overline{\caG\ast e})=o(\caG\ast e)$.
\end{itemize}   
\end{prop}
\begin{proof}
    Let $e\in\Gamma^1$. Then one has
    \begin{itemize}
        \item[(i)] $\overline{\overline{\caG\ast e}}
                =\overline{\caG\ast \bar{e}}
                =\caG\ast \bar{\bar{e}}
                =\caG\ast e$;
        \item[(ii)] $o(\overline{\caG\ast e})
                =o(\caG\ast\bar{e})
                =\caG\ast o(\bar{e})
                =\caG\ast t(e)
                =t({\caG\ast e})$ and\\
                $t(\overline{\caG\ast e})
                =t(\caG\ast\bar{e})
                =\caG\ast t(\bar{e})
                =\caG\ast o(e)
                =o({\caG\ast e})$.
    \end{itemize}
\end{proof}

However, $\overline{\caG\ast e}\ne \caG\ast e$ is not necessarily satisfied, as showed in the following example.
\begin{example}
    Let $\Gamma$ be the graph
    \begin{center}
        \begin{tikzpicture}[->,>=stealth', every loop/.style={}]
            \node[style=circle,inner sep=0pt, minimum size=1.5mm,label] (v) at (0,0) {$v$};
            \path[->] (v) edge  [in=60,out=-30,loop,distance=1cm] node[right] {$e$} (v);
        \end{tikzpicture}
    \end{center}
    and let $\caG$ be such that 
    $\caG^{0}=\{x\}$ and $\caG^{(1)}=\{x,g,g^{-1}\}$.
    Then $\caG$ acts on $\Gamma$ by defining
    $\varphi(v)=x$, $\mu^0(x,v)=v$ and $\mu^1(g,e)=\bar{e}$.
\end{example}

\begin{defi}
    Let $\caG$ be a groupoid acting on a graph $\Gamma$. 
    We say that $\caG$ {\it is acting without inversion of edges} if for all $(\gamma,e)\in\caG\circledast\Gamma^1$ one has 
    $\mu^1(\gamma,e)\ne \bar{e}.$
\end{defi}

Then, for a groupoid $\caG$ acting on a graph $\Gamma$, 
one has that $Q(\caG,\Gamma)$ is a graph if and only if $\caG$ is acting on $\Gamma$ without inversion of edges. 

\begin{defi}
Let $\caG$ be a groupoid acting without inversion on a graph $\Gamma$. We say that $\Gamma$ is a {\it $\caG$-forest} if $x\Gamma$ is a tree for every $x\in\caG^{(0)}$.
\end{defi}

%%%%%-------------------------------------------

\subsection{Cayley graphs}

\begin{defi}
    Let $\caG$ be a groupoid. A subset $S$ of $\caG$ 
    is said to be an {\it admissible system} for $\caG$ if it does not contain any unit of $\caG$ and 
    if $S=S^{-1}$, where $S^{-1}=\{s^{-1}\mid s\in S\}$.
    We say that an admissible system $S$ is a {\it generating system} if the groupoid generated by $S$ coincides with $\caG$.
\end{defi}

\begin{defi}
    Given a groupoid $\caG$ and a generating system $S$ of $\caG$, the {\it Cayley graph} $\Gamma(\caG,S)$ of $(\caG,S)$ is the graph defined by 
    \begin{align}
	\Gamma(\caG,S)^0&= \caG \tag{i}\\
	\Gamma(\caG, S)^1&=\{\,(gs,g)\mid (g,s)\in \caG\circledast S\,\}\tag{ii}
	\end{align}
	where 
	%	$gs=\mu(g,s)$ and 
	the maps $t$, $o$ and $\,\bar{ }$ are given by
	\begin{equation}
		t\big((gs,g)\big)= gs,\quad
		o\big((gs,g)\big)=g,\quad
		\overline{(gs,g)}=(g,gs).
	\end{equation}
\end{defi}
Since for any unit $x\in\caG^{(0)}$ one has $x\notin S$, one has that
$\Gamma(\caG,S)^1\subseteq \caG\circledast\caG\setminus\Delta(\caG)$,
where $\Delta(\caG)=\{\,(g,g)\mid g\in\caG\,\}$.

\begin{rem}
    Note that the Cayley graph $\Gamma(\caG,S)$ is fibered on $\caG^{(0)}$ via the map $\varphi=r\colon\Gamma(\caG,\caS)^0=\caG\to\caG^{(0)}$.
    For $x$ in $\caG^{(0)}$ we denote by $\Gamma(\caG, S)_x$ the fiber of $x$, i.e.,
	\begin{align}
	   \Gamma(\caG, S)_x^0&=\varphi^{-1}(x),\notag\\ 
	   \Gamma(\caG, S)_x^1&=o^{-1}(\varphi^{-1}(x)).\notag
	\end{align}
   % Since for $(gs,s)\in\Gamma(\caG, S)_x^1$ and $(hr,r)\in\Gamma(\caG, S)_y^1$
   % one has that $\varphi(o(gs,s))=\varphi(t(gs,s))=x$ and 
   % $\varphi(o(hr,r))=\varphi(t(hr,r))=y$, 
   % one has that $\Gamma(\caG, S)_x$ and $\Gamma(\caG, S)_y$ 
   % are disjoint for any $x\ne y$.
\end{rem}

\begin{prop}
\label{prop:cayley}
Let $\caG$ be a groupoid and $S\subseteq\caG$ be an admissible system.
Then the following are equivalent
    \begin{itemize}
	\item[(i)] $S$ is a generating system;
	\item[(ii)] $\Gamma(\caG,S)_x$ is connected for any $x\in\caG^{(0)}$.
    \end{itemize}
\end{prop}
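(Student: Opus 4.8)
The plan is to prove the two implications through a single bookkeeping lemma that translates paths in the fibre $\Gamma(\caG,\caS)_x$ into products of elements of $\caS$, mirroring the classical fact that a Cayley graph is connected exactly when the chosen set generates the group. First I would record the shape of a fibre: by the preceding remark the vertex set of $\Gamma(\caG,\caS)_x$ is $\{g\in\caG\mid r(g)=x\}$, every edge $(gs,g)$ has $o=g$ and $t=gs$, and since $r(gs)=r(g)$ both endpoints lie in the same fibre. In particular the unit $x$, viewed as an element of $\caG$, is always a vertex of $\Gamma(\caG,\caS)_x$ because $r(x)=x$.

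The heart of the argument is the following claim: for $g,h$ with $r(g)=r(h)=x$, the vertices $g$ and $h$ lie in the same connected component of $\Gamma(\caG,\caS)_x$ if and only if $g^{-1}h$ is either a unit or a product $s_1\cdots s_n$ of composable elements of $\caS$. To prove the ``if'' direction I would, given $g^{-1}h=s_1\cdots s_n$, set $g_0=g$ and $g_i=g_{i-1}s_i$, observe that $g_n=g(g^{-1}h)=h$ and that each consecutive pair is joined by the edge $(g_{i-1}s_i,g_{i-1})$, all of whose vertices have range $x$; this exhibits a walk from $g$ to $h$, which reduces to an element of $\caP_{g,h}$. For the ``only if'' direction I would read a reduced path from $g$ to $h$ as a sequence of vertices in which consecutive terms differ by right multiplication by an element of $\caS$ (each edge, in either orientation, joins a vertex $a$ to $as$ with $s\in\caS=\caS^{-1}$), so that $h=g\,s_1\cdots s_n$ and hence $g^{-1}h=s_1\cdots s_n$; a length-zero path gives $g=h$ and $g^{-1}h=s(g)$.

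With the claim in hand both implications are immediate. For (i)$\Rightarrow$(ii), if $\caS$ generates $\caG$ then for any two vertices $g,h$ of a fibre the element $g^{-1}h$ is a product of elements of $\caS$ (or a unit), so the claim makes them connected; since this holds for every pair, $\Gamma(\caG,\caS)_x$ is connected. For (ii)$\Rightarrow$(i), I would fix $g\in\caG$, put $x=r(g)$, and apply the claim to the pair $x,g$ of vertices of the connected fibre $\Gamma(\caG,\caS)_x$: then $x^{-1}g=g$ is a unit or a product of elements of $\caS$, and letting $g$ range over $\caG$ shows that $\caS$ generates $\caG$.

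The main obstacle is organisational rather than deep: one must handle the partially defined composition with care (checking composability at each step so that the products $g_{i-1}s_i$ really are defined and stay in the fibre), and one must reconcile the two notions of ``path'', since connectedness is phrased via backtracking-free paths in $\caP_{g,h}$ while the natural object produced from a word is a walk; invoking the standard reduction of a walk to a reduced path with the same endpoints closes this gap. A final point to pin down is the treatment of units in the definition of generating system, namely that the generated subgroupoid is understood to contain $\caG^{(0)}$, so that an isolated unit contributes a one-vertex (hence connected) fibre on the geometric side and an already-present unit on the algebraic side.
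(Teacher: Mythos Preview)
Your proposal is correct and follows essentially the same approach as the paper. The paper does not isolate your bookkeeping claim as a separate lemma but performs the identical computation inline: for (i)$\Rightarrow$(ii) it writes $h^{-1}g=a_1\cdots a_n$ with $a_i\in\caS$ and exhibits the path $(h,ha_1)(ha_1,ha_1a_2)\cdots(ha_1\cdots a_{n-1},g)$; for (ii)$\Rightarrow$(i) it takes a path from the unit $x$ to $g$ and reads off that each successive quotient lies in $\caS$, concluding $g\in\langle\caS\rangle$ by induction. Your extra care about composability and the walk-versus-reduced-path distinction is harmless but not something the paper dwells on.
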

\begin{proof}
    Fix $x\in\caG^{0}$
    Suppose that (i) holds and let $g,h\in x\caG=\varphi^{-1}(x)$ such that $g\ne h$.
    Since $S$ generates $\caG$, 
    there exist elements $a_1,\dots,a_n\in S$ such that 
    $s(a_i)=r(a_{i+1})$, $s(h)=r(a_1)$, $s(g)=s(a_n)$ and
    $h^{-1}g=a_1\cdots a_n$.
    Hence one has $g=ha_1\cdots a_n$, and thus
    \begin{equation*}
	(h, h a_1)\, (h a_1, ha_1 a_2)\cdots (ha_1 a_2\cdots a_{n-1}, ha_1 a_2\cdots a_{n})
    \end{equation*} 
    is a path from $h$ to $g$. Thus, $\Gamma(\caG,S)_x$ is connected.
	
    Now suppose that (ii) holds and let $g\in\caG$ with $r(g)=x$.
    Then there exists a path from $x$ to $g$ given by
    \begin{equation*}
	(x, b_1)\, (b_1, b_2)\cdots ( b_{m-1}, b_{m})
    \end{equation*} 
    where $r(b_i)=x$ for all $i=1,\dots,m$ and $b_m=g$. 
    By definition, $b_1\in S$ and $b_{j+1}^{-1}b_j\in S$.
    So, by induction, $g=b_m\in\langle S\rangle$. 
    Thus $S$ generates $\caG$.
\end{proof}

%%%%%%%%%%%%%
%%%%%%%%%%%%%
%%%%%%%%%%%%%
\section{Graphs of groupoids}
\label{s:graphgpd}
\begin{defi}
    A {\it graph of groupoids} $\caG(\Gamma)$ based on a connectd graph $\Gamma=(\Gamma^0,\Gamma^1)$ is given by the following data:
    \begin{itemize}
	\itemsep0em
	\item[(i)] a {\it vertex} groupoid $\caG_v$ for every vertex $v\in\Gamma^0$;
	\item[(ii)]an {\it edge} groupoid $\caG_e$ for every edge $e\in\Gamma^1$
		satisfying $\caG_e=\caG_{\bar{e}}$;
	\item[(iii)] an injective homomorphism of groupoids 
    $\alpha_e\colon \caG_e\to\caG_{t(e)}$ for every $e\in\Gamma^{1}$.
    \end{itemize}
If $\Gamma$ is a tree, we call $\caG(\Gamma)$ a {\it tree of groupoids}.
\end{defi}

\begin{hypo}
\label{hy:standingass}
We will asssume that $\alpha_e(\caG_e)$ is a {\it wide} subgroupoid
of $\caG_{t(e)}$ for all $e\in\Gamma^1$, i.e., $\alpha_e(\caG_e)$ has the same unit space as $\caG_{t(e)}$. 
\end{hypo}

\begin{rem}
    \label{rem:spn_category}
    For each $e\in\Gamma^1$ one has a pair of groupoid homomorphisms $(\alpha_e, \alpha_{\bar{e}})$ as follows
    \begin{center}
    \begin{tikzcd}[column sep=normal, row sep=large]
			& \caG_{t(e)} 
			& \caG_e \ar[l,"\alpha_e",labels=above] \ar[r, "\alpha_{\bar{e}}"] 
                & \caG_{o(e)}
    \end{tikzcd}
\end{center}
    Such a diagram is sometimes also called a {\it span of groupoids} in the category $\Grpd$.
\end{rem}
\begin{example}
    Let $\caG(\Gamma)$ be the graph of groupoids	\begin{center}
        \begin{tikzpicture}[->,>=stealth']
        \node[style=circle,fill=red,inner sep=0pt, minimum size=2mm,label=above:$\caG_v$] (n1) at (3,0) {};
        \node[style=circle,fill=blue,inner sep=0pt, minimum size=2mm,label=above:$\caG_w$] (n2) at (1,0)  {};	
        \path[semithick]
        (n1) edge node [left][above] {$\caG_e$} (n2);	
        \path[dashed]
        (n2) edge [bend right] node [right][below] {} (n1);
	\end{tikzpicture}
    \end{center}
    where $\Gamma=\big(\{v,w\}, \{e\}\big)$ and the vertex and edge groupoids are given by 
    \begin{align}
        \caG_v&=\{x_1,x_2,a_v, a_v^{-1}\}, \quad s(a_v)=x_1, \, r(a_v)=x_2,\notag\\
	\caG_w&=\{y_1,y_2,a_w,a_w^{-1}\},\quad
        s(a_w)=y_1, \, r(a_w)=y_2,\notag\\
	\caG_e&=\caG_{\bar{e}}=\{z_1,z_2\}.\notag
    \end{align}
	That is, $\caG(\Gamma)$ is the graph of groupoids
	\vskip0.5cm
	\begin{center}
		\begin{tikzpicture}[->,>=stealth']
		\node[style=circle,fill,inner sep=0pt, minimum size=1.2mm,label=below:$v$] (nv) at (4.5,0) {};
		\node[style=circle,fill,inner sep=0pt, minimum size=1.2mm,label=below:$w$] (nw) at (2.5,0) {};
		\node[style=circle,fill=blue,inner sep=0pt, minimum size=1.2mm,label=below:$y_1$] (n1) at (1,-1) {};
		\node[style=circle,fill=blue,inner sep=0pt, minimum size=1.2mm,label=above:$y_2$] (n2) at (1,1)  {};	
		\node[style=circle,fill=red,inner sep=0pt, minimum size=1.2mm,label=below:$x_1$] (na) at (6,-1) {};
		\node[style=circle,fill=red,inner sep=0pt, minimum size=1.2mm,label=above:$x_2$] (nb) at (6,1)  {};
		\node[style=circle,fill=aogreen,inner sep=0pt, minimum size=1.2mm,label=above:$z_1$] (z1) at (4.2,1.5) {};
		\node[style=circle,fill=aogreen,inner sep=0pt, minimum size=1.2mm,label=above:$z_2$] (z2) at (2.8,1.5)  {};
		
		\path[semithick]
		(nv) edge node [above] {$e$} (nw);
		\path[dashed]
		(nw) edge [bend right] node [right][below] {$\bar{e}$} (nv);
		
		\path[blue,semithick]
		(n1) edge [bend left] node [left] {$a_w$} (n2)
		(n2) edge [bend left] node [right] {$a_w^{-1}$} (n1);
		
		\path[red,semithick]
		(na) edge [bend left] node [left] {$a_v$} (nb)
		(nb) edge [bend left] node [right] {$a_v^{-1}$} (na);
		
		%		\path[aogreen,semithick]
		%		(z1) edge [bend left] node [below] {$\beta$} (z2)
		%		(z2) edge [bend left] node [above] {$\beta^{-1}$} (z1);
		\end{tikzpicture}
	\end{center} 
    The monomorphisms $\alpha_e\colon\caG_e\to\caG_w$ 
    and $\alpha_{\bar{e}}\colon\caG_e\to\caG_v$ are given by 
    \begin{equation*}
        \alpha_e(z_i)=y_i,
	\quad
	\alpha_{\bar{e}}(z_i)=x_i,
	\notag
    \end{equation*} 
    for $i=1,2$. Then one has
    $\image(\alpha_e)=\caG_w^{(0)}$,
    $\image(\alpha_{\bar{e}})=\caG_v^{(0)}$.
 \end{example}

\begin{notation}
\label{n:partiso}
    %Given two sets $X$ and $Y$, we denote by 
    %$f\colon \dom(f)\subseteq X\to Y$
    %a partial function from $\dom(f)$ to $Y$. Thus,
    %$f\colon \dom(f)\subseteq X\to \image(f)\subseteq Y$ is a %bijection.
    For $e\in\Gamma^1$, we put $\caH_e=\alpha_e(\caG_e)$.
    We denote by 
    \begin{equation*}
    \phi_e\colon\caH_{\bar{e}}\subseteq\caG_{o(e)}\to
    \caH_e\subseteq\caG_{t(e)}
    \end{equation*} 
    the map given by
    \begin{equation*}
        \phi_e(g)=(\alpha_e\circ \alpha_{\bar{e}}^{-1} )(g),\quad g\in\caH_{\bar{e}}.
    \end{equation*}
    Hence, $\phi_e$ is an isomorphisms between the groupoids $\caH_{\bar{e}}$ and $\caH_e$ and one has $\phi_{\bar{e}}=\phi_e^{-1}$.
\end{notation}

 \begin{defi}
    A {\it graph of partial isomorphisms} $\Gamma^{\mathrm{pi}}$ is 
    given by a graph $\Gamma=(\Gamma^0,\Gamma^1)$ together with
    \begin{itemize}
    \itemsep0em
        \item[(i)] a set $\Gamma_v$ for each vertex $v\in\Gamma^0$;
        \item[(ii)] a partial isomorphism $\phi_e^0\colon\Gamma_{o(e)}\rightharpoonup\Gamma_{t(e)}$ for each edge $e\in\Gamma^1$, 
        \begin{center}
		\begin{tikzcd}[column sep=normal, row sep=large]
			& \dom(\phi_e^0) \ar[r, "\phi_e"] \ar[d, hook]
			& \image(\phi_e^0) \ar[d, hook]\\
			& \Gamma_{o(e)} 
			&\Gamma_{t(e)}
		\end{tikzcd}
	\end{center}
        i.e., a bijection such that 
        the domain and image of $\phi_e^0$ are subsets $\dom(\phi_e^0)\subseteq\Gamma_{o(e)}$ 
	and $\image(\phi_e^0)\subseteq\Gamma_{t(e)}$
        respectively, and such that 
        $\phi_{\bar{e}}^0=(\phi_e^0)^{-1}$.
    \end{itemize}
    If $\Gamma$ is a tree, we call $\Gamma^{\mathrm{pi}}$ a {\it tree of partial isomorphisms}.
\end{defi}

Thus, Notation \ref{n:partiso} introduces a graph of partial isomorphisms.
\begin{rem}
    For a set $X$, the category of partial bijections $\PB(X)$ denotes the category
    whose objects are the subsets of $X$ and whose morphisms are bijections between such objects.
\end{rem}

\begin{rem}
	\label{rem:pipaths}
	Let $\Gamma^{\mathrm{pi}}$ be a tree of partial isomorphisms.
	
	(a) 
	Since $\Gamma$ is a tree, for $v,w\in\Gamma^0$ there exists a unique reduced path 
	$p\in\mathcal{P}_{v,w}$, i.e., $p=e_1\cdots e_n$, $n\in\N$, $o(p)=o(e_n)=w$ and $t(p)=t(e_1)=v$. 
	\begin{center}
		\begin{tikzpicture}[->,>=stealth']
		\node[style=circle,fill,inner sep=0pt, minimum size=1.2mm,label=below:{$v$}] (t1) at (-4,0) {};
		\node[style=circle,fill,inner sep=0pt, minimum size=1.2mm,label=below:{$t(e_2)$}] (t2) at (-2,0) {};
		\node[style=circle,fill,inner sep=0pt, minimum size=1.2mm] (t3) at (0,0) {};
		\node[style=circle,fill,inner sep=0pt, minimum size=1.2mm,label=below:{$t(e_{n})$}] (tn) at (2,0) {};
		\node[style=circle,fill,inner sep=0pt, minimum size=1.2mm,label=below:{$w$}] (on) at (4,0) {};
		
		\path[semithick]
		(t2) edge node [above] {$e_1$} (t1)
		(t3) edge node [above] {$e_2$} (t2)
		(on) edge node [above] {$e_n$} (tn);
		
		\path[dashed]
		(tn) edge node [above] {} (t3);
		
		%	\path[blue,semithick]
		%	(n1) edge [bend left] node [left] {$a_w$} (n2)
		%	(n2) edge [bend left] node [right] {$a_w^{-1}$} (n1);
		
		\end{tikzpicture}
	\end{center}
	Each edge $e_i$, $i=1,\dots,n$, carries a partial isomorphism
	\begin{equation*}
	\phi_{e_i}^0\colon D_i\to C_i,
	\end{equation*}
	where $D_i=\dom(\phi_{e_i}^0)\subseteq\Gamma_{o(e_i)}$ and
	$C_i=\image\phi_{e_i}^0\subseteq\Gamma_{t(e_i)}$.
	Let $A_n=\dom(\phi_{e_n}^0)$ and $A_i=D_i\cap C_{i+1}$ for $i=1,\dots,n-1$.
	Suppose that $A_i\ne\emptyset$ for all $i=1,\dots,n$.
	Then 
	\begin{equation}
	\Phi_p:=\phi_{e_1}^0\RestrTo{A_1}\,\circ\,\phi_{e_2}^0\RestrTo{A_2}\,\circ\,\cdots\,\circ\,\phi_{e_n}^0\RestrTo{A_n}
	\end{equation}
	defines a partial isomorphism between $\Gamma_{w}$ and $\Gamma_{v}$.
	
	\vspace{0.2cm}
	(b) Let $U=\bigsqcup_{v\in\Gamma^0} \Gamma_v$ be the disjoint union of the vertex graphs of $\Gamma^{\mathrm{pi}}$. 
	Then the partial isomorphisms $\phi_e^0$, $e\in\Gamma^1$, associated to the edges of $\Gamma$ induce an equivalence relation on $U$ given by
	\begin{equation}
	\label{eq:relpi} 
	x\sim y \,\Longleftrightarrow \mbox{ there exists }p\in\mathcal{P}_{v,w} \mbox{ such that } \Phi_p(y)=x,\,
	x\in\Gamma_{v},\, y\in\Gamma_w.
	\end{equation}
	Clearly, $\sim$ is reflexive. 
	It is also symmetric, since $\Phi_p(x)=y$ implies that $\Phi_{\bar{p}}(y)=x$,
	where $\bar{p}$ is the reversal path of $p$.
	Finally, it is transitive, since $\Phi_p(x)=y$ and $\Phi_q(y)=z$ implies that $o(p)=t(q)$ and
	$\Phi_{pq}(x)=z$.
	
	Therefore one has that for each $x\in \image(\Phi_p)\subseteq \Gamma_{v}$ 
	there exists a unique $y\in\dom(\Phi_p)\subseteq\Gamma_{w}$ 
	such that $x\sim y$.
\end{rem}

\begin{defi}
    Let $A$ be a set, and let $\mathcal{R}$ be an equivalence relation on $A$. 	
    A subset $B$ of $A$ is said to be {\it saturated} with respect to $\mathcal{R}$ if for all $x,y \in A$, $x\in B$ and $x\mathcal{R}y$ imply $y\in B$.  
    For a subset $C$ of $A$, the {\it saturation} of $C$ with respect to $\mathcal{R}$ is the least saturated subset $S(C)$ of $A$ that contains $C$.
\end{defi}

Equivalently, $B$ is saturated if it is the union of a family of equivalence classes with respect to $\mathcal{R}$.

\begin{defi}
    A tree of partial isomorphisms $\Gamma^{\mathrm{pi}}$ is said to be {\it rooted}
    if there exists a vertex $v\in\Gamma^0$, called the {\it root}, such that 
    $\Gamma_v$ is a system of representatives for the relation $\sim$ defined in \eqref{eq:relpi}, i.e., 
    it contains exactly one element for each equivalence class of $\sim$.
\end{defi}

\begin{rem}
\label{rem:treepigraphgpd}
    One may associate to any graph of groupoids $\caG(\Lambda)$ a tree of partial isomorphisms $\Gamma^\mathrm{pi}$ as follows.
    The tree $\Gamma\subseteq\Lambda$ is a maximal subtree of $\Lambda$.
    The sets $\Gamma_v$, $v\in\Gamma^0$, associated to the vertices of $\Gamma$ are given by $\Gamma_v=\caG_v^{(0)}$ and the partial isomorphisms 
    $\phi_e^0\colon \caG_{o(e)}^{(0)}\to \caG_{t(e)}^{(0)}$, $e\in\Gamma^1$, are given by 
    \begin{equation*}
	\phi_e^0=\phi_e\RestrTo{\caG_{o(e)}^{(0)}},
    \end{equation*}
    where $\phi_e\RestrTo{\caG_{o(e)}^{(0)}}$ is the restriction of the map $\phi_e$ defined in Notation \ref{n:partiso} to the objects of $\caG_{o(e)}$.
    Since we will only consider graphs of groupoids where the edge groupoids are wide subgroupoids of the adjacent vertex groupoids, i.e., $\alpha_e(\caG_e)=\caH_e\subseteq\caG_e$ is a wide subgroupoid of $\caG_{t(e)}$ for all $e\in\Gamma^1$,
    we shall only deal with tree of partial isomorphisms $\Gamma^{\mathrm{pi}}$ such that each $\phi_e^0\colon\caG_{o(e)}^{(0)}\to\caG_{t(e)}^{(0)}$, $e\in\Gamma^1$, is bijective. 
    Therefore, each $\Gamma_v$ is a system of representative for the equivalence relation $\sim$ defined in \eqref{eq:relpi}.
\end{rem}

Thus, under our assumptions, we will deal with rooted trees of total isomorphisms.

\begin{rem}
\label{rem:fpi}
    Let $\Gamma^{\mathrm{pi}}$ be a tree of partial isomorphisms.
    Then $\Gamma^{\mathrm{pi}}$ defines naturally a forest $\caF^{\mathrm{pi}}(\Gamma)$ given as follows.
    \begin{align}
        {\caF^{\mathrm{pi}}}(\Gamma)^0&=\bigsqcup_{v\in\Gamma^0}\Gamma_v\notag\\
	{\caF^{\mathrm{pi}}}(\Gamma)^1&=\{\, (e,x,y) \in
        \Gamma^1\times{\caF^{\mathrm{pi}}}(\Gamma)^0\times {\caF^{\mathrm{pi}}}(\Gamma)^0\mid x\in\Gamma_{o(e)}, y\in\Gamma_{t(e)}, \phi_e^0(x)=y  \,\}.\notag
    \end{align}
    Clearly, $\caF^{\mathrm{pi}}(\Gamma)$ is a graph with terminus and origin maps given by the projection on the second and third component, respectively, and inversion given by $(e,x,y)^{-1}=(\bar{e},y,x)$.
    Moreover, it is a disjoint union of graphs since each connected component is given by an equivalence class of the relation $\sim$ defined in \eqref{eq:relpi}.
    Finally, since $\Gamma$ is a tree, for each pair of vertices $x\ne y$, $x$ and $y$ in the same connected component of  $\caF^{\mathrm{pi}}(\Gamma)$ there is at most one path from $x$ to $y$.
    Thus, each connected component of $\caF^{\mathrm{pi}}(\Gamma)$ is a tree and hence $\caF^{\mathrm{pi}}(\Gamma)$ is a forest.
\end{rem}

\begin{defi}
\label{defi:reprpi}
    Let $\Gamma^{\mathrm{pi}}$ be a rooted tree of partial isomorphisms and let $F$ be a forest.
    A {\it representation} of $\Gamma^{\mathrm{pi}}$ on $F$ is a graph homomorphism 
    $\chi\colon \caF^{\mathrm{pi}}(\Gamma)\to F$ from the forest $\caF^{\mathrm{pi}}(\Gamma)$ defined by 
    $\Gamma^{\mathrm{pi}}$ to $F$.
    We say that 
    $F'=\chi(\caF^{\mathrm{pi}}(\Gamma))\subseteq F$ {\it is the representation} of $\Gamma^\mathrm{pi}$ on $F$.
\end{defi}

Given a tree of partial isomorphisms $\Gamma^\mathrm{pi}$, 
we can associate to each edge $e\in\Gamma^1$ 
a set $\Gamma_e$ with injections 
$\alpha_{\bar{e}}\colon\Gamma_e\to\Gamma_{o(e)}$ and 
$\alpha_e\colon\Gamma_e\to\Gamma_{t(e)}$ in such a way that 
$\alpha_{\bar{e}}(\Gamma_e)=\mathrm{dom}(\phi_e)$ and 
$\alpha_e(\Gamma_e)=\image(\phi_e)$.

\begin{defi}
\label{defi:chi}
    Let $F'=\chi(\caF^{\mathrm{pi}}(\Gamma))\subseteq F$ be a representation of a tree of partial isomorphisms 
    $\Gamma^{\mathrm{pi}}$ on $F$.\\
    For $v\in\Gamma^0$, we denote by $\chi(\Gamma_{v})$ 
    %	the representation of $\Gamma_v$ on $F$.
    the set 
    \begin{equation*}
	\chi(\Gamma_{v})=\{\,\chi(x)\mid 
	x\in \Gamma_v\subseteq\caF^\mathrm{pi}(\Gamma)^{0} \,\}\subseteq {F'}^0
    \end{equation*}
    and we call it the {\it representation of $\Gamma_v$ on $F$}.
    For $e\in\Gamma^1$, we denote by
    \begin{equation*}
	\chi(\Gamma_e)=\{\,\eps\in {F'}^1\mid
        o(\eps)\in\chi(\Gamma_{o(e)}),\, t(\eps)\in\chi(\Gamma_{t(e)})\,\}
	\subseteq{F'}^1
    \end{equation*} 
    and we call it the {\it representation of $\Gamma_e$ on $F$}.
\end{defi}

\begin{defi}
    Let $\caG$ be a groupoid acting on a forest $F$ 
    and let $F'\subseteq F$ be a representation of a tree of partial isomorphisms 
    $\Gamma^{\mathrm{pi}}$ on $F$.
    We say that $F'$ is {\it $\caG$-invariant} if
    $\mu^0(\caG,F'^0)\subseteq F'^0$ and $\mu^1(\caG,F'^1)\subseteq F'^1$.
\end{defi}

\begin{rem}
    Let $\caG$ be a groupoid acting on a forest $F$ with momentum map $\varphi\colon F^0\to\caG^{(0)}$
    and let $F'\subseteq F$ be any $\caG$-invariant representation of a tree of partial isomorphisms 
    $\Gamma^{\mathrm{pi}}$ on $F$.
    The action of $\caG$ on $F$ induces an action of $\caG$ on $F'$ which gives rise to
    a tree of groupoids $\caG_\chi(\Gamma)$ based on $\Gamma$ as follows.
    We put
    \begin{align}
        \caG_{\chi,v}^{(0)}&:=\Gamma_v,\quad v\in\Gamma^0,\notag\\
	\caG_{\chi,e}^{(0)}&:=\Gamma_e, \quad e
        \in\Gamma^1.\notag
    \end{align}
    Then we define $\caG_{\chi,v}$ to be the groupoid on $\caG_{\chi,v}^{(0)}$ whose morphisms are given by the action of $\caG$ on $\chi(\Gamma_v)\subseteq F^0$, i.e.,
    \begin{equation}
    \label{eq:chiv}
        \caG_{\chi,v}=\{\, g\in\caG\mid s(g)=\varphi(\nu),\, \nu\in\chi(\Gamma_v) \mbox{ and } 
	\mu(g,\nu)\in\chi(\Gamma_v) \,\}.
    \end{equation}
    Similarly, we define $\caG_{\chi,e}$ to be the groupoid on $\caG_{\chi,e}^{(0)}$
    whose morphisms are given by the action of $\caG$ on $\chi(\Gamma_e)\subseteq F^1$, i.e.,
    \begin{equation}
    \label{eq:chie}
        \caG_{\chi,e}=\{\, g\in\caG\mid s(g)=\varphi(o(\eps)),\, \eps\in\chi(\Gamma_e) \mbox{ and }  \mu(g,\eps)\in\chi(\Gamma_e) \,\}.
    \end{equation}
\end{rem}

\begin{defi}
    Let $\caG(T)$ be a tree of groupoids and let $F$ be a forest.
    A {\it representation of $\caG(T)$ on $F$} is a representation $\chi$ 
    of the tree of partial isomorphisms $T^{\mathrm{pi}}$ underlying $\caG(T)$ on $F$
    (see Remark \ref{rem:treepigraphgpd})
    such that there are groupoid isomorphisms 
    between the vertex and edge groupoids of $\caG(T)$ and the groupoids
    defined by the action of $\caG$ on the representations of the $T_v$'s and $T_e$'s
    on $F$, i.e., groupoid isomorphisms
    \begin{equation*}
	\caG_v\to \caG_{\chi,v}, \quad \caG_e\to \caG_{\chi,e},
    \end{equation*}
    for $v\in T^0$ and $e\in T^1$.
\end{defi}

\begin{rem}
    Let $\caG$ be a groupoid acting on a forest $F$.
    The action of $\caG$ on $F$ induces an equivalence relation on $F^0$ and $F^1$,
    which we denote by $\caR_\caG$, defined by
    \begin{equation}
        \begin{aligned}
            v&\caR_\caG w \Longleftrightarrow \mbox{ there exists } g\in\caG: \mu(g,v)=w\\
            e&\caR_\caG f \Longleftrightarrow \mbox{ there exists } h\in\caG: \mu(h,e)=f.
	\end{aligned}
    \end{equation}
%	(see Remark \ref{rem:orbitrel}).
\end{rem}

\begin{defi}
\label{def:treeofrepr}
    Let $\caG$ be a groupoid acting on a forest $F$.
    A {\it tree of representatives} $(\caG(T),\chi)$ 
    of the action of $\caG$ on $F$
    is given by a rooted tree of groupoids $\caG(T)$, based on a rooted tree $T$, 
    with vertex groupoids $\caG_x$, 
    $x\in T^{0}$, and a representation $\chi$ of $\caG(T)$ on $F$ 
    such that the saturations of the $\chi(\caG_x^{(0)})$'s, $x\in T^{0}$, 
    with respect to the equivalence relation $\caR_\caG$ 
    %	defined by the action of $\caG$ on $F^{0}$ 
    give a partition of $F^{0}$.
\end{defi}

\section{The graph of groupoids associated to a groupoid action on a forest}
\label{s:gractforest}

\subsection{Fiber spaces}
    A set $F$ is said to be a {\it fiber space} on a set $X$ if there exists 
    a surjective map $\pi\colon F\to X$, called the {\it projection}, such that $\pi^{-1}(x)$
    is countable for all $x\in X$.
    We call $F_x=\pi^{-1}(x)$ the {\it fiber} of $x$, $x\in X$.
    A {\it section} of $F$ is a map $\sigma\colon X\to F$ such that $\pi\circ\sigma=\iid_X$.
    If $A\subseteq X$ and $\sigma_A\colon A\to F$ is such that $\pi\circ\sigma_A=\iid_A$, 
    we say that $\sigma_A$ is a {\it partial section} of $F$.

\begin{rem}
\label{rem:fiberenumeration}
    In the case we are interested in, the target set $X$ of the projection $\pi$ is countable, and thus the fiber space $F$ is countable too.
    If $F$ is a fiber space on $X$, then there exists a countable family of partial sections of $F$ such that their images form a partition of $F$.
    Moreover, there exists an enumeration of the fibers of $F$, i.e., there exists a map 
    $N\colon F\to\N$ such that $N|_{F_x}\colon F_x\to\N$ is injective for all $x\in X$.
    One may suppose that the enumeration in each fiber of $F$ starts from $1$ 
    and follows the natural enumeration of $\N$.
\end{rem}

%\begin{prop}
    %Let $\caG$ be a groupoid acting on a graph $\Gamma$, with momentum map $\varphi\colon \Gamma^0\to\caG^{(0)}$. Then each fiber $\big(\varphi^{-1}(x), (\varphi\circ t)^{-1}(x)\big)$, $x\in\caG^{(0)}$, is a connected subgraph of $\Gamma$.
%\end{prop}

\subsection{Groupoid actions on forests}

We will deal with groupoid actions on fiber spaces.

\begin{defi}
\label{def:stab}
    Let $\caG$ be a groupoid acting on a fiber space $(F,\pi)$ on $\caG^{(0)}$.
    If $\sigma\colon A\subseteq\caG^{(0)} \to F$ is a partial section of $F$, 
    we call the {\it stabilizer} of $\sigma$ the subgroupoid
    \begin{equation}
        \Stab_\caG(\sigma)=\{\,g\in \caG\mid g\cdot\sigma(s(g))=\sigma(r(g)), s(g),r(g)\in A\,\}
	\subseteq \caG.
    \end{equation}
\end{defi}

\begin{rem}
    Let $\caG$ be a groupoid acting on a graph $\Gamma=(\Gamma^0,\Gamma^1)$
    (cf. Definition \ref{def:gpdaction}).
    Then $\Gamma^0$ is a fiber space on $\caG^{(0)}$ via the momentum map $\varphi\colon\Gamma^0\to\caG^{(0)}$. 
\end{rem}

\begin{hypo}[Arboretum hypothesis]
\label{hy:arboretum}
    Given a groupoid action on a forest, we will suppose that each fiber of the forest is a tree  
    (cf. \cite{alv08}).
\end{hypo}

\begin{thm}
    \label{cl:parsec}
    Let $\caG$ be a groupoid acting without inversion of edges on a  forest $F=(F^0,F^1)$.
    Then there exists a countable or finite family $\Sigma=\{\sigma_{i}\}_{i\in I}$, $I\subseteq\N$, of partial sections of $F^0$
    with domains $X_{i}\subseteq\caG^{(0)}$, i.e., 
    $\sigma_{i}\colon X_{i}\to F^0$, $i\in I$, such that
    \begin{itemize}
	\item[(i)] $S(U_{i})\cap S(U_{j})=\emptyset$
            for all $i\ne j$, where $U_{i}=\sigma_{i}(X_{i})$ and
		$S(U_{i})$ denotes the saturation of
            $U_{i}$ with respect to $\caR_{\caG}$;
        \item[(ii)] for $V=\bigcup_{i\in I} U_{i}$, one has that
		$V \cap\varphi_0^{-1}(x)$ is connected for all
            $x\in \caG^{(0)}$;
	\item[(iii)] $S(V)= F^0$.
	\end{itemize}
\end{thm}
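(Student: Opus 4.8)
The plan is to build $\Sigma$ one $\caG$-orbit at a time: through each orbit of $F^0$ I will run a full section over its momentum image, and then bundle these sections into a countable family, indexing them so that sections through different orbits lying over the \emph{same} orbit of units receive different indices. Everything hinges on the fact that, since $F^0$ is fibered over $\caG^{(0)}$ with countable fibers, only countably many $\caG$-orbits can sit over a fixed orbit of units, and this is what keeps $I\subseteq\N$.

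First I would analyze the orbit structure of the $\caG$-set $F^0$ relative to the momentum $\varphi\colon F^0\to\caG^{(0)}$. For a unit $x\in\caG^{(0)}$ write $[x]=\{\,r(g)\mid s(g)=x\,\}$ for its orbit under the $\caG$-action on the unit space. I record two facts about any $\caG$-orbit $O\subseteq F^0$. (a) Each orbit $O$ is a transitive $\caG$-set --- given $gv,hv\in O$ one checks $(hg^{-1})(gv)=hv$, the product being defined since $s(h)=s(g)=\varphi(v)$ --- and $\varphi$ sends $O$ onto a single unit-orbit $[x]$ which it meets in every fiber: for $y\in[x]$ pick $g$ with $r(g)=y$, $s(g)=\varphi(v)$, so that $gv\in O\cap\varphi^{-1}(y)$. (b) Distinct orbits over $[x]$ meet the fiber $\varphi^{-1}(x)$ in disjoint nonempty sets, so there are at most $|\varphi^{-1}(x)|\le\aleph_0$ many $\caG$-orbits over each $[x]$.

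Next I would make the selections. Using the axiom of choice I fix a transversal $T_0\subseteq\caG^{(0)}$ for the unit-orbits, and for each $x\in T_0$ I enumerate the orbits over $[x]$ as $O_1^{[x]},O_2^{[x]},\dots$ by ordering the $\caR_\caG$-classes inside the fiber $\varphi^{-1}(x)$ according to their least value under the Lusin--Novikov enumeration $N\colon F^0\to\N$; this is legitimate because $N$ is injective on each countable fiber. For every orbit $O_j^{[x]}$ I define a full section $\sigma_j^{[x]}\colon[x]\to O_j^{[x]}$ over the whole unit-orbit by sending $y\in[x]$ to the $N$-minimal point of $O_j^{[x]}\cap\varphi^{-1}(y)$, which exists by fact (a). Finally I set $\sigma_j:=\bigsqcup_{[x]}\sigma_j^{[x]}$, a partial section with domain $X_j=\bigcup\{\,[x]:[x]\text{ carries at least }j\text{ orbits}\,\}\subseteq\caG^{(0)}$, and take $\Sigma=\{\sigma_j\}_{j\in I}$ with $U_j=\sigma_j(X_j)$.

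It then remains to verify (i)--(iii), which I expect to be formal once the bookkeeping is in place. Each $U_j$ meets a fiber at most once, since distinct unit-orbits lie over disjoint fibers while inside a fixed $[x]$ the map $\sigma_j^{[x]}$ is single valued; hence $U_j$ is a genuine partial section and its saturation is $S(U_j)=\bigsqcup_{[x]}O_j^{[x]}$. For $j\ne k$ these saturations are disjoint, because over a common $[x]$ one has $O_j^{[x]}\ne O_k^{[x]}$, while over different unit-orbits the fibers, hence the orbits, are already disjoint; this gives (i). As $O_1^{[x]}$ exists for every $[x]$, the section $U_1$ is full over all of $\caG^{(0)}$ and meets every fiber, giving (ii). Finally $\tilde V=\bigcup_j S(U_j)=\bigcup_{[x]}\bigl(\text{all orbits over }[x]\bigr)=F^0$, which is (iii). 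The one genuinely delicate point is fact (b): pinning the cardinality of the orbit set over each $[x]$ to the countability of the fibers, and turning it into a coherent enumeration that indexes $\Sigma$ by a subset of $\N$; the transitivity identity and the disjointness checks are then purely formal.
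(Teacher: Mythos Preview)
Your argument is correct and proves the theorem as stated, but it takes a genuinely different route from the paper's. You ignore the forest structure entirely and simply organize a transversal of each $\caG$-orbit in $F^0$: over every unit-orbit $[x]$ you enumerate the (countably many) $\caG$-orbits and declare $\sigma_j$ to pick out the $j$-th one over each $[x]$. This is clean and in fact works for \emph{any} $\caG$-set with countable fibers, not just a forest; the hypothesis that $F$ is a forest plays no role in your proof.

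The paper, by contrast, builds the $\sigma_i$ recursively using the tree metric: $\sigma_1$ is a full section, and for $i\ge 2$ the image of $\sigma_{i}$ is chosen among vertices at distance~$1$ from $V_{i-1}=\bigsqcup_{k<i}\image(\sigma_k)$. The forest structure is then used to show that this process exhausts $F^0$. What this buys is recorded in the remark immediately following the theorem (Remark~\ref{rem:mn}): every point of $\image(\sigma_i)$ is adjacent in its fiber to a point of some earlier $\image(\sigma_{m_i})$. This adjacency information is exactly what Proposition~\ref{prop:arboretumrep} needs to assemble the $\sigma_i$ into a \emph{tree} of representatives $(\caG(T),\chi)$, with the edges of $T$ coming from these distance-$1$ incidences. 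Your sections satisfy (i)--(iii) but carry no such adjacency structure, so while they prove the theorem as stated, they would not feed directly into the subsequent construction of the desingularization.
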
 

	\begin{proof}	
		We construct recursively a sequence of partial sections 
		$\{\sigma_i\}_{i\in I}$, $I \subseteq\N$,
		satisfying (i) and (ii).
		Let $X=\caG^{(0)}$ be the unit space of the groupoid $\caG$.
		The graph $F=(F^0,F^1)$ is fibered on $X$ 
		via the map $\varphi=(\varphi_0,\varphi_1)$, where $\varphi_0\colon F^0\to X$
		and  $\varphi_1=\varphi_0\circ t \colon F^1\to X$.
		By Remark \ref{rem:fiberenumeration}, there exists an enumeration of every fiber of $F$. That is, there exists a bijection $\alpha_{x}\colon \N\to\varphi_0^{-1}(x)$, $x\in X$.
		Let $\sigma_1\colon X_1\subset X\to F^0$ be the partial section whose image is given by the elements corresponding to the smallest number in each fiber. That is, $\image(\sigma_1)=\{\,\alpha_x(1)\mid x\in X\,\}$.
		Let $U_1=\image(\sigma_1)$ and let $S(U_1)$ be the saturation of $U_1$
		with respect to $\caR_{\caG}$.
		Put $C_1=F^0\setminus S(U_1)$.
		If $C_1=\emptyset$, then  $U_1$ is a complete domain for the action of $\caG$ on $F^0$ and the claim follows by choosing $n=1$ and $\Sigma=\{\sigma_1\}$.	
		Otherwise, we use the same argument as above to construct partial sections $\sigma_k$,  $k\ge2$, 
		recursively.\\
		Let $n\ge 1$.
		Suppose that we have constructed $n$ partial sections $\{\sigma_i\}_{1\le i\le n}$
		satisfying conditions (i) and (ii).
		Let $V_{n}=\sqcup_{i=1}^{n} U_i$
		and let $C_{n}=F^0\setminus S(V_{n})$.
		Then either $C_{n}=\emptyset$ and thus (iii) holds for $I=\{1,\dots,n\}$,
		or $C_{n}\ne\emptyset$ and we construct a partial section
		$\sigma_{n+1}$ as follows.
  Since the fibers of $F$ are connected by Hypothesis \ref{hy:arboretum} and the action of $\caG$ on $F$ preserves the distance, there exist elements in $C_n$ which have unitary distance from $S(V_n)$.
		Let $C_{n}'=\{\,x\in C_n\mid \dist(x,V_n)=1\,\}$ and put
		$X_{n+1}=\{\,x\in X\mid \varphi_0^{-1}(x)\cap C_n'\ne\emptyset \,\}$.
		We define $\sigma_{n+1}\colon X_{n+1}\to C_n'$ by
		$\sigma_{n+1}(x)=\min \{\, \varphi_0^{-1}(x)\cap C_n' \,\}$,
		where the minimum is taken over the enumeration of 
		the elements of each fiber $\varphi_0^{-1}(x)$.
		Suppose that $I=\N$ and $F^0\ne\tilde{V}$, where
		$V=\cup_{k=1}^\infty U_k$.
		%	Suppose that $\tilde{V}\ne F^0$.
		Then the set $C=\{\,x\in F^0\setminus\tilde{V}\mid\dist(x,V)=1\,\}$
		is nonempty.
		Let $a\in C$. 
		Then there exists $v\in \tilde{U}_k$, for some $k\in\N$, 
		such that $\dist(a,v)=1$.
		Hence, $\dist(a, \tilde{V}_{l})=1$ for all $l\ge k$,
		and one has that if $a\in\varphi^{-1}(\varphi(a))\cap C_l$
		is the $m$-th element in the enumeration, $m\in\N$,
		then
		\begin{equation}
		a\in\bigcup_{1\leq j\leq m} \image(\sigma_{k+j})\subseteq V,
		\end{equation}
		a contradiction.
		Hence, one has that $V=F^0$.
	\end{proof}

\begin{rem}
	\label{rem:mn}
	By the construction of the $\sigma_i$'s used above, one has that $\image(\sigma_{i})$ contains the smallest elements which have distance $1$
	from $V_{i-1}=\bigcup_{k=1}^{i-1} \image(\sigma_k)$ 
	in each fiber $\varphi_0^{-1}(x)$, $x\in\caG^{(0)}$.
	Then for each $i\in\{1,\dots,n\}$ one has a map
	\begin{equation}
	\begin{aligned}
	m_i\colon F^0 &\to \{1,\dots,n\}\\
	a&\mapsto m_i(a),
	\end{aligned}
	\end{equation} 
	where $m_i(a)$ denotes the integer $m_i(a)< i$ such that the vertex $a$ is adjacent 
	to $\image(\sigma_{m_i(a)})$ in the fiber of $\varphi(a)$.
	That is, there exists $b\in\image(\sigma_{m_i(a)})\cap \varphi_0^{-1}(y)$, where $y=\varphi(a)$,
	such that $a$ and $b$ are adjacent in $F_y^0$.
\end{rem}

\begin{notation}
	For a graph $\Gamma=(\Gamma^0,\Gamma^1)$, $|\Gamma^0|\ge2$, and $v\in\Gamma^0$, 
	we denote by $\Gamma - v$ the subgraph of $\Gamma$ given by
	\begin{equation}
	\begin{aligned}
	(\Gamma - v)^0&=\Gamma^0\setminus\{v\}\\
	(\Gamma - v)^1&=\Gamma^1\setminus\{\,e \in\Gamma^1\mid t(e)=v \mbox{ or } o(e)=v\,\}.
	\end{aligned}
	\end{equation}
\end{notation}

Note that if $T=(T^0, T^1)$ is a connected graph, $|T^0|\ge2$ and $v\in T^0$
is a terminal vertex, then $T$ is a tree if and only if $T- v$ is a tree (see \cite[Proposition 9]{ser:trees}).\\
	
%The following property will turn out to be useful for our purpose.
%\begin{lem}[{\cite[Proposition 9]{ser:trees}}]
%	%[Proposition 1.10, T.W. notes]
%	\label{lem:treevertex}
%	Let $T=(T^0, T^1)$ be a connected graph, $|T^0|\ge2$, and let $v\in T^0$
%	be a terminal vertex. 
%	Then $T$ is a tree if and only if $T- v$ is a tree.
%\end{lem}
%\begin{proof}
%	If $T$ is a tree, then $T- v$ is a tree since it is a connected subgraph of $T$.
%	Suppose that $T- v$ is a tree.
%	Since $v$ is a terminal vertex, one has that $T^1=(T-v)^1\cup \{e,\bar{e}\}$,
%	where $e\in T^1$ is the unique edge such that $o(e)=t(\bar{e})=v$.
%	Let $u,w\in(T-v)^0$. By hypothesis, one has that $|\caP_{u,w}|=1$.
%	If $q\in\caP_{v,w}$, then $q=p\, e$, where $p\in\caP_{o(e),w}$.
%	Hence, $\caP_{v,w}|=1$ and thus $T$ is a tree.
%\end{proof}

We are now ready to show the existence of a tree of representatives for 
the action of a groupoid on a forest (see Definition \ref{def:treeofrepr}).

\begin{prop}
	\label{prop:arboretumrep}
	Let $\caG$ be a groupoid acting without inversion on a forest $F$.
	Then there exists a tree of representatives $(\caG(T),\chi)$ of the
	action of $\caG$ on $F$
	such that $\varphi\big(\chi(\caG_r^{(0)})\big)=\caG^{(0)}$, where $r\in T^0$ is the root of $T$.
\end{prop}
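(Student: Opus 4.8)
The plan is to read the tree of representatives directly off the partial sections supplied by Theorem \ref{cl:parsec}. Writing $\Sigma=\{\sigma_i\}_{i\in I}$ with $\sigma_i\colon X_i\to F^0$ and $U_i=\sigma_i(X_i)$, conditions (i) and (iii) of that theorem say that the saturations $\tilde U_i$ are pairwise disjoint and cover $F^0$, which is already the partition of $F^0$ demanded in Definition \ref{def:treeofrepr}; and since $\sigma_1$ is defined on all of $\caG^{(0)}$ one has $\varphi(U_1)=\caG^{(0)}$, the stated condition on the root. So the whole task reduces to organizing the index set $I$ into a rooted tree $T$ and equipping it with vertex and edge groupoids and a representation $\chi$ satisfying $\chi|_{\Gamma_v}=\sigma_v$ and $\chi(\Gamma_v)=U_v$.

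To fix the combinatorics of $T$ I would use the adjacency recorded in Remark \ref{rem:mn}: every point $\sigma_i(x)$ with $i>1$ lies at distance one from $V_{i-1}$, hence is adjacent inside the fibre $\varphi_0^{-1}(x)$ to a point of some earlier saturation $\tilde U_j$, $j<i$. Because adjacency in $F$ is $\caG$-equivariant, the least such $j$ depends only on the $\caR_\caG$-class of $\sigma_i(x)$, so splitting each $X_i$ along this \emph{parent index} refines $\Sigma$ into a family in which every section has a unique parent; the refinement preserves the partition, since two refined pieces of a given $U_i$ whose saturations met would contain $\caR_\caG$-equivalent points and therefore share their parent index. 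Adjoining the refined sections in increasing order then attaches each one as a single new leaf, so by the fact that a connected graph $T$ with a terminal vertex $v$ is a tree if and only if $T-v$ is (\cite[Proposition 9]{ser:trees}) one concludes inductively that $T$ is a tree, rooted at the vertex carrying $\sigma_1$.

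For the groupoid data I set $\Gamma_v=X_v$, let $\chi$ restrict to $\sigma_v$ on $\Gamma_v$, and take $\phi_e^0$ to be the identity on the set of fibres where a child point is adjacent to its parent point. Then $\chi$ carries each edge of $\caF^{\mathrm{pi}}(T)$ to a genuine edge of $F$ lying in a single fibre, so $\chi$ is a graph homomorphism, and composing the fibre-identities along the path from $v$ to the root sends each $x\in X_v$ back to $x\in\Gamma_r=\caG^{(0)}$, which shows that $\Gamma_r$ is a system of representatives for the relation $\sim$ and hence that $T^{\mathrm{pi}}$ is rooted. Defining the groupoids through the action, $\caG_v:=\caG_{\chi,v}$ and $\caG_e:=\caG_{\chi,e}$ as in \eqref{eq:chiv} and \eqref{eq:chie}, makes the groupoid isomorphisms required of a representation tautological; the inclusions $\caG_{\chi,e}\subseteq\caG_{\chi,t(e)}$ (an element fixing a representing edge fixes its terminus) furnish the injective structure maps $\alpha_e$, and $\caG_e=\caG_{\bar e}$ is immediate.

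I expect the genuine obstacle to be exactly the tree combinatorics of the second step: the parent of a point is a priori fibre-dependent, so one must verify that passing to the parent \emph{orbit} (rather than to an adjacent section value) is well defined on $\caR_\caG$-classes, that the induced splitting keeps the saturations pairwise disjoint, and that the incremental leaf-attachment consequently yields a genuine tree with a single well-defined parent vertex for each non-root section. Once this is secured, the partition property, the root condition, and the representation axioms all follow from Theorem \ref{cl:parsec} and from having defined the vertex and edge groupoids through the action itself.
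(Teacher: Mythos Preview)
Your approach is essentially that of the paper: invoke Theorem \ref{cl:parsec}, turn the adjacency information of Remark \ref{rem:mn} into the edge structure of a rooted tree $T$ on the index set of the sections, and then set the vertex and edge groupoids to be the stabilizers of the corresponding partial sections (your $\caG_{\chi,v}$ coincides with the paper's $\Stab_\caG(\sigma_v)$). The paper builds $T_k$ directly with vertex set $\{v_1,\dots,v_k\}$ and one edge from $v_k$ to its ``parent'' $v_{m_k}$, appealing to the terminal-vertex characterization of trees; it does not explicitly confront the possibility that $m_k(\sigma_k(x))$ varies with $x$, which is precisely the point you isolate and handle by refining the sections. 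In that sense your write-up is more careful than the paper's on the tree combinatorics.

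There is, however, one subtlety your refinement introduces that you should address. You define the parent index as the least $j$ for which $\sigma_i(x)$ is adjacent to a point of the \emph{saturation} $\tilde U_j$; this is indeed $\caR_\caG$-invariant and hence keeps the saturations of the refined pieces disjoint. But for $\chi$ to be a graph homomorphism on edges you need $\sigma_i(x)$ adjacent to $\sigma_j(x)$ in $F$ (image adjacency), and adjacency to $\tilde U_j$ does not give this: the neighbour of $\sigma_i(x)$ in $\tilde U_j\cap\varphi_0^{-1}(x)$ need not be $\sigma_j(x)$ itself. The construction in Theorem \ref{cl:parsec} guarantees that $\sigma_i(x)$ is adjacent to $\sigma_k(x)$ for \emph{some} $k<i$, but nothing forces this $k$ to equal your orbit-minimal $j$. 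You should either argue that the two coincide (they need not in general), or refine instead by the image-adjacency index and then check separately that the refined saturations stay disjoint, which is exactly the tension you anticipated in your last paragraph. The paper's proof sidesteps this by defining $\sigma_e(x)$ as ``the unique $f\in F^1$ with $o(f)=\sigma_{o(e)}(x)$ and $t(f)=\sigma_{t(e)}(x)$'' without justifying existence, so neither argument is fully clean here; but since you have already flagged the parent-index issue as the crux, you should close this loop explicitly.
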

\begin{proof}
	By Theorem \ref{cl:parsec}, there exists a family $\Sigma=\{\sigma_i\}_{i\in I}$, $I\subseteq\N$,
	of partial sections of $F^0$ satisfying (i), (ii) and (iii) of the theorem.
	If $I=\{1\}$, the thesis follows by choosing $T$ to be the tree with one vertex $v$ and no edges. 
	Then $\caG_v=\Stab_{\caG}(\sigma_1)$ (cf. Definition \ref{def:stab})
	is the groupoid associated to the vertex $v$.
	Moreover, the graph $F'=(F'^0,F'^1)$ given by 
	$
	F'^0=U_1,\quad F'^1=\emptyset
	$
	is the representation of $\caG(T)$ on $F$. 
	If $I=\{1,2\}$, the thesis follows by choosing $T=(T^0,T^1)$ to be the segment tree,
	i.e., $T^0=\{v_1,v_2\}$ and  $T^1=\{e,\bar{e}\}$ as follows
	\begin{center}
		\begin{tikzpicture}[->,>=stealth']
		\node[style=circle,fill,inner sep=0pt, minimum size=1.2mm,label=below:{$v_1$}] (v) at (1,0) {};
		\node[style=circle,fill,inner sep=0pt, minimum size=1.2mm,label=below:{$v_2$}] (w) at (-1,0) {};
		
		\path[semithick]
		(v) edge [bend right] node [above] {$e$} (w)
		(w) edge [bend right] node [below] {${\bar{e}}$} (v);
		\end{tikzpicture}
	\end{center}
	with vertex groupoids $\caG_{v_1}=\Stab_{\caG}(\sigma_1)$ and $\caG_{v_2}=\Stab_{\caG}(\sigma_2)$.
	Moreover, the graph $F'=(F'^0,F'^1)$ given by 
	\begin{equation}
		F'^0=U_1\sqcup U_2,\quad F'^1=\{\, e\in F^1 \mid o(e), t(e)\in {F'}^0\,\}
	\end{equation}
	is the representation of $\caG(T)$ on $F$. 
	For $k\ge 2$, $I=\{1,\dots,k\}$, we define the graph with $k$ vertices $T_k=(T_k^0, T_k^1)$ by 
	\begin{equation}
	\begin{aligned}
	T_k^0&=\{\,v_1, \dots, v_k\,\}\\
	T_k^1&=\big\{\, \{v_i,v_j\}\in T^0\times T^0 \mid  v_i\in\image(\sigma_{m_j(v_j)}),\,
	i\ne j,  i,j=1,\dots, k\,\big\},
	\end{aligned}
	\end{equation}
	where $v_{i}\in\image(\sigma_{i})$ is adjacent to $v_{j}\in\image( \sigma_{m_{i}(v_i)})$
	and $m_i$ is the function defined in Remark \ref{rem:mn}.
	Then for each $k\ge 2$ one has that 
	\begin{equation}
	\begin{aligned}
	T_k^0&= T_{k-1}^0\sqcup\{v_k\}\\
	T_k^1&= T_{k-1}^1\sqcup\{\,e\in T^1\mid o(e)=v_k \mbox{ or } t(e)=v_k\,\}.
	\end{aligned}
	\end{equation}
	Thus, for $k\ge 1$, one has that $T_k$ is a tree.
%	 by Lemma \ref{lem:treevertex}.
	Finally, we put 
	\begin{equation}
	\label{eq:T}
	T=\bigcup_{i\in I} T_i.
	\end{equation}
	We associate to each $v_i\in T^0$ the groupoid 
	\begin{equation}
	\label{eq:vertexgpd}
	G_{v_i}=\Stab_{\caG}(\sigma_i)
	\end{equation}
	Moreover, for $e\in T^1$ we define the partial section 
	$\sigma_e\colon A\subseteq X \to F^1$ by putting $A=\dom(\sigma_{o(e)})\cap\dom(\sigma_{t(e)})$
	and
	%%	\marginpar{\textcolor{blue}{can I just say %$G_e=\Stab_\caG(\sigma_{o(e)})\cap\Stab_\caG(\sigma_{t(e)})$?}}
	\begin{equation}
	\sigma_e(x)=f, 
	\end{equation}
	where $f$ is the unique $f\in F^1$ such that $o(f)\in\sigma_{o(e)}(x), \,t(f)\in\sigma_{t(e)}(x)$, 
	$x\in A$.
	%	\begin{equation}
	%		\sigma_e(x)=\{\, f\in F^1 \mid o(f)\in\sigma_{o(e)}(x), \,t(f)\in\sigma_{t(e)}(x)\,\},\quad x\in A.
	%	\end{equation}
	Thus, we associate to each $e\in T^1$ the groupoid 
    \begin{equation}
	\label{eq:edgegpd}
	G_e=\Stab_{\caG}(\sigma_e).
	\end{equation}
	Finally, we put
	\begin{equation}
		F'^0=V ,\quad F'^1=\{\,e\in F^1\mid o(e),t(e)\in F'^{0}\, \}.
    \end{equation} 
    Since $\tilde{V}=F^0$, one has that the graph $F'=(F'^0,F'^1)$ is the representation of a tree of representatives of the action of $\caG$ on $F$.
    By construction, if $v,w\in F^0$ are in the same orbit, then either $v\in F'^0$ or $w\in F'^0$.
\end{proof}

Note that by definition of $T$, one has an orientation $T_+^1\subseteq T^1$ of $T$.

\begin{rem}
	Let $\caG$ be a groupoid acting on a forest $F$. Then there exists a subforest $F'$ of $F$ 
	such that the vertices of $F'$ are a fundamental domain for the action of $\caG$ on $F^0$.
	In particular, the vertices of $F'$ are a fundamental domain for the equivalence relation $\caR_{\caG}$ generated by the action of $\caG$ on $F^0$. 
	Then the restriction $\pi|_{F'}\colon F'\to Q(\caG,F)$
	is injective on the vertex set
	and its image $\pi|_{F'}(F')$ is a maximal subforest of $Q(\caG,F)$, i.e.,
	$\pi|_{F'^0}(F'^0)=Q(\caG,F)^0$.
\end{rem}

In classical Bass-Serre theory, given a group acting without inversion on a tree, one defines a tree of groups by considering 
a maximal subtree of the quotient graph and the stabilizers of its vertices and edges.
A tree of representatives has the same role in this context.
However, the tree of representatives $(\caG(T),\chi)$ associated to a groupoid action on a forest $F$
does not contain all the information we need: 
in general, 
%the saturations (with respect to $\caR_{\caG}$)
the $\caG$-orbits of the $\chi(G_e)$'s, $e\in T^1$, do not cover $F^1$.
Thus, we need the following definition.

\begin{defi}
	\label{def:desing}
	Let $\caG$ be a groupoid acting on a forest $F$. 
	A {\it desingularization} $\mathfrak{D}(\caG,F)$ of the action of $\caG$ on $F$ 
	(or a desingularization of $\caG\qgraph F$)
	consists of 
	%	\begin{equation*}
	%		\mathfrak{D}(\caG,F)=\Big(\,\caG(\Gamma),\, T,\, \chi\,\Big)
	%	\end{equation*}
	\begin{itemize}
		\itemsep0em
		\item[(D1)] a connected graph $\Gamma$ with orientation $\Gamma_+^1$;
		\item[(D2)] a graph of groupoids $\caG(\Gamma)$ based on $\Gamma$;
		\item[(D3)] a maximal rooted subtree $T\subseteq\Gamma$;
		\item[(D4)]	a tree of representatives $(\caG(T),\chi)$;
		\item[(D5)] a partial section $\sigma_e\colon X_e\subseteq\caG^{(0)}\to F^1$  
		together with a family of elements $g_e=\big\{\,g_{e,x}\,\big\}_{x\in\varphi(\image(\sigma_e))}\subseteq\caG$ 
		for any $e\in\Gamma_+^1\setminus T^1$;
		%	\item[(D5)] a family of elements $g_e=\big\{\,g_{e,x}\,\big\}_{x\in\varphi(\image(\sigma_e))}\subseteq\caG$ for 	any $e\in\Gamma_+^1\setminus T^1$;
	\end{itemize}
	satisfying the following properties:
	\begin{itemize}
		\item[(i)] the tree of groupoids $(\caG(T),\chi)$ induced on $T$ is a tree of representatives for
		the action of $\caG$ on $F$ where $\varphi\big(\,\chi\big(T_r^{(0)}\big)\,\big)=\caG^{(0)}$ and $r\in T^0$ is the root of $T$;
		\item[(ii)] for each $e\in\Gamma_+^1\setminus T^1$ and 
		for $\eps\in\image(\sigma_e)$ one has that $o(\eps)\in\chi\big(\caG_{o(e)}^{(0)}\big)$
		and $g_{e,\varphi(t(\eps))}\cdot t(\eps)\in\chi\big(\,\caG_{t(e)}^{(0)}\,\big)$;
		%		\item[(ii)] for each $e\in\Gamma_+^1\setminus T^1$ there exists a partial section 
		%		$\sigma_e\colon X_e\subseteq\caG^{(0)}\to F^1$ 
		%		such that 
		%		for $\eps\in\image(\sigma_e)$ one has $o(\eps)\in\chi\big(\caG_{o(e)}^{(0)}\big)$
		%		and $g_{e,\varphi(t(\eps))}\cdot t(\eps)\in\chi\big(\,\caG_{t(e)}^{(0)}\,\big)$;
		%		%		i.e., there exists $g_\eps\in\caG$ such that $g_\eps\cdot 
		% 		t(\eps)\in\chi\big(\caG_{t(e)}^{(0)}\big)$;
		\item[(iii)] for each $e\in\Gamma_+^1\setminus T^1$, 
		%		$\chi$ induces morphisms of groupoids
		the maps
		\begin{align}
		\alpha_{\bar{e}}&\colon\caG_e\to \caG_{o(e)},\notag\\
		\alpha_e&\colon\caG_e\to\caG_{t(e)}\notag
		\end{align}	
		are given by inclusion and conjugation by $g_e$ (cf. Definition \ref{def:conj}) composed with inclusion, respectively;
		%		\begin{align}
		%		\alpha_{\bar{e}}&\colon\Stab_\caG(\sigma_e)\to \caG_{o(e)},\notag\\
		%		\alpha_e&\colon\Stab_\caG(\sigma_e)\to g_{e}^{-1}\,\Stab_\caG(\sigma_{t(e)})\,g_{e}\to \caG_{t(e)},\notag
		%		\end{align}	
		%		where the map $\Stab_\caG(\sigma_e)\to g_{e}^{-1}\,\Stab_\caG(\sigma_{t(e)})\,g_{e}$ is given by conjugation;
		\item[(iv)] the saturations with respect to $\caR_{\caG}$
		of the $\chi(\caG_e)$'s, $e\in\Gamma^1$, 
		form a partition of $F^1$.
	\end{itemize}
\end{defi}

We prove that there exists a desingularization for any groupoid action without inversion on a forest.
This result is analogous to the construction of a graph of groups 
associated to a group action without inversion on a tree.
The main difference  is that in the classical Bass-Serre theory the underlying graph $\Lambda$ of  
the graph of groups $G(\Lambda)$
associated to a group action without inversion on a tree $T$ is the quotient graph $\Lambda=G\qgraph T$,
while in this context there
is no canonical graph underlying the desingularization.

\begin{thm}
	\label{thm:desingular}
	Let $\caG$ be a groupoid acting on a forest $F$. 
	Then there exists a desingularization of the action of $\caG$ on $F$.
\end{thm}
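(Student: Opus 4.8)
The plan is to build a desingularization by enriching the tree of representatives furnished by Proposition~\ref{prop:arboretumrep} with a finite or countable family of \emph{extra} edges accounting for the edge-orbits of $\caG$ on $F^1$ that escape the fundamental domain of vertices. First I would apply Proposition~\ref{prop:arboretumrep} to obtain a tree of representatives $(\caG(T),\chi)$, based on a rooted tree $T$ with root $r$, such that $\varphi\big(\chi(\caG_r^{(0)})\big)=\caG^{(0)}$. This already yields items (D3) and (D4) of Definition~\ref{def:desing}, and property (i) is immediate. I would then set $\Gamma^0=T^0$ and build $\Gamma$ by adjoining edges to $T$ only; since $T$ is connected and no new vertices are introduced, $\Gamma$ is connected and inherits an orientation $\Gamma_+^1\supseteq T_+^1$, which gives (D1).

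Next I would isolate the edge-orbits not yet represented. Writing $V=\bigsqcup_i U_i$ for the fundamental domain of vertices coming from $\chi$, each vertex of $V$ determines a vertex of $T$ (the index $i$ with the vertex lying in $U_i=\image(\sigma_i)$). The tree-edges $e\in T^1$ contribute the edge-representatives $\chi(\caG_e)\subseteq {F'}^1$, and their $\caR_\caG$-saturations cover the orbits realized inside the representation of the tree. For every remaining edge-orbit I would select, using the enumeration of the fibers of $F^1$ (cf. Theorem~\ref{cl:parsec} and Remark~\ref{rem:mn}), a canonical representative edge $\eps$ with $o(\eps)\in V$; since $V$ is a fundamental domain, $t(\eps)$ is $\caG$-equivalent to a unique vertex of $V$, and I would fix $g_\eps\in\caG$ carrying $t(\eps)$ into $V$. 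Grouping these representatives by the pair of $T$-vertices they connect, I would introduce for each such pair an edge $e\in\Gamma_+^1\setminus T^1$ together with a partial section $\sigma_e\colon X_e\to F^1$ collecting the chosen $\eps$'s fiberwise, and a family $g_e=\{g_{e,x}\}_{x\in\varphi(\image(\sigma_e))}$ assembled from the $g_\eps$'s. By construction $o(\eps)\in\chi\big(\caG_{o(e)}^{(0)}\big)$ and $g_{e,\varphi(t(\eps))}\cdot t(\eps)\in\chi\big(\caG_{t(e)}^{(0)}\big)$, which is exactly property (ii), and this realizes (D5).

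To complete the graph of groupoids (D2) I would attach to each new edge the groupoid $\caG_e=\Stab_\caG(\sigma_e)$ as in~\eqref{eq:edgegpd}, define $\alpha_{\bar e}\colon\caG_e\to\caG_{o(e)}$ to be the inclusion $\Stab_\caG(\sigma_e)\hookrightarrow\Stab_\caG(\sigma_{o(e)})$ (legitimate because $o(\eps)\in V$, so any element stabilizing $\sigma_e$ stabilizes the origin section), and define $\alpha_e\colon\caG_e\to\caG_{t(e)}$ as conjugation by the family $g_e$ in the sense of Definition~\ref{def:conj} followed by inclusion; injectivity of $\alpha_e$ follows from Lemma~\ref{lem:injconj}. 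This gives property (iii). Verifying the Standing Assumption---that $\alpha_e(\caG_e)$ is a wide subgroupoid of $\caG_{t(e)}$---is a unit-space computation, since after conjugation the unit spaces match the prescribed $\Gamma_v=\caG_v^{(0)}$; discreteness of the vertex and edge groupoids is inherited from that of $\caG$.

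The hard part will be property (iv): that the $\caR_\caG$-saturations of the $\chi(\caG_e)$, $e\in\Gamma^1$, form a \emph{partition} of $F^1$. Coverage is built in, because every edge of $F^1$ can be moved so that its origin lands in $V$, and its orbit is then represented either inside ${F'}^1$ (a tree edge) or by one of the adjoined representatives $\eps$ (a new edge). The subtle point is disjointness: I would have to show that distinct edges $e\in\Gamma^1$ carry representatives lying in distinct $\caR_\caG$-classes, and in particular that no orbit is simultaneously captured by a tree edge and a new edge. I expect to deduce this from the minimality built into the fiberwise enumeration (Theorem~\ref{cl:parsec}, Remark~\ref{rem:mn}): the canonical representative of each orbit is uniquely determined, so two distinct edges cannot share an orbit. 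Keeping the bookkeeping consistent under edge-reversal---so that $e$ and $\bar e$ correspond to an edge and its reverse and the whole construction respects the orientation $\Gamma_+^1$---and correctly interleaving vertex-equivalence with edge-equivalence is where the argument is most delicate, and this is the step I anticipate as the main obstacle.
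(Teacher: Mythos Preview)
Your overall strategy matches the paper's: start from the tree of representatives $(\caG(T),\chi)$, keep $\Gamma^0=T^0$, and adjoin extra edges to account for the edge-orbits not captured by the tree; take vertex and edge groupoids as stabilizers, and define the monomorphisms by inclusion and by conjugation with the family $g_e$ (invoking Lemma~\ref{lem:injconj} for injectivity). All of this is exactly what the paper does.

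There is, however, a genuine gap in how you organize the extra edges. You propose to introduce \emph{one} new edge $e$ for each pair of $T$-vertices and to collect all the chosen representatives $\eps$ for that pair into a single partial section $\sigma_e$. This cannot work: over a fixed unit $x\in\caG^{(0)}$ there may be several distinct $\caR_\caG$-orbits of edges of $F$ whose origin lies in $\image(\sigma_v)$ and whose terminus lies in $S(\chi(\caG_w))$. (Already in the group case this is what produces multiple edges between the same pair of vertices in the quotient graph; e.g.\ $F_2$ acting on its Cayley tree yields two loops at the unique vertex.) A partial section, being a function on $X_e\subseteq\caG^{(0)}$, can record at most one such orbit per fiber, so your $\sigma_e$ is not well defined and the saturations of your $\chi(\caG_e)$'s cannot partition $F^1$.

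The paper's remedy is precisely the step you skip: for each ordered pair $(v,w)$ it forms
\[
C_{v,w}=\big\{\,\eps\in C\mid o(\eps)\in\image(\sigma_v),\ t(\eps)\in S(\chi(\caG_w))\,\big\},
\]
observes that $C_{v,w}$ is fibered over $\varphi\circ o(C_{v,w})\subseteq\caG^{(0)}$, and then applies Theorem~\ref{cl:parsec} to this fibered set to obtain a family $\{\sigma_i\}_{i\in I}$ of partial sections whose saturations partition $C_{v,w}$. Each $\sigma_i$ contributes its own edge $f_i(v,w)\in\Gamma_+^1\setminus T^1$. Once this is in place, property~(iv) is not the obstacle you anticipate: the pair $(v,w)$ is uniquely determined by any edge in a given orbit because the $S(\image(\sigma_v))$'s partition $F^0$, and within $C_{v,w}$ disjointness of the saturations is exactly the conclusion of Theorem~\ref{cl:parsec}; coverage holds since every $\eps\in C=F^1\setminus S$ can be moved by $\caG$ so that its origin lands in $V$, placing it in some $C_{v,w}$. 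The real work is getting the family of partial sections right, not the bookkeeping for~(iv).
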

\begin{proof}
	By Proposition \ref{prop:arboretumrep}, there exists a tree $T$ (see \eqref{eq:T}) and a tree of representatives
	$(\caG(T),\chi)$ such that $\varphi\big(\chi(\caG_r^{(0)})\big)=\caG^{(0)}$, 
	where $r\in T^0$ is the root of $T$.
	%	We fix an orientation $T_+^1$ of $T$.
	Let $F'=({F'}^0, {F'}^1)\subseteq F$ be the representation of $T^\mathrm{pi}$ on $F$.
	Then one has
	\begin{equation}
	F^0=\bigsqcup_{v\in T^0} \mu^0\big(\,\caG,\,\chi\big(\caG_v^{(0)}\big)\,\big),
	\end{equation}
	i.e., the $\caG$-orbits of the $\chi(\caG_v^{(0)})$'s, $v\in T^0$, form a partition of $F^0$.
	Let  
	\begin{equation}
	S=\bigsqcup_{e\in T^1} \mu^1\big(\,\caG, \,\chi\big(\caG_e^{(0)}\big)\,\big)
	\end{equation}
	i.e., $S$ is the union of the $\caG$-orbits of the $\chi(\caG_e)$'s, $e\in T^1$.
	Let $C=F^1\setminus S$.
	If $C=\emptyset$, then we put $\Gamma=T$ and $\caG(T)$ is the graph of groupoids required
	(cf. Proposition \ref{prop:arboretumrep}).
	Suppose that $C\ne\emptyset$ and let 
	\begin{equation}
	C'=\big\{\,e\in C\mid o(e)\in {F'}^0\,\big\}.
	\end{equation}
	Then $C'\ne\emptyset$ since the fibers of $F$ are connected graphs 
	and since the action of $\caG$ preserves the distances on $F^0$.
	Thus, we construct a family of partial sections of $C$ such that
	the saturations of their images
	form a partition of $C$.
	Let $v\in T^0$ and let $\sigma_v$ be the partial section associated to $v$.
	For all $w\in T^0$, let 
	\begin{equation}
	C_{v,w}=\big\{\,e\in C\mid o(e)\in\image(\sigma_v),\, t(e)\in S\big(\chi(\caG_w)\big) \,\big\}.
	\end{equation}
	where $S(\chi(\caG_w))$ denotes the saturation of $\chi(\caG_w)$ with respect to $\caR_\caG$.
	If $C_{v,w}\ne\emptyset$, then it is fibered on $\varphi\circ o\, (C_{v,w})\subseteq\caG^{(0)}$,
	where $\varphi$ is the momentum map.
	Then by Theorem \ref{cl:parsec} there exists a family $\{\sigma_i\}_{i\in I}$, $I\subseteq\N$, of partial sections such that
	\begin{equation}
	C_{v,w}=\bigsqcup_{i\in I} \,S\big(\image(\sigma_i)\big)
	\end{equation}
	gives a partition of $C_{v,w}$.
	%	Each $\sigma_i$ gives rise to a new edge $f_i\in \Gamma^1$ 
	For each $i$ in the decomposition of $C_{v,w}$ we define a new edge $f_i(v,w)\in\Gamma^1$
	such that $o(f_i(v,w))=v$, $t(f_i(v,w))=w$.
	Thus, $\Gamma=(\Gamma^0,\Gamma^1)$
	is the graph given by 
	\begin{equation}
	\begin{aligned}
	\Gamma^0&=T^0,\\ 
	\Gamma^1&=T^1\sqcup\{\,f_i(v,w),\, \bar{f}_i(v,w) \mid i\in I, \,v,w\in T^0 \,\}.
	%		\sigma_i\in\Sigma(C_{v,w})
	\end{aligned}
	\end{equation}
	We put
	\begin{equation}
	\label{eq:ups+-}
	\Upsilon_+=\{\,f_i(v,w) \mid i\in I,\, v,w\in T^0\,\},\quad 
	\Upsilon_-=\{\,\bar{f}\mid f\in\Upsilon_+\,\},
	\end{equation}
	and $\Gamma_+^1= T_+^1\sqcup \Upsilon_+$.
	Thus, we have (D1) and (D3). 
	It remains to construct (D2), (D4), (D5).
	For $v\in T^0$ and $e\in T^1$, we put $\caG_v=\Stab_\caG(\sigma_v)$
	and $\caG_e=\Stab_{\caG}(\sigma_e)$ as in \eqref{eq:vertexgpd}
	and \eqref{eq:edgegpd}, respectively.
	For $f_i\in \Upsilon_+$, 
	we associate to $f_i$ the groupoid 
	\begin{equation}
	\caG_{f_i}=\caG_{\bar{f}_i}= \Stab_{\caG}(\sigma_i).
	\end{equation}
	%	given by the restriction of the action of $\caG$ to $\image(\sigma_f)$.
	Then $\caG_{f_i}$ injects naturally into $\caG_v$, i.e., 
	\begin{equation}
	\alpha_{\bar{f_i}}\colon
	\caG_{f_i}=\Stab_{\caG}(\sigma_{i})\hookrightarrow \caG_v=\Stab_{\caG}(\sigma_v)
	\end{equation}
	is given by inclusion.
	Moreover, by construction one has that for $\eps_i\in\image(\sigma_i)$,
	there exists $g_{f_i,\varphi(t(\eps_i))}\in \caG$ such that 
	\begin{equation}
	\mu^0\big(\,g_{f_i,\varphi(t(\eps_i))}, t(\eps_i)\,\big)\in {F'}^0.
	\end{equation}
	Let $\caG_w=\Stab_{\caG}(\sigma_w)$ as in \eqref{eq:vertexgpd}.
	Then the groupoid $\caG_{f_i}$ injects into $\caG_w$ via the conjugation by 
	$g_{f_i}=\{\,g_{f_i,\varphi(t(\eps_i))}\,\}_{\eps_i\in\image(\sigma_i)}$ as follows.
	Put $\sigma_\tau:=t\circ\sigma_{i}\colon\dom(\sigma_i)\to F^0$. 
	Then $\caG_{f_i}=\Stab_{\caG}(\sigma_{i})$ injects naturally into 
	$\Stab_{\caG}(\sigma_\tau)$,
	i.e., the map 
	$\iota\colon\caG_{f_i} \hookrightarrow \Stab_{\caG}(\sigma_\tau)$ is given by inclusion.
	Since the subgroupoid $\Stab_\caG(\sigma_\tau)\subseteq\caG$ and $\caG_w\subseteq\caG$
	have the same unit space,
	the equivalence of categories $\Stab_\caG(\sigma_\tau)\to\caG_w$ in Definition \ref{def:conj}
	is an identity on the unit space $\Stab_\caG(\sigma_\tau)^{(0)}$, 
	and the conjugation map is given by
	\begin{equation}
	\begin{aligned}
	i_{g_{f_i}}\colon\Stab(\sigma_\tau)&\to\caG_w\\
	\gamma&\mapsto g_{f_i,r(\gamma)}\,\gamma\,g_{f_i,s(\gamma)}^{-1}.
	\end{aligned}
	\end{equation}
	Since the identity functor is an injective equivalence of categories, by Lemma \ref{lem:injconj}
	one has that $i_{g_{f_i}}$ is injective. Hence, the composition
	\begin{equation}
	\label{eq:alphaconj}
	\alpha_{f_i}\colon \caG_{f_i}\xrightarrow[]{\iota}\Stab_{\caG}(\sigma_\tau)
	\xrightarrow[]{i_{g_{f_i}}} \caG_w,
	\end{equation}
	where the first map is given by inclusion and the second map by conjugation by $g_{f_i}$,
	is an injective homomorphism of groupoids.
	Thus, we have (D2), (D4) and (D5).
\end{proof}

%%%%%----------------------------------------
%%%%%  FUNDAMENTAL GROUPOID OF A GRAPH OF GROUPOIDS 
%%%%%----------------------------------------

\section{The fundamental groupoid of a graph of groupoids}
\label{s:fundgpd1}

In the previous section we have proved that given a groupoid $\caG$ acting without inversion of edges
on a forest $F$, there exists a desingularization 
$\mathfrak{D} (\caG,F)$ 
associated to the action of $\caG$ on $F$ (cf. Theorem \ref{thm:desingular}),
and hence a graph of groupoids associated to the action of $\caG$ on $F$.
We now want to prove that it is possible to reconstruct $\caG$ in terms 
of the vertex end edge groupoids of the graph of groupoids  $\caG(\Gamma)$.
In particular, we will prove that $\caG$ is isomorphic to the 
fundamental groupoid of the graph of groupoids $\caG(\Gamma)$.

\subsection{The fundamental groupoid of a graph of groupoids}
\label{ss:fundgpd}
Let $\caG(\Gamma)$ be a graph of groupoids and let $T\subseteq\Gamma$
be a maximal rooted subtree.
Let $\caH_e=\alpha_e(\caG_e)$ and let $\phi_e\colon\caH_{\bar{e}}\to\caH_e$ be the map
given by $\phi_e(g)=\alpha_e(\alpha_{\bar{e}}^{-1}(g))$.
Since the groupoids 
$\caH_e$ are wide subgroupoids of $\caG_{t(e)}$ for all $e\in\Gamma^1$,
the map 
\begin{equation}
\phi_e^0=\phi_e|_{\caG_{o(e)}^{(0)}}\colon \caG_{o(e)}^{(0)}\to\caG_{t(e)}^{(0)}
\end{equation}
is a bijection for all $e\in\Gamma^1$.
Then the maps $\phi_e^0$'s induce a rooted tree of partial isomorphisms $T^{\mathrm{pi}}$
which defines an equivalence relation $\sim$ (see \eqref{eq:relpi}) on 
$Y=\sqcup_{v\in\Gamma^0} \,\caG_v^{(0)}$.
Let $\tilde{Y}=Y/\sim$ and $\pi\colon Y\to \tilde{Y}$ be the canonical projection.
Let $r\in T^0$ be the root of the tree $T$. 
Then 
\begin{equation}
\label{eq:X}
X=\caG_r^{(0)}
\end{equation}
is a fundamental domain for the equivalence relation $\sim$
and the map
\begin{equation}
\tau_0:=\pi|_X^{-1}\colon\tilde{Y}\to X
\end{equation}
is bijective. Let 
\begin{equation}
\tau=\tau_0\circ\pi\colon Y\to X 
\end{equation}
the map assigning to each $y\in Y$ the representative
$x\in X$ such that $y\sim x$.

\begin{defi}
	The {\it fundamental groupoid} $\pi_1(\caG(\Gamma))$ of the
	graph of groupoids $\caG(\Gamma)$
	is the groupoid defined as follows (see \cite[\S 3]{hig64} for presentation of groupoids).
	It has unit space $X$ defined in \eqref{eq:X} and generators
	\begin{equation}
	\label{eq:genpi}
	\Bigg(\bigsqcup_{v\in\Gamma^0} \,\caG_v\Bigg)\,\sqcup\,
	\Bigg(\,\bigsqcup_{\substack{e\in\Gamma^1,\\
			x\in\caG_{o(e)}^{(0)}}} (\phi_e(x),x)\,\Bigg).
	\end{equation}
	Note that for $z\in \caG_e^{(0)}$ one has that 
	$(\alpha_e (z), \alpha_{\bar{e}}(z))$ is a generator, since
	$\phi_e(\alpha_{\bar{e}}(z))=\alpha_e\circ\alpha_{\bar{e}}^{-1} (\alpha_{\bar{e}}(z))=\alpha_e(z)$.
	The range and source maps $r,s\colon\pi_1(\caG(\Gamma))\to X$ are defined by 
	\begin{align}
	r(g)&=\tau(r'(g)), \quad r((\phi_e(x),x))=\tau(\phi_e(x)),\\
	s(g)&=\tau(s'(g)), \quad s((\phi_e(x),x))=\tau(x),
	\end{align}
	where $r'$ and $s'$ denotes the range and source maps in $\caG_v$, $v\in\Gamma^0$,
	and multiplication is given by concatenation.
	The inverse map is given by the inverse map in $\caG_v$ for elements $g\in\caG_v$,
	$v\in\Gamma^0$, and by $(\phi_e(x),x)^{-1}=(x,\phi_e(x))$ for $e\in\Gamma^1$, $x\in\caG_{o(e)}^{(0)}$.
	Moreover, the generators are subject to the relations in the vertex groupoids together with relations
	\begin{align}
	\label{eq:R1}
	&(\phi_e(x),x)=\iid_{\tau(x)} \mbox{ for } e\in T^1, \,x\in\caG_{o(e)}^{(0)};\tag{R1}\\
	&\big(\alpha_e(r(g)), \alpha_{\bar{e}}(r(g))\big)\cdot \alpha_{\bar{e}}(g)\cdot
	\big(\alpha_{\bar{e}}(s(g)),\alpha_e(s(g))\big)
	=\alpha_e(g) \mbox{ for }g\in\caG_e, e\in\Gamma^1.\tag{R2}
	\end{align}
	%	for all $g\in\caG_e$, $e\in\Gamma^1$.
\end{defi}

\begin{defi}
	We call the {\it free groupoid} $\caF(\caG(\Gamma))$ the groupoid with the same 
	generators
	as $\pi_1(\caG(\Gamma))$, i.e., the generators in \eqref{eq:genpi}, subject to the relations in the vertex groupoids together with the relation (R2).
\end{defi}

\begin{notation}
	In order to simplify calculations with fundamental groupoids elements, 
	we will use the following notation:
	\begin{equation*}
	\phi_e(x) e=ex:=(\phi_e(x),x),
	\end{equation*}
	for $x\in\caG_{o(e)}$, $e\in\Gamma^1$.
	Thus, we may rewrite relations (R1) and (R2) as follows:
	\begin{align}
	&	\phi_e(x) e=ex=\iid_{\tau(x)},\quad \mbox{ for }x\in\caG_{o(e)}, \,e\in T^1\tag{R1}.\\
	&e\,\alpha_{\bar{e}}(r(g)) \cdot \alpha_{\bar{e}}(g)\cdot \alpha_{\bar{e}}(s(g))\,\bar{e}=\alpha_e(g)
	\quad \mbox{ for } g\in\caG_e,\, e\in\Gamma^1.\tag{R2}
	\end{align}
	Since for any groupoid $G$ and any morphism $\gamma\in G$ one has  $r(\gamma)\gamma=\gamma=\gamma s(\gamma)$, we put
	\begin{equation}
	g_1 e g_2= g_1\cdot s(g_1) \,e\, r(g_2)\cdot g_2
	\end{equation}
	for $g_1\in\caG_{t(e)}$, $g_2\in\caG_{o(e)}$, $s(g_1)=\phi_e(r(g_2))$.
	With this notation, (R2) becomes
	\begin{equation}
	\label{eq:reledges}
	\tag{R2}
	e \, \alpha_{\bar{e}}(g)\,\bar{e}=\alpha_e(g),
	\end{equation}
	for $g\in\caG_e, e\in\Gamma^1$.
\end{notation}

We can extend our definition of a path in $\Gamma$ as follows.

\begin{defi}
	A {\it graph of groupoids word } $w$ in $\caG(\Gamma)$ is a sequence of elements 
	\begin{equation}
	w=g_1 e_1g_2e_2\cdots g_ne_n g_{n+1},
	\end{equation}
	where 
	\begin{itemize}
		\itemsep0em
		\item $e_i\in\Gamma^1$ for all $i=1,\dots,n$;
		\item $o(e_i)=t(e_{i+1})$ for all $i=1,\dots,n-1$;
		\item $g_i\in\caG_{t(e_i)}$ for all $i=1,\dots,n$ and $g_{n+1}\in\caG_{o(e_n)}$;
		\item for all $i=1,\dots,n$ one has $\,\phi_{\bar{e}_i}\big(s(g_i)\big)=r(g_{i+1})$.
	\end{itemize}	
	We say that $w$ has {\it length} $n$. A {\it subword} of $w$ is a sequence
	$g_i e_i\cdots g_{i+k}e_{i+k}$ such that $i\ge 1$, $k\ge 0$ and $i+k\le n$.
\end{defi}

\begin{rem}
	\label{rem:leftransv}
	For all $e\in\Gamma^1$ we choose a {\it left transversal} $\caT_e$ of $\alpha_e(\caG_e)$
	in $\caG_{t(e)}$ containing the identity elements of $\caG_{t(e)}$, i.e., 
	for all $g\in\caG_{t(e)}$ there exist $\tau=\tau(g,e)\in\caT_e$ and 
	$h=h(g,e)\in\caG_{e}$ such that
	\begin{equation}
	\label{eq:transg}
	g=\tau\alpha_e(h).
	\end{equation}
	Then we may represent a graph of groupoids word $w$ by 
	\begin{equation}
	w= g_1 e_1g_2e_2\cdots g_ne_n g_{n+1},
	\end{equation}
	where 
	\begin{itemize}
		\itemsep0em
		\item $e_i\in\Gamma^1$ for all $i=1,\dots,n$;
		\item $t(e_i)=o(e_{i-1})$ for all $i=2,\dots,n$;
		\item $g_i\in\caG_{t(e_i)}$ for all $i=1,\dots,n$ and $g_{n+1}\in\caG_{o(e_n)}$;
		\item for all $i=1,\dots,n$, subwords $g_i e_i g_{i+1}= \tau_i \alpha_{e_i}(h_i) \,e_i \,
		\tau_{i+1}\alpha_{e_{i+1}}(h_{i+1})$
		must satisfy
		\begin{equation*}
		s(\alpha_{\bar{e}_i}(h_i))=r(\tau_{i+1}).
		\end{equation*}
	\end{itemize}
\end{rem}

\begin{defi}
	A graph of groupoids word 
	$w=g_1 e_1\cdots g_n e_n g_{n+1}$ is said to be 
	{\it reduced} if
	\begin{itemize}
		\itemsep0em
		\item $g_i\in\caT_{e_i}$ for all $i=1,\dots,n$ and $g_{n+1}\in\caG_{o(e_n)}$;
		\item if $\bar{e}_i=e_{i-1}$ for some $i=2,\dots,n$, then
		$g_i\notin\image(\alpha_{e_i})$.
	\end{itemize}
	In particular, if $e_1\cdots e_n$ is a path in $\Gamma$ without backtracking, 
	then $w$ is reduced.
\end{defi}

We define the {\it reduction} of words by using the relations (R2).
%\begin{equation}
%\label{eq:reledges}
%	e\,\alpha_{\bar{e}}(h)\,\bar{e}=\alpha_e(h)
%\end{equation}
%for all $e\in\Gamma^1, h\in\caG_e$.
There are two types of reduction: coset and length.

\begin{defi}
	\label{def:cosred}
	Let $u=geg'$ be a subword of a graph of groupoids word $w$,
	where $g=\tau\alpha_e(h)$ as in \eqref{eq:transg}.
	The {\it coset reduction} $w'$ is the graph of groupoids word obtained from $w$ by replacing 
	$u$ by $v=\tau e g''$, where $g''=\alpha_{\bar{e}}(h)g'$.
\end{defi}
\begin{defi}
	\label{def:lenred}
	Let $u=eg\bar{e}$, $g\in\caH_{\bar{e}}$, be a subword of a graph of groupoids word $w$.
	The {\it length reduction} $w'$ is the graph of groupoids word obtained from $w$ by replacing 
	$u$ by $\phi_e(g)$. 
	Note that when length reduction is applied to a graph of groupoids word, 
	its length is decreased by two.
	Two graph of groupoids words $w$ and $w'$ are said to be {\it equivalent}
	if one can get one from the other by using coset and length reductions.
\end{defi}

Hence a reduced graph of groupoids word is a word in which no coset and no length reduction 
can be applied. 
The reduction process removes subwords $eg\bar{e}$, where $g\in\caH_{\bar{e}}$, 
and moves $\alpha_{e}(h)$ in the decomposition $\tau\alpha_{e}(h)$ to the right.
We prove that $p\in\pi_1(\caG(\Gamma))$ is represented uniquely 
by a reduced graph of groupoids word 
$p=g_1 e_1 g_2 e_2\cdots g_ne_n g_{n+1}$.

%\marginpar{Here I cut the proof out because it's too long}
By adapting the proofs of \cite[\S 3]{hig:fundgpd} and \cite[Theorem 2.1.7]{moore}, 
one shows the following property.

\begin{thm}
	\label{thm:uniqreducedword}
	Every morphism of $\caF(\caG(\Gamma))$ is represented by a unique reduced
	graph of groupoids word.
\end{thm}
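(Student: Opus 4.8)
The plan is to establish existence and uniqueness of the reduced form separately, the uniqueness being the substantial part and handled by a van der Waerden-type action on the set of reduced words.

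For existence, I would begin with an arbitrary graph of groupoids word $w$ representing a given morphism of $\caF(\caG(\Gamma))$ and apply coset and length reductions (Definitions \ref{def:cosred} and \ref{def:lenred}), both of which leave the represented morphism unchanged since they are instances of the defining relations. A length reduction strictly decreases the length of a word by two, whereas a coset reduction preserves the length but places the leading vertex element of a subword $g\,e\,g'$ with $g=\tau\,\alpha_e(h)$ into the chosen transversal $\caT_e$, transferring $\alpha_{\bar{e}}(h)$ to the right. Scanning left to right, one normalizes each $g_i$ into $\caT_{e_i}$ by a coset reduction and deletes any backtracking subword $e_i\,g_{i+1}\,\bar{e}_i$ with $g_{i+1}\in\image(\alpha_{e_i})$ by a length reduction. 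Since the length is a nonnegative integer that never increases and drops whenever a length reduction is applied, and since each coset reduction normalizes one factor, this process terminates in a word satisfying both clauses of the definition of reduced. This yields existence.

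For uniqueness I would use the van der Waerden trick. Let $\Omega$ be the set of all reduced graph of groupoids words, fibered over $X$ by their source unit, and let $\mathbf{1}_x\in\Omega$ denote the empty word at a unit $x\in X$. The strategy is to define a left action of the generators of $\caF(\caG(\Gamma))$ on $\Omega$ by prepending and renormalizing, to verify that this assignment respects the defining relations so that it factors through an action of $\caF(\caG(\Gamma))$ on $\Omega$, and finally to observe that the action recovers a reduced word from the empty word. Concretely, for $a\in\caG_v$ and a reduced word $w=g_1\,e_1\cdots g_n\,e_n\,g_{n+1}$ with $t(e_1)=v$ and $r(g_1)=s(a)$, I set $a\cdot w$ to be the word obtained from $a\,g_1\,e_1\cdots$ by a single coset reduction normalizing $a g_1=\tau\,\alpha_{e_1}(h)$ into $\caT_{e_1}$. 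For an edge letter $e$ with $o(e)=t(e_1)$ I distinguish cases: if $e_1\neq\bar{e}$, then $e\cdot w$ is already reduced; if $e_1=\bar{e}$ and $g_1\in\image(\alpha_{\bar{e}})=\caH_{\bar{e}}$, then a length reduction via (R2) replaces the initial segment $e\,g_1\,\bar{e}$ by $\phi_e(g_1)\in\caG_{t(e)}$ and I continue as in the vertex case; the remaining subcase $g_1\notin\caH_{\bar{e}}$ is handled by normalizing $g_1$ into $\caT_{\bar{e}}$. In each case one checks that the output is reduced and that the source and range units are as required.

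The heart of the argument, and the step I expect to be the main obstacle, is verifying that this assignment is compatible with the relations: the multiplicative relations inside each vertex groupoid $\caG_v$ and, most delicately, relation (R2), $e\,\alpha_{\bar{e}}(g)\,\bar{e}=\alpha_e(g)$ for $g\in\caG_e$. This requires a finite but intricate case analysis showing that prepending $e$, then $\alpha_{\bar{e}}(g)$, then $\bar{e}$ to an arbitrary reduced word and renormalizing at each stage produces the same reduced word as prepending $\alpha_e(g)$ and renormalizing, consistently across all the subcases above. Once compatibility with all relations is confirmed, the presentation of $\caF(\caG(\Gamma))$ yields a genuine action of $\caF(\caG(\Gamma))$ on $\Omega$. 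Finally, an induction on length shows that the morphism represented by a reduced word $w$ sends $\mathbf{1}_{s(w)}$ to $w$ itself, each generator contributing its letter without triggering a further reduction. Hence two reduced words representing the same morphism act identically on $\Omega$ and so coincide, which proves uniqueness and completes the argument, adapting the classical treatments of \cite{hig:fundgpd} and \cite{moore}.
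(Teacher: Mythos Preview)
Your proposal is correct and follows exactly the route the paper indicates: the paper does not give a proof of Theorem~\ref{thm:uniqreducedword} at all but simply states that it follows by adapting \cite[\S 3]{hig:fundgpd} and \cite[Theorem~2.1.7]{moore}, which is precisely the van der Waerden action on reduced words that you have sketched. Your outline is a faithful and appropriately detailed rendering of that adaptation, so there is nothing to add.
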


\begin{defi}
	A graph of groupoids word $g=g_1 e_1 g_2 e_2\cdots g_n e_n g_{n+1}$
	is said to be reduced {\it in the sense of Serre} if it satisfies the following:
	if $n=0$, then $g_1$ is not a unit;
	if $n>0$ and $e_{i+1}=\bar{e}_i$, then $g_{i+1}\notin\caH_{\bar{e}_i}$.
\end{defi}
One has the following analogue of Britton's Lemma.

\begin{lem}[Britton's Lemma]
	\label{lem:brit}
	Let $\caG(\Gamma)$ be a graph of groupoids based on $\Gamma$ and let 
	$p=g_1e_1g_2\cdots g_n e_n g_{n+1}$ be a graph of groupoids word
	which is reduced in the sense of Serre.
	Then $p$ is not a unit.
\end{lem}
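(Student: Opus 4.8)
The plan is to reduce $p$, using coset reductions only, to a word of the same length $n$ that is reduced in the strict sense (each $g_i$ lying in the chosen transversal $\caT_{e_i}$, and no pinch of the form $e_i=\bar e_{i-1}$ with $g_i\in\image(\alpha_{e_i})$), and then to appeal to the uniqueness of reduced words in $\caF(\caG(\Gamma))$ (Theorem~\ref{thm:uniqreducedword}). Since a unit of $\caF(\caG(\Gamma))$ is represented by a reduced word of length $0$, namely a single vertex unit, it then suffices to exhibit a reduced representative of $p$ of positive length. The case $n=0$ is immediate: then $p=g_1$ is itself a reduced word of length $0$, hence its own unique reduced representative by Theorem~\ref{thm:uniqreducedword}, and by hypothesis $g_1$ is not a unit, so $p$ is not a unit. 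Thus I assume $n>0$.

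First I would run coset reductions (Definition~\ref{def:cosred}) from left to right. Writing the leftmost factor as $g_1=\tau_1\alpha_{e_1}(h_1)$ with $\tau_1\in\caT_{e_1}$ and $h_1\in\caG_{e_1}$, the coset reduction replaces $g_1e_1g_2$ by $\tau_1 e_1 g_2'$ with $g_2'=\alpha_{\bar e_1}(h_1)\,g_2$; iterating, at the $i$-th step the already modified factor $g_i$ is decomposed as $g_i=\tau_i\alpha_{e_i}(h_i)$, the representative $\tau_i\in\caT_{e_i}$ is kept in place, and $g_{i+1}$ is replaced by $\alpha_{\bar e_i}(h_i)\,g_{i+1}$. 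Each such step is an instance of (R2), in the form $\alpha_{e_i}(h_i)\,e_i=e_i\,\alpha_{\bar e_i}(h_i)$, so it preserves the represented morphism and leaves the length $n$ unchanged. After $n$ steps one obtains a word $p'=\tau_1e_1\cdots\tau_n e_n g_{n+1}'$ with $\tau_i\in\caT_{e_i}$ for all $i$, representing the same element of $\caF(\caG(\Gamma))$ as $p$.

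The heart of the argument, and the step I expect to be the main obstacle, is checking that $p'$ is reduced, i.e. that whenever $e_i=\bar e_{i-1}$ one has $\tau_i\notin\image(\alpha_{e_i})$; this is precisely where the hypothesis that $p$ is reduced in the sense of Serre must be used. Fix such an index $i$. Before its own decomposition, the factor $g_i$ has undergone exactly one modification, a single left multiplication by $\alpha_{\bar e_{i-1}}(h_{i-1})$. Since $e_i=\bar e_{i-1}$, that is $\bar e_{i-1}=e_i$, this factor lies in $\image(\alpha_{e_i})$, which is a subgroupoid of $\caG_{t(e_i)}$. By the Serre-reduced hypothesis the original factor satisfies $g_i\notin\caH_{\bar e_{i-1}}=\image(\alpha_{e_i})$, and left multiplication by an element of the subgroupoid $\image(\alpha_{e_i})$ cannot bring it inside; hence the modified $g_i$ is still outside $\image(\alpha_{e_i})$. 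Writing the modified factor as $\tau_i\alpha_{e_i}(h_i)$ with $\alpha_{e_i}(h_i)\in\image(\alpha_{e_i})$, if $\tau_i$ lay in $\image(\alpha_{e_i})$ then so would $\tau_i\alpha_{e_i}(h_i)$, a contradiction; therefore $\tau_i\notin\image(\alpha_{e_i})$, as required.

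Consequently $p'$ is a reduced graph of groupoids word of length $n>0$ representing the same morphism as $p$. By Theorem~\ref{thm:uniqreducedword} it is the unique reduced word of that morphism. A unit of $\caF(\caG(\Gamma))$ has as its unique reduced representative a word of length $0$, so, $p'$ having length $n>0$, the morphism $p$ cannot be a unit. This would complete the proof; the only delicate point is the propagation of the Serre-reduced condition through the coset-reduction pushes, handled in the previous paragraph.
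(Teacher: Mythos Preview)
Your proof is correct and follows essentially the same approach as the paper's: both perform left-to-right coset reductions to rewrite $p$ as $\tau_1 e_1 \cdots \tau_n e_n a$ with $\tau_i\in\caT_{e_i}$, verify that the Serre-reduced hypothesis forces $\tau_i\notin\image(\alpha_{e_i})$ whenever $e_i=\bar e_{i-1}$ (the paper phrases this as $\tau_{i+1}$ not being a unit, using that the transversal meets $\caH_{e_{i+1}}$ only in units), and then invoke Theorem~\ref{thm:uniqreducedword}. Your write-up of the propagation step is in fact more explicit than the paper's.
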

\begin{proof}
	The case $n=0$ is trivial. Thus, let $n>0$.
	For $i=1,\dots,n$ let $h_i\in\caH_{e_i}$ and $\tau_i\in\caT_{e_i}$ be such that
	\begin{align}
	g_i&=\tau_i h_i,\notag\\
	\phi_{\bar{e}_i}(h_i) \,g_{i+1}&=\tau_{i+1} h_{i+1}.\notag
	\end{align}
	Then by pulling the $h_i$'s to the right
	one has that $p=\tau_1 e_1 \tau_2\cdots \tau_n e_n a$, where
	$\tau_i\in\caT_{e_i}$ and $a\in\caG_{o(e_n)}$.
	If there exists $i$ such that $e_{i+1}=\bar{e}_i$, 
	then one has that $\phi_{\bar{e}_i}(h_i)$ and $h_{i+1}$ are both in $\caH_{\bar{e}_i}$
	and hence it must be $\tau_{i+1}\ne x$, with $x=r(h_{i+1})$.
	Thus, $\tau_1 e_1 \tau_2\cdots \tau_n e_n a$ is not an unit.
\end{proof}

\subsection{The universal cover of a graph of groupoids}

Let $\caG(\Gamma)$ be a graph of groupoids and let $T\subseteq\Gamma$ be a
maximal subtree of $\Gamma$ which induces an equivalence relation as in \eqref{eq:relpi}.
For $v\in\Gamma^0$ and $e\in\Gamma^1$, the groupoids $\caG_v$ and $\caG_e$
induce subgroupoids of $\pi_1(\caG(\Gamma))$.
This allows us to define a {\it universal cover} for $\caG(\Gamma)$.

\begin{defi}
	Let $\caG(\Gamma)$ be a graph of groupoids and 
	let $\caH_e=\alpha_e(\caG_e)\subseteq\caG_{t(e)}$, $e\in\Gamma^1$.
	We fix an orientation $\Gamma_+^1$ of $\Gamma$ and put
	\begin{equation}
	\label{eq:eps}
	\eps(e)=\begin{cases}
	0&\mbox{ if } e\in\Gamma_+^1\\
	1&\mbox{ if } e\in\Gamma^1\setminus\Gamma_+^1.
	\end{cases}
	\end{equation}
	In particular, one has that
	%$\eps(\bar{e})=1-\eps(e)$
	\begin{equation*}
	\eps(\bar{e})=1-\eps(e)
	\end{equation*}
	for all $e\in\Gamma^1$.
	We denote by $|e|$ the edge satisfying
	$\{|e|\}=\{e,\bar{e}\}\cap \Gamma_+^1$.
\end{defi}

Note that by definition, one has that $\caH_{|e|}=\caH_{|\bar{e}|}$ for all $e\in\Gamma^1$.

\begin{notation}
	For $e\in\Gamma^1$, we put 
	\begin{itemize}
		\item[(i)] $e\,\caG_{o(e)}=\{\, \phi_e(r(g)) e g \mid g\in\caG_{o(e)}\,\}$,
		\item[(ii)] $e\,\caG_{o(e)}\,e^{-1}=\{\, \phi_e(r(g)) e g e^{-1}\phi_e(s(g)) \mid g\in\caG_{o(e)}\,\}$.
	\end{itemize}
\end{notation}

\begin{rem}
	\label{rem:welldef}
	Let $e\in\Gamma^1$.
	With the notations above, one has the following
	\begin{align}
	\caH_e &=\caG_{t(e)}\cap\, e\,\caG_{o(e)}\,e^{-1};\tag{a}\\
	\caH_{|e|}&=e^{1-\eps(e)} \,\caH_{\bar{e}} \,e^{\eps(e)-1};\tag{b}\\
	\caH_{|e|}
	&=e^{1-\eps(e)} \,\caH_{\bar{e}} \,e^{\eps(e)-1}\tag{c}\\
	&=e^{1-\eps(e)} \,(\caG_{o(e)}\cap\,e^{-1}\,\caG_{t(e)}\,e)\,e^{\eps(e)-1}\notag\\
	&= e^{1-\eps(e)}  \,\caG_{o(e)} \,e^{\eps(e)-1}\,\cap\, e^{-\eps(e)}\,\caG_{t(e)}\,e^{\eps(e)}.\notag
	\end{align}
\end{rem}

We remind that for $v\in\Gamma^0$ and $p\in\pi_1(\caG(\Gamma))$, the set $p\caG_v$ 
is the set $p\caG_v=\{\,pg\mid g\in\caG_v, s(p)=r(g)\,\}$.
%the groupoid $\caG_v$ naturally injects into the fundamental groupoid $\pi_1(\caG(\Gamma))$ and one has $p\caG_v=\{\,pg\mid s(p)=r(g)\,\}$.
In what follows we simplify the groupoid notation and write 
$\pi_1(\caG(\Gamma))/\caG_v$ to indicate
the set $\big\{\,p\caG_v\mid  p\in\pi_1(\caG(\Gamma)),\, s(p)\in\tau(\caG_v^{(0)})\,\big\}$, 
for $v\in\Gamma^0$.

\begin{defi}
	\label{def:BSforest}
	The {\it Bass-Serre forest $X_{\caG(\Gamma)}$} of the graph of groupoids $\caG(\Gamma)$, also known as its 
	{\it universal cover}, is the graph defined by 
	\begin{align}
	X_{\caG(\Gamma)}^{0}&=\bigsqcup_{v\in\Gamma^0} \pi_1(\caG(\Gamma))/\caG_v [v],\\
	X_{\caG(\Gamma)}^{1}&=\bigsqcup_{e\in\Gamma^1} \pi_1(\caG(\Gamma))/\caH_{|e|} [e].
	\end{align}
	The maps $o,t\colon X_{\caG(\Gamma)}^1\to\Lambda^0$ and 
	$\,\bar{\,}\,\colon X_{\caG(\Gamma)}^1\to X_{\caG(\Gamma)}^1$ 
	are given by
	\begin{align}
	o(p\caH_{|e|})&=pe^{1-\eps(e)}\caG_{o(e)} [o(e)]\,\in\pi_1(\caG(\Gamma))/\caG_{o(e)},\\
	t(p\caH_{|e|})&=pe^{-\eps(e)}\caG_{t(e)}[t(e)]\,\in\pi_1(\caG(\Gamma))/\caG_{t(e)},
	\intertext{and}
	\overline{p\caH_{|e|}[e]}&=p\caH_{|\bar{e}|}[\bar{e}].
	\end{align}
\end{defi}

	Note that  $t$ and $o$ are well defined by Remark \ref{rem:welldef}.
	In fact, suppose that for $p,q\in\pi_1(\caG(\Gamma))$ and $e\in\Gamma^1$ one has 
	that $p\in q\caH_{|e|}$. 
	Then there exists $h\in\caH_{|e|}$ such that $p=qh$.
	By Remark \ref{rem:welldef} (c), there exists $g\in\caG_{o(e)}$
	such that $h=e^{1-\eps(e)} \,g \,e^{\eps(e)-1}$.
	Then one has
	\begin{equation}
	\begin{aligned}
	o(p\caH_{|e|})&= p\,e^{1-\eps(e)}\,\caG_{o(e)}[o(e)]\\
	&=q\,h\,e^{1-\eps(e)}\,\caG_{o(e)}[o(e)]\\
	&=q \,e^{1-\eps(e)} \,g \,e^{\eps(e)-1}\,e^{1-\eps(e)}\,\caG_{o(e)}[o(e)]\\
	&=q \,e^{1-\eps(e)} \,g \,\caG_{o(e)}[o(e)]\\
	&=q \,e^{1-\eps(e)} \,r(g)\,\caG_{o(e)}[o(e)]\\
	&=o(q\caH_{|e|}).
	\end{aligned}	
	\end{equation}
	%	for $p\in\pi_1(\caG(\Gamma))$ and $e\in\Gamma^1$, one has that
	%	$s(p)\in\caH_e\subseteq\caG_{t(e)}$ if $|e|=e$, 
	%	$s(p)\in\caH_{\bar{e}}\subseteq\caG_{o(e)}$ if $|e|=\bar{e}$.
	%	Thus, 
	%	\begin{equation*}
	%	pe^{1-\eps(e)}\caG_{o(e)}=\begin{cases}
	%	pe\phi_{\bar{e}}(s(p))\caG_{o(e)} &\mbox{ if } |e|=e,\\
	%	p\caG_{o(e)} &\mbox{ if } |e|=\bar{e}
	%	\end{cases}
	%	\,\in\pi_1(\caG(\Gamma))/\caG_{o(e)}.
	%	\end{equation*}
	%	Thus, for $p\in\pi_1(\caG(\Gamma))$ and $e\in\Gamma^1$ one has that $o(p\caH_{|e|})\in\pi_1(\caG(\Gamma))/\caG_{o(e)}$.
	%	Similar calculations show that  $t(p\caH_{|e|})\in\pi_1(\caG(\Gamma))/\caG_{t(e)}$.
	%	One has that $t$ and $o$ are well defined by Remark \ref{rem:welldef}.
	Moreover, it is easy to see that 
	\begin{align}
	\overline{p\caH_{|e|}[e]}&\ne p\caH_{|e|}[e]\\
	\overline{\overline{p\caH_{|e|}[e]}}&=p\caH_{|e|}[e],
	\intertext{and, by \eqref{eq:eps}}
	o(\overline{p\caH_{|e|}[e]})&=o(p\caH_{|\bar{e}|}[\bar{e}])\notag\\
	&= p \bar{e}^{1-\eps(\bar{e})}\caG_{o(\bar{e})}[o(\bar{e})],\\
	&= p e^{-\eps(e)}\caG_{t(e)}[t(e)],\notag\\
	&= t(p\caH_{|e|}[e])\notag.
	\end{align}
Hence, $X_{\caG(\Gamma)}$ is a graph. 

\begin{rem}
	The fundamental groupoid $\pi_1(\caG(\Gamma))$ acts on $X_{\caG(\Gamma)}$ 
	by left multiplication with momentum map 
	\begin{equation}
	\label{eq:varphibs}
	\begin{aligned}
	\tivarphi\colon X_\caG^{0}&\to\pi_1(\caG(\Gamma))^{(0)}\\
	p\caG_v&\mapsto r(p)
	\end{aligned}
	\end{equation}
	(see Definition \ref{def:gpdaction}).
	That is, 
	for $p,p'\in\pi_1(\caG(\Gamma))$, $q\caG_v[v]\in X_{\caG(\Gamma)}^0$, 
	$q'\caH_e[e]\in X_{\caG(\Gamma)}^1$ 
	with $s(p)=\tivarphi(q\caG_v)=r(q)$ and $s(p')=\tivarphi\circ o (q'\caH_e)=r(q')$ 
	one has that 
	\begin{align}
	\mu^0(p,q\caG_v[v])&= pq\caG_v[v],\\
	\mu^1(p',q'\caH_{|e|}[e])&= p'q'\caH_{|e|}[e],
	\end{align}
	where $pq\in\pi_1(\caG(\Gamma))$ is the reduced graph of groupoids word obtained from 
	the concatenation of $p$ and $q$.
\end{rem}

Note further that for $x\in\pi_1(\caG(\Gamma))^{(0)}$ and for all $e\in T^1$ one has that
\begin{equation}
\label{eq:xedge1}
o(x\caH_{|e|}[e])=x\caG_{o(e)}[o(e)],\quad 
t(x\caH_{|e|}[e])=x\caG_{t(e)}[t(e)],
\end{equation}
and for all $e\in \Gamma_+^1\setminus T^1$ one has that
\begin{equation}
\label{eq:xedge2}
o(x\caH_{|e|}[e])= xe\caG_{o(e)}[o(e)] ,\quad 
t(x\caH_{|e|}[e])= x\caG_{t(e)}[t(e)].
\end{equation}

\begin{rem}
	\label{rem:bsforest}
	The Bass-Serre forest $X_{\caG(\Gamma)}$ is fibered on $\pi_1(\caG(\Gamma))^{(0)}$
	via the map $\tivarphi$ defined in \eqref{eq:varphibs}.
	For each unit $x\in\pi_1(\caG(\Gamma))^{(0)}$, 
	we denote by $xX_{\caG(\Gamma)}$ the subgraph of $X_{\caG(\Gamma)}$
	which is fibered on $\{x\}$, i.e., 
	\begin{equation}
	\begin{aligned}
	xX_{\caG(\Gamma)}^0&=\tivarphi^{-1}(x),\\
	xX_{\caG(\Gamma)}^{1}&=o^{-1}(\tivarphi^{-1}(x)).
	\end{aligned} 
	\end{equation}
	
	Clearly $xX_{\caG(\Gamma)}$ and $yX_{\caG(\Gamma)}$ are disconnected for all $x\ne y$, $x,y\in\pi_1(\caG(\Gamma))^{(0)}$.	
	Since each vertex in $xX_{\caG(\Gamma)}$ starts with $x$ and
	each vertex in $yX_{\caG(\Gamma)}$ starts with $y$, 
	i.e., for $p\caG_v\in X_x$ and $q\caG_w\in X_y$ one has $r(p)=x$ and $r(q)=y$,
	there is no edge connecting $xX_{\caG(\Gamma)}$ and $yX_{\caG(\Gamma)}$.
	In fact, the map $\tivarphi$ is such that $\tivarphi\circ o =\tivarphi\circ t$ by definition.
	Moreover, $xX_{\caG(\Gamma)}$ is a tree (cf. Proposition \ref{prop:tree}).
	It follows that $X_{\caG(\Gamma)}$ is a forest, so that the terminology used in Definition \ref{def:BSforest} is justified.
\end{rem}

\begin{prop}
	\label{prop:tree}
	$xX_{\caG(\Gamma)}$ is a tree for all $x\in\pi_1(\caG(\Gamma))^{(0)}$.
\end{prop}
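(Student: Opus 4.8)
The plan is to establish the two defining properties of a tree for the fiber $xX_{\caG(\Gamma)}$, namely that it is connected and that it contains no nontrivial reduced cycle (so that $|\caP_{a,b}|=1$ for all vertices $a,b$). Both parts rest on the normal form theory already in place: the uniqueness of a reduced graph of groupoids word (Theorem~\ref{thm:uniqreducedword}), the analogue of Britton's Lemma (Lemma~\ref{lem:brit}), the well-definedness computations of Remark~\ref{rem:welldef}, and the origin/terminus formulas \eqref{eq:xedge1}--\eqref{eq:xedge2}.

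For connectedness, I would first note that the base vertices $x\caG_v[v]$, $v\in\Gamma^0$, together with the edges $x\caH_{|e|}[e]$, $e\in T^1$, span a subgraph of $xX_{\caG(\Gamma)}$ isomorphic to the maximal subtree $T$: formula \eqref{eq:xedge1} gives $o(x\caH_{|e|}[e])=x\caG_{o(e)}[o(e)]$ and $t(x\caH_{|e|}[e])=x\caG_{t(e)}[t(e)]$ for $e\in T^1$, so these edges realise $T$ on the $x$-translates of the vertex groupoids. Since $T$ is connected with $T^0=\Gamma^0$, all base vertices lie in one component. It then remains to connect an arbitrary vertex $p\caG_v[v]$ to a base vertex, which I would do by induction on the length $n$ of the reduced word $p=g_1e_1\cdots g_ne_ng_{n+1}$ (Theorem~\ref{thm:uniqreducedword}). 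When $n=0$ the coset $p\caG_v$ absorbs the single vertex element into $\caG_v$ and equals a base vertex $x\caG_v[v]$. For $n\ge 1$, peeling off the terminal edge letter $e_n$ and using \eqref{eq:xedge2} (resp.\ \eqref{eq:xedge1}) shows that the edge determined by $g_1e_1\cdots g_n$ and $\caH_{|e_n|}$ has $p\caG_v[v]$ as one endpoint and a vertex represented by a word of length $n-1$ as the other; the inductive hypothesis then connects $p\caG_v[v]$ to a base vertex, and hence to the root vertex $x\caG_r[r]$.

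For acyclicity I would argue by contradiction. Suppose $xX_{\caG(\Gamma)}$ contains a reduced closed path; after translating by a suitable element we may assume it is based at a base vertex $x\caG_{v_0}[v_0]$. Reading off the edge cosets along the path and choosing left transversal representatives (Remark~\ref{rem:leftransv}) produces a graph of groupoids word $w=g_1e_1g_2\cdots g_me_mg_{m+1}$ with underlying closed edge-path $e_1\cdots e_m$ in $\Gamma$. The hypothesis that the path has no backtracking translates precisely into the condition that $w$ is reduced in the sense of Serre, while the fact that the path returns to $x\caG_{v_0}[v_0]$ forces $w\in\Stab(x\caG_{v_0})=\caG_{v_0}$; multiplying on the left by the inverse of the corresponding vertex element (which leaves the Serre-reduced condition intact) yields a Serre-reduced word of positive length representing a unit, contradicting Lemma~\ref{lem:brit}. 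Hence there is no reduced cycle, so between any two vertices there is at most one reduced path; combined with connectedness this shows $xX_{\caG(\Gamma)}$ is a tree.

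The step I expect to be the main obstacle is the translation in the acyclicity argument: one must match the purely combinatorial no-backtracking condition in $xX_{\caG(\Gamma)}$ with the Serre-reduced condition on the associated word, keeping careful track of the orientation function $\eps$, of the choice $\caH_{|e|}$, and of the coset bookkeeping hidden in the terminus and origin maps. The same care is required in the inductive peeling step of the connectedness proof, where one must verify, case by case according to whether $e_n\in\Gamma_+^1$, that removing the last edge letter genuinely lowers the word length and lands in a legitimate vertex coset via \eqref{eq:xedge1}--\eqref{eq:xedge2} and Remark~\ref{rem:welldef}.
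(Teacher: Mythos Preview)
Your proposal is essentially correct, and for the acyclicity half it follows the same line as the paper: both reduce to Britton's Lemma~\ref{lem:brit} by extracting from a hypothetical reduced cycle a Serre-reduced word of positive length representing a unit. The paper carries out the bookkeeping explicitly, setting $q_i=p_ie_i^{-\eps(e_i)}$ and deriving elements $a_i\in\caG_{o(e_i)}$ with $s(q_1)e_1a_1\cdots e_na_n=s(q_1)$, and then shows that the failure of Serre-reducedness forces $p_{i+1}\caH_{|e_{i+1}|}[e_{i+1}]=\overline{p_i\caH_{|e_i|}[e_i]}$. Your sketch encodes exactly this computation; the one step to drop is the ``translation to a base vertex'': acting by $p^{-1}$ moves you to the fiber $s(p)X_{\caG(\Gamma)}$, not $xX_{\caG(\Gamma)}$, and in any case the paper's argument works directly at an arbitrary basepoint, so no translation is needed.

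For connectedness you take a genuinely different route. The paper argues via a Cayley-graph style criterion: it defines the finite subgraph $\Xi$ spanned by the edges $x\caH_{|e|}[e]$, $e\in\Gamma^1$, observes that $xX_{\caG(\Gamma)}$ is the orbit of $\Xi$ under the isotropy group $x\pi_1(\caG(\Gamma))x$, and then invokes Proposition~\ref{prop:cayley} together with the generating system $\caS$ to show that each $\Xi\cup a\ast\Xi$ is connected. Your induction on the length of the reduced word representing $p$ is more elementary and self-contained: it avoids the Cayley-graph machinery entirely and uses only the origin/terminus formulas and the normal form of Theorem~\ref{thm:uniqreducedword}. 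The paper's approach has the advantage of making the homogeneous structure of the fiber under the isotropy group visible, which is conceptually pleasant; your approach has the advantage of being a straightforward peeling argument that requires no auxiliary results beyond the normal form. Both are valid.
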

\begin{proof}
	Fix $x\in\pi_1(\caG(\Gamma))^{(0)}$.
	First we need to prove that $xX_{\caG(\Gamma)}$ is a connected graph.
	Let $\Xi$ be the smallest subgraph of $xX_{\caG(\Gamma)}$ containing 
	$\{\,x\caH_{|e|}[e]\mid e \in\Gamma^1\,\}$.
	Then $\Xi$ is connected by \eqref{eq:xedge1}.
	and \eqref{eq:xedge2}.
	Moreover, $xX_{\caG(\Gamma)}= x\pi_1(\caG(\Gamma))x\ast\Xi$, where
	$x\pi_1(\caG(\Gamma))x$ is the isotropy group of $\pi_1(\caG(\Gamma))$ at $x$,
	i.e., 
	\begin{equation*}
	x\pi_1(\caG(\Gamma))x=\{p\in\pi_1(\caG(\Gamma))\mid s(p)=x=r(p)\}.
	\end{equation*}
	By Proposition \ref{prop:cayley},
	it suffices to show that there exists a generating system $\caS\subseteq \pi_1(\caG(\Gamma))$
	such that $\Xi\cup a\ast\Xi$ is connected for all $a\in x\caS x$.
	Then one has that the graph
	\begin{equation}
	\Xi\,\cup\, a_1\ast\Xi\,\cup\, a_1 a_2\Xi\,\cup\,\cdots\,\cup a_1 a_2\cdots a_n\ast\Xi
	\end{equation}
	is connected for all $a_1,\dots,a_n\in x\caS x$ by induction on $n$.
	It suffices to prove the claim for $a\in x\caS x$. Choosing 
	\begin{equation}
	\caS= \Bigg(\bigsqcup_{v\in\Gamma^0} \caG_v \,\Bigg)\,\sqcup \,
	\Bigg(\bigsqcup_{
		\substack{e\in\Gamma_+^1,\, 
			y\in\caG_{o(e)}^{(0)} }}
	\phi_e(y) e y\,\Bigg).
	\end{equation}
	one verifies the claim for $a\in x\caS x$.
	Since $\Xi$ is connected by construction and $a\ast \Xi$ is connected
	because the action of $\pi_1(\caG(\Gamma))$ preserves the distances, it is sufficient to prove that 
	there exists a vertex lying in both $\Xi$ and $a\ast\Xi$, i.e., 
	$\Xi^0\cap a\ast\Xi^0\ne \emptyset$.
	By definition, for any $a\in \caS $ there exists a vertex 
	$p\caG_v$, $v\in\Gamma^0$, such that
	$a\ast p\caG_v$ is defined and $a\ast p\caG_v=p\caG_v$.
	Fix $a\in x\caS x$, $v\in\Gamma^0$ such that $a\ast x\caG_v=x\caG_v$.
	Then one has that  $x\caG_v\in \Xi^0\cap a\ast\Xi^0\ne \emptyset$, which proves that 
	$\Xi\cup a\ast\Xi$ is connected.
	It remains to prove that $xX_{\caG(\Gamma)}$ does not contain non-trivial reduced paths from $v$ to $v$, 
	$v\in xX_{\caG(\Gamma)}^{0}$. 
	Let 
	\begin{equation}
	\mathfrak{p}=(p_1\caH_{|e_1|}[e_1])\,(p_2\caH_{|e_2|}[e_2])\,\cdots\, (p_n\caH_{|e_n|}[e_n]),
	\quad n\ge1
	\end{equation}
	be such a path. Put $v_i=o(e_i)$.
	In particular, one has  $v_n=o(e_n)=t(e_1)$.
	Then one has
	\begin{align}
	o( p_n\caH_{|e_n|}[e_n])&= p_n e_n^{1-\eps(e_n)}\caG_{v_n}[v_n]
	= p_1 e_1^{-\eps(e_1)}\caG_{v_n}[v_1]=t( p_1\caH_{|e_1|}[e_1]) ,\notag\\
	o(p_1\caH_{|e_1|}[e_1])&= p_1 e_1^{1-\eps(e_1)}\caG_{v_1}[v_1]
	= p_2 e_2^{-\eps(e_2)}\caG_{v_1}[v_2]=t( p_2\caH_{|e_2|}[e_2] ) ,\notag\\
	\vdots&\notag\\
	o( p_{n-1}\caH_{|e_{n-1}|}[e_{n-1}])&= p_{n-1} e_{n-1}^{1-\eps(e_{n-1})}\caG_{v_{n-1}}[v_{n-1}]
	= p_n e_n^{-\eps(e_n)}\caG_{v_{n-1}}[v_n]=t(p_n\caH_{|e_n|}[e_n]).\notag
	\end{align}
	In particular, putting $q_i=p_i e_i^{-\eps(e_i)}$, there exist elements $a_i\in\caG_{v_i}$ with 
	$r(a_i)=\phi_{\bar{e}_1}(s(q_i))$ such that
	\begin{align}
	q_n e_n a_n&=q_1\notag\\
	q_1 e_1 a_1&=q_2\notag\\
	\vdots&\notag\\
	q_{n-1} e_{n-1} a_{n-1}&=q_n.\notag
	\end{align}
	In particular, one has 
	\begin{align}
	s(q_n) e_n a_n&=q_n^{-1} q_1\notag\\
	s(q_1) e_1 a_1&=q_1^{-1} q_2\notag\\
	\vdots&\notag\\
	s(q_{n-1}) e_{n-1} a_{n-1}&=q_{n-1}^{-1} q_n.\notag
	\end{align}
	Then one has
	\begin{equation}
	s(q_1) e_1 a_1 e_2 a_2 \cdots e_n a_n= s(q_1).
	\end{equation}
	Thus, by Lemma \ref{lem:brit}, $\mathfrak{q}:=s(q_1) e_1 a_1 e_2 a_2 \cdots e_n a_n$
	is not reduced.
	Hence there exists $i\in\{1,\dots,n-1\}$ such that $e_{i+1}=\bar{e}_i$
	and $a_i\in\image(\alpha_{\bar{e}_i})=\caH_{\bar{e}_i}$, i.e., there exists $b\in\caG_{e_i}$
	such that $a_i=\alpha_{\bar{e}_i}(b)$ and
	\begin{equation}
	e_i a_i e_{i+1}= e_i \alpha_{\bar{e}_i}(b) \bar{e}_i = \alpha_{e}(b)\in\caG_{t(e_i)}=\caG_{v_{i-1}}
	\end{equation}
	Here we put $v_0=v_n$. Then one has
	\begin{equation}
	\begin{aligned}
	p_{i+1}\caH_{|e_{i+1}|}[e_{i+1}]
	&= q_{i+1} e_{i+1}^{\eps(e_{i+1})}\caH_{|e_i|}[\bar{e}_i]\\
	&= q_i e_i a_i \bar{e}_i^{\eps(\bar{e}_i)}\caH_{|e_i|}[\bar{e}_i]\\
	&= p_i e_i^{1-\eps(e_i)} a_i \bar{e}_i^{1-\eps(e_i)}\caH_{|e_i|}[\bar{e}_i]\\
	&= p_i e_i^{1-\eps(e_i)} a_i e_i^{\eps(e_i)-1}\caH_{|e_i|}[\bar{e}_i].
	\end{aligned}
	\end{equation}
	Note that
	\begin{equation}
	\begin{aligned}
	e_i^{1-\eps(e_i)} a_i e_i^{\eps(e_i)-1}&=
	\begin{cases}
	e_i a_i e_i^{-1}&\mbox{ if } |e_i|=e_i,\\
	a_i &\mbox{ if } |e_i|=\bar{e}_i
	\end{cases}
	=\begin{cases}
	\alpha_{e_i}(b) &\mbox{ if } |e_i|=e_i,\\
	a_i &\mbox{ if } |e_i|=\bar{e}_i.
	\end{cases}
	\end{aligned}
	\end{equation}
	Since $\alpha_{e_i}(b)\in\caH_{e_i}$ and $a_i=\alpha_{\bar{e}_i}(b)\in\caH_{\bar{e}_i}$, 
	one has that $e_i^{1-\eps(e_i)} a_i e_i^{\eps(e_i)-1}\in\caH_{|e_{i}|}$
	by Remark \ref{rem:welldef} (b).
	Hence one has 
	\begin{equation}
	p_{i+1}\caH_{|e_{i+1}|}[e_{i+1}]
	=p_{i}\caH_{|e_{i}|}[\bar{e}_{i}]
	=\overline{p_{i}\caH_{|e_{i}|}[e_{i}]}.
	\end{equation}
	Thus, $\mathfrak{p}$ is not reduced, a contradiction, and this yields the claim.
\end{proof}

%%%%%%%%%%%%%
%%%%%%%%%%%%%
%%%%%%%%%%%%%
\section{The structure theorem}

\label{s:structhm}

In this section we collect all the results of the previous sections. 
Let $\caG$ be a groupoid acting on a forest $F$  without inversion of edges. 
Then we have seen in Section \ref{s:gractforest}
that one has a desingularization $\mathfrak{D}(\caG,F)$,
i.e., a graph of groupoids $\caG(\Gamma)$ based on a graph $\Gamma=T\sqcup\Upsilon_+\sqcup \Upsilon_-$,
where $T\subseteq \Gamma$ is a maximal subtree 
such that $\caG(T)$ is a tree of representatives for the action of $\caG$ on $F$
and $\Upsilon_+$ and $\Upsilon_-$ are as in \eqref{eq:ups+-},
and a family of groupoid elements 
$g_e=\{g_{e,x}\}_{x\in\varphi(t(\image(\sigma_e)))}$ for each
$e\in\Gamma^1$,
such that 
\begin{itemize}
	\itemsep0em
	\item[(i)] $\caG_v=\Stab_{\caG}(\sigma_v)$ for $v\in\Gamma^0$;
	\item[(ii)] $\caG_e=\caG_{\bar{e}}=\Stab_{\caG}(\sigma_e)$ for $e\in\Gamma^1$;
	\item[(iii)] $\alpha_e\colon\caG_e\to\caG_{t(e)}$ is given by inclusion 
	for $e\in T^1\sqcup\Upsilon_+$;
	\item[(iv)] $\alpha_{e}\colon\caG_e\to\caG_{t(e)}$
	%	$\alpha_{e}\colon\caG_e\longrightarrow\Stab_\caG(\sigma_{t(e)})
	%	\xrightarrow[]{i_{g_e}} \caG_{t(e)}$
	is given by conjugation by $g_e$ for $e\in \Upsilon_-$ (see \eqref{eq:alphaconj}). 
	%	where the first map is given by inclusion 
	%	and the second map is given by conjugation by $g_e$, 
	%	i.e., for $\gamma\in\caG_e$ one has
	%	\begin{equation*}
	%		i_{g_e}(\gamma)=g_{e,\alpha_{e}(r(\gamma))}\,\gamma \,g_{e,\alpha_{e}(s(\gamma))}^{-1}.
	%	\end{equation*}
\end{itemize}

We put $g_e=\{\iid_x\}_{x\in\varphi(\image(\sigma_e))}$ for $e\in T^1$ and 
$g_e=g_{\bar{e}}^{-1}$ for $e\in\Upsilon_+$.

So by definition for all $e\in\Gamma_+^1$ one has a commutative diagram 

\begin{center}
	\begin{tikzcd}[column sep=normal, row sep=large]
		&
		&\caG_e \ar[dl, "\alpha_{\bar{e}}",labels=left] \ar[dr, "\alpha_{e}",labels=right]
		&
		&[1.0em]\\
		&\caG_{o(e)} \ar[d, hook]
		& 
		& \caG_{t(e)}  \ar[d, hook]
		&[1.0em]\\
		& \caG\ar[rr, "i_{g_e}"]
		&
		& \caG
	\end{tikzcd}
\end{center}

\begin{prop}
	For $\caG$, $F$, $\mathfrak{D}(\caG,F)$ and $\{g_e\}_{e\in\Gamma^1}$ as above,
	the assignment 
	\begin{equation}
	\psi_\circ\colon \bigsqcup_{\substack{e\in\Gamma_+^1\\
			x\in\caG_{o(e)}^{(0)}}} (\phi_e(x), x) 
	\,\sqcup\,
	\bigsqcup_{v\in\Gamma^0} \caG_v\,
	\longrightarrow\,\caG
	\end{equation}
	given by 
	\begin{equation}
	\begin{aligned}
	\label{eq:psig}
	\psi_\circ(g)&=g, \quad &&g\in\caG_v,\\
	\psi_\circ((\phi_e(x),x))&= g_{e,x}, \quad &&e\in\Gamma^1, \,x\in\caG_{o(e)}^{(0)},
	\end{aligned}
	\end{equation}
	defines a groupoid homomorphism
	\begin{equation}
	\psi\colon\pi_1(\caG(\Gamma))\longrightarrow \caG.
	\end{equation}
\end{prop}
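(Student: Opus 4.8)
The plan is to invoke the universal property of a groupoid presented by generators and relations (cf. \cite{hig64}): since $\pi_1(\caG(\Gamma))$ is defined by the generators in \eqref{eq:genpi} subject to the vertex-groupoid relations together with (R1) and (R2), a groupoid homomorphism out of $\pi_1(\caG(\Gamma))$ exists and is uniquely determined by any assignment of the generators to elements of $\caG$ that (a) is compatible with the source and range maps and (b) respects every defining relation. Thus the statement reduces to these two verifications for $\psi_\circ$.

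First I would settle the unit and source/range compatibility. The unit space of $\pi_1(\caG(\Gamma))$ is $X=\caG_r^{(0)}$, which I send to $\caG^{(0)}$ by the inclusion $\caG_r^{(0)}\hookrightarrow\caG^{(0)}$. The key observation is that for every tree edge $e\in T^1$ both $\alpha_e$ and $\alpha_{\bar e}$ are inclusions of $\caG_e=\Stab_\caG(\sigma_e)$ into $\caG$, so that $\phi_e^0=\alpha_e\circ\alpha_{\bar e}^{-1}$ is the identity on units; consequently the equivalence relation $\sim$ identifies a unit of $\caG_v^{(0)}$ only with the same element of $\caG^{(0)}$, and $\tau(x)$ coincides with $x$ as an element of $\caG^{(0)}$. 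Hence $\psi_\circ$ is constant on $\sim$-classes, and since each $\caG_v\hookrightarrow\caG$ is a homomorphism, $\psi_\circ(g)=g$ for $g\in\caG_v$ carries the correct source and range. For the edge generators I would check, using the choice of the family $g_{e,x}$ in (D5), that $s(g_{e,x})=x=\tau(x)$ and $r(g_{e,x})=\phi_e(x)=\tau(\phi_e(x))$, matching the prescribed source and range of $(\phi_e(x),x)$.

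Next I would verify the relations. The vertex-groupoid relations hold automatically because $\psi_\circ$ restricts to the inclusion $\caG_v\hookrightarrow\caG$. For (R1) one has $g_e=\{\iid_x\}$ when $e\in T^1$, so $\psi_\circ((\phi_e(x),x))=\iid_x$ is a unit, consistent with $(\phi_e(x),x)=\iid_{\tau(x)}$. The substantive check is (R2), $e\,\alpha_{\bar e}(g)\,\bar e=\alpha_e(g)$ for $g\in\caG_e$, which under $\psi_\circ$ becomes the identity $g_{e,r(g)}\cdot\alpha_{\bar e}(g)\cdot g_{\bar e,s(g)}=\alpha_e(g)$ in $\caG$. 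I would treat $T^1$, $\Upsilon_+$ and $\Upsilon_-$ separately. On $T^1$ both $\alpha$'s are inclusions and the $g$'s are units, so it is trivial. On $\Upsilon_+$, $\alpha_e$ is inclusion while $\alpha_{\bar e}$ is conjugation by $g_{\bar e}$; substituting $\alpha_{\bar e}(g)=g_{\bar e,r(g)}\,g\,g_{\bar e,s(g)}^{-1}$ together with the convention $g_e=g_{\bar e}^{-1}$ collapses the left-hand side to $g=\alpha_e(g)$ by telescoping. The case $\Upsilon_-$ follows because (R2) for $\bar e$ is the inverse relation to (R2) for $e$, using $\phi_{\bar e}=\phi_e^{-1}$.

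The main obstacle is the bookkeeping in the source/range and (R2) verifications for the non-tree edges: one must track exactly which unit indexes each $g_{e,x}$, confirm that the conjugation maps $\alpha_{\bar e}=i_{g_{\bar e}}$ of Definition \ref{def:conj} land in the correct vertex groupoids with the correct units (here Lemma \ref{lem:injconj} guarantees that each $i_{g_e}$ is a well-defined injective homomorphism), and check that the representatives $g_{e,x}$ supplied by (D5) indeed realize $\phi_e$ on units. Once these indices are aligned, every relation telescopes and the universal property yields the homomorphism $\psi$.
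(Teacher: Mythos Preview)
Your proposal is correct and follows essentially the same approach as the paper: invoke the universal property of the presentation and verify that $\psi_\circ$ respects the defining relations. The paper's proof is more compressed---it checks the inverse relation and (R2) in one line each by appealing directly to the commutative square displayed just before the proposition (which encodes $i_{g_e}\circ\alpha_{\bar e}=\alpha_e$ for $e\in\Gamma_+^1$), rather than your case split over $T^1$, $\Upsilon_+$, $\Upsilon_-$; and it does not spell out the source/range compatibility or (R1) separately.
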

\begin{proof}
	By definition, $\psi_\circ$ satisfies the relation
	\begin{equation}
	\psi_\circ\big((\phi_e(x),x)\big)\,\psi_\circ\big((x,\phi_{e}(x))\big)=\iid_{\phi_e(x)}
	\end{equation}
	for all $e\in\Gamma^1$, $x\in\caG_{o(e)}^{(0)}$.
	Moreover, by definition one has
	\begin{equation}
	\begin{aligned}
	\psi_\circ\big((\alpha_e(x),\alpha_{\bar{e}}(x)\big)\, \psi_\circ(\alpha_{\bar{e}}(g))\, \psi_\circ\big((\alpha_{\bar{e}}(y),\alpha_e(y))\big)
	&=g_{e,\alpha_{\bar{e}}(x)}\,\alpha_{\bar{e}}(g)\, g_{e,\alpha_{\bar{e}}(y)}^{-1}\\
	&=\alpha_e(g)\\
	&=\psi_\circ(\alpha_{e}(g))
	\end{aligned}
	\end{equation}
	for all $e\in \Gamma^1$ and $g\in\caG_e$ with $r(g)=x$ and $s(g)=y$. 
\end{proof}

%\begin{cor}
%\label{cor:equivalencehm}	
 %One has that $\psi$ is a equivalence groupoid homomorphism.
%\end{cor}
%\begin{proof}
%	Suppose that for $p$, $q \in\pi_1(\caG(\Gamma))$ one has that 
%	$(\psi(p),\psi(q))\in\caG^{(2)}$, i.e., $s(\psi(p))=r(\psi(q))$.
%	Since $\psi$ is a homomorphism, one has that $\psi(s(p))=\psi(r(q))$.
%	By \eqref{eq:psig}, one has that $s(p)=r(q)$, i.e., $(p,q)\in\pi_1(\caG(\Gamma))^{(2)}$.
%\end{proof}

Since $\caG(\Gamma)$ in the desingularization of the action of $\caG$ on $F$ is a graph of groupoids,
one has that the fundamental groupoid 
$\pi_1(\caG(\Gamma))$ of $\caG(\Gamma)$ acts on 
the Bass-Serre forest $X_{\caG(\Gamma)}$ associated to 
$\caG(\Gamma)$ as we have seen is Subsection \ref{ss:fundgpd}.

\begin{prop}
	Let $\caG(\Gamma)$, $\pi_1(\caG(\Gamma))$ and $X_{\caG(\Gamma)}$ be as above.
	Let 
	\begin{equation}
	\Psi=(\Psi^0,\Psi^1)\colon X_{\caG(\Gamma)}\longrightarrow F
	\end{equation}
	be the mapping defined by
	\begin{equation}
	\begin{aligned}
	\Psi^0(p\caG_v[v])&= \mu^0\big(\,\psi (p),\, \sigma_v(\psi(s(p)))\,\big),\\
	\Psi^1(p\caH_{|e|}[e])&= \mu^1\big(\,\psi (p),\, \sigma_e(\psi(s(p)))\,\big),
	\end{aligned}
	\end{equation}
	where $\sigma_v$ and $\sigma_e$ are the partial section associated to $v\in\Gamma^0$
	and $e\in\Gamma^1$, respectively.
	Then $\Psi$ is a $\psi$-equivariant homomorphism of graphs, i.e., 
	\begin{equation}
	\begin{aligned}
	\Psi^0(\mu^0(p,w))&= \mu^0\big(\psi (p), \Psi^0(w)\big),\\
	\Psi^1(\mu^1(p,f))&= \mu^1\big(\psi (p), \Psi^1(f)\big)
	\end{aligned}
	\end{equation}
	for all $p\in\pi_1(\caG(\Gamma))$, $w\in X_{\caG(\Gamma)}^0$ and $f\in X_{\caG(\Gamma)}^1$.
\end{prop}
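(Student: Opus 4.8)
The plan is to verify three things in turn: that $\Psi^0$ and $\Psi^1$ are well defined on cosets, that $\Psi$ is $\psi$-equivariant, and that $\Psi$ intertwines the structure maps $o$, $t$ and $\,\bar{}\,$. I would prove equivariance before the homomorphism identities, since it lets me reduce the latter to representatives that are units.

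First, well-definedness. If $p'=pg$ with $g\in\caG_v$, then $p'\caG_v[v]=p\caG_v[v]$, and since $\psi$ restricts to the identity on each vertex groupoid (see \eqref{eq:psig}) one has $\psi(p')=\psi(p)\,g$. Because $\caG_v=\Stab_\caG(\sigma_v)$, Definition \ref{def:stab} gives $\mu^0(g,\sigma_v(s(g)))=\sigma_v(r(g))$, and axiom (A2) then absorbs the factor $g$, so $\Psi^0(p'\caG_v[v])=\Psi^0(p\caG_v[v])$. For $\Psi^1$ I would write a general element of $\caH_{|e|}$ in the form $e^{1-\eps(e)}g\,e^{\eps(e)-1}$ with $g\in\caG_{o(e)}$ using Remark \ref{rem:welldef}(c), and use $\caG_e=\Stab_\caG(\sigma_e)$ in the same way.

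For equivariance, recall the $\pi_1(\caG(\Gamma))$-action is $\mu^0(p,q\caG_v[v])=pq\caG_v[v]$ and likewise on edges. Since $\psi$ is a homomorphism and $s(pq)=s(q)$, one computes directly from (A2) that $\Psi^0(\mu^0(p,q\caG_v[v]))=\mu^0(\psi(p),\Psi^0(q\caG_v[v]))$, and the edge identity is identical. Both the structure maps of $X_{\caG(\Gamma)}$ (the action is by graph morphisms, as is immediate from the formulas for $o,t,\,\bar{}\,$ in Definition \ref{def:BSforest}) and those of $F$ (Definition \ref{def:gpdaction}) are equivariant; hence, writing every edge as $\mu^1(p,-)$ of a base edge and invoking equivariance twice, it suffices to check $o\circ\Psi^1=\Psi^0\circ o$, $t\circ\Psi^1=\Psi^0\circ t$ and $\overline{\Psi^1(\cdot)}=\Psi^1(\overline{\cdot})$ on the base edges $x\caH_{|e|}[e]$ with $x$ a unit.

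On such a base edge $\psi(x)=x$ and, by (A3), $\Psi^1(x\caH_{|e|}[e])=\mu^1(x,\sigma_e(x))=\sigma_e(x)$, so the identities to be checked become comparisons of $o(\sigma_e(x))$ and $t(\sigma_e(x))$ with $\Psi^0$ of the origin and terminus cosets read off from \eqref{eq:xedge1} and \eqref{eq:xedge2}. For $e\in T^1$ the edge generator is a unit by (R1) and $\psi$ sends it to an identity (here $g_e=\{\iid\}$), so the exponents $\eps(e)$ drop out and both identities reduce to the relations $o(\sigma_e(x))=\sigma_{o(e)}(x)$ and $t(\sigma_e(x))=\sigma_{t(e)}(x)$ built into the construction of $\sigma_e$ in Proposition \ref{prop:arboretumrep}. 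The delicate case, and the main obstacle, is $e\in\Gamma_+^1\setminus T^1$: here $\psi$ sends the edge generator to the conjugating element $g_{e,x}$, and by \eqref{eq:xedge2} this factor appears on exactly one of the two cosets attached to $x\caH_{|e|}[e]$. One must show that this single factor $g_{e,x}$ is precisely the element by which the corresponding endpoint of $\sigma_e(x)$ is displaced out of the fundamental domain, which is the content of property (D5)(ii) of Definition \ref{def:desing} together with $\psi((\phi_e(x),x))=g_{e,x}$. The care required here is purely bookkeeping: one must keep the orientation exponents of \eqref{eq:eps}, the identification of $o(e)$ and $t(e)$ with the fundamental-domain side and the $g_e$-displaced side, and the convention $g_{\bar e}=g_e^{-1}$ mutually consistent, so that the twist produced by $\psi$ matches the twist in (D5)(ii) on the correct endpoint. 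Finally, the identity for $\,\bar{}\,$ follows from $\overline{p\caH_{|e|}[e]}=p\caH_{|\bar e|}[\bar e]$, the convention $\sigma_{\bar e}(x)=\overline{\sigma_e(x)}$, and the $\caG$-equivariance of edge-reversal.
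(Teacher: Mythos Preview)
Your proposal is correct and follows essentially the same route as the paper: the heart of both arguments is the computation of $o(\sigma_e(x))$ and $t(\sigma_e(x))$ in terms of $\sigma_{o(e)}(x)$, $\sigma_{t(e)}(x)$ and the conjugating elements $g_{e,x}$, and the verification that the resulting $\eps(e)$-dependent twist matches the one appearing in the origin/terminus formulas of Definition~\ref{def:BSforest}. Your version is more carefully organized---you treat well-definedness on cosets and $\psi$-equivariance explicitly and then reduce the homomorphism identities to base edges, whereas the paper simply records the formulas $t(\sigma_e(x))=\mu^0(\psi(g_{e,x})^{1-\eps(e)},\sigma_{t(e)}(x))$ and $o(\sigma_e(x))=\mu^0(\psi(g_{e,x})^{-\eps(e)},\sigma_{o(e)}(x))$ and asserts the conclusion---but the substance is the same.
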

\begin{proof}
	We first prove that $\Psi$ is a homomorphism of graphs.
	Note that for $e\in\Upsilon_-$ and $p\caH_{|e|}[e]\in X_{\caG(\Gamma)}^1$, one has 
	\begin{align}
	t\big(\sigma_e(\psi(s(p))\big)&=\mu^0\big(\psi(g_{e,s(p)}), \sigma_{t(e)}(\psi(s(p))\big),\\
	o\big(\sigma_e(\psi(s(p))\big)&=\sigma_{o(e)}\big(\psi(s(p))\big).
	\end{align}
	For $e\in T^1\sqcup\Upsilon_+$ and $p\caH_{|e|}[e]\in X_{\caG(\Gamma)}^1$, one has 
	\begin{align}
	t\big(\sigma_e(\psi(s(p))\big)&= o\big(\sigma_{\bar{e}} (\psi(s(p)))\big)\notag\\
	&=\sigma_{o(\bar{e})}(\psi(s(p)))\notag\\
	&=\sigma_{t(e)}(\psi(s(p))),\notag\\
	o\big(\sigma_e(\psi(s(p))\big)&=t\big(\sigma_{\bar{e}}(\psi(s(p))\big)\notag\\
	&=\mu^0\big(\psi(g_{\bar{e}, s(p)}), \sigma_{t(\bar{e})} (\psi(s(p))\big)\notag\\
	&=\mu^0\big( \psi(g_{e, s(p)})^{-1} , \sigma_{o(e)}(\psi(s(p)))\big).\notag
	\end{align}
	Thus with the notation as in \eqref{eq:eps}, one obtains 
	\begin{align}
	\overline{\sigma_e(x)}&=\sigma_{\bar{e}}(x),\\
	t(\sigma_e(x))&=\mu^0\big( \psi(g_{e,x})^{1-\eps(e)}, \sigma_{t(e)}(x)\big), \\
	o(\sigma_e(x))&=\mu^0\big( \psi(g_{e},x)^{-\eps(e)}, \sigma_{o(e)}(x)\big),
	\end{align}
	for all $e\in\Gamma^1$, $x\in\dom\, \sigma_e$.
	This shows that $\Psi$ is a homomorphism of graphs which commutes with the 
	action of $\pi_1(\caG(\Gamma))$ on $X_{\caG(\Gamma)}$.
\end{proof}

One has the following structure theorem.

\begin{thm}
	\label{thm:structure}
	With the notation as above, $\psi$ is an isomorphism of groupoids 
	and $\Psi$ is an isomorphism of graphs.
\end{thm}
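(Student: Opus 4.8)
The plan is to show, in a non-circular order, that $\Psi$ is a graph isomorphism and that $\psi$ is a groupoid isomorphism, exploiting that $\Psi$ is $\psi$-equivariant and that both $X_{\caG(\Gamma)}$ and $F$ decompose into fibre trees. The four main inputs are: $\psi$ is strong (Corollary \ref{cor:stronghm}), so by Remark \ref{prop:homoker} injectivity of $\psi$ reduces to $\ker\psi=\pi_1(\caG(\Gamma))^{(0)}$; every morphism has a unique reduced representative (Theorem \ref{thm:uniqreducedword}) and reduced-in-Serre words are non-units (Lemma \ref{lem:brit}); each fibre $xF=\varphi^{-1}(x)$ and each $xX_{\caG(\Gamma)}$ is a tree (Proposition \ref{prop:tree}); and the stabiliser/partition data of the desingularization, namely $\caG_v=\Stab_\caG(\sigma_v)$, $\caG_e=\Stab_\caG(\sigma_e)$, and the fact that the $\caR_\caG$-saturations of the $\chi(\caG_e)$ partition $F^1$ (Definition \ref{def:desing}(iv)).

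First I would prove that $\Psi$ is a local isomorphism. Using $\psi$-equivariance one reduces to a base vertex $x\caG_v[v]$ with $x$ a unit, where $\Psi^0(x\caG_v[v])=\sigma_v(x)$. The edges of $X_{\caG(\Gamma)}$ at $x\caG_v[v]$ are indexed, over the edges $e$ with $t(e)=v$, by the cosets $\caG_v/\caH_{|e|}$, and an edge $g\caH_{|e|}[e]$ is sent to $\mu^1(g,\sigma_e(x))$. Since $\caG_e=\Stab_\caG(\sigma_e)$, two such edges have the same image precisely when they represent the same coset (cf. Remark \ref{rem:welldef}), while edges attached to different $e$ land in different parts of the partition of $F^1$ from Definition \ref{def:desing}(iv), so cannot coincide. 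Conversely, every edge of $F$ incident to $\sigma_v(x)$ lies in exactly one saturation $S(\chi(\caG_e))$, which pins down its type and coset, giving surjectivity onto the star. Hence $\Psi$ restricts to a bijection on stars.

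From local injectivity I would deduce that $\Psi^0$ and $\Psi^1$ are injective: $\Psi$ maps the tree $xX_{\caG(\Gamma)}$ into the tree $xF$, and an immersion of one tree into another sends reduced paths to reduced paths, so it cannot identify two distinct vertices, since their connecting reduced path would map to a reduced closed path of positive length. Distinct fibres have disjoint images because $\varphi\circ\Psi^0$ is computed by $\tivarphi$. Injectivity of $\psi$ then follows. If $p\in\ker\psi$ has reduced length $0$ it is a vertex-group element sent to itself, hence a unit. If its reduced length is $n>0$, then by Theorem \ref{thm:uniqreducedword} $p\notin\caG_r$, so with $x=s(p)=r(p)$ the element $p$ moves the root vertex $x\caG_r[r]$; but $\psi(p)=\iid_x$ fixes $\Psi^0(x\caG_r[r])=\sigma_r(x)$, so equivariance gives $\Psi^0(p\cdot x\caG_r[r])=\Psi^0(x\caG_r[r])$, contradicting injectivity of $\Psi^0$. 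Thus $\ker\psi=\pi_1(\caG(\Gamma))^{(0)}$ and $\psi$ is injective.

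Finally I would prove surjectivity of $\psi$ by reading off paths in the fibre trees: for $g\in\caG$ with $s(g)=y$ and $r(g)=x$, both $\sigma_r(x)$ and $g\cdot\sigma_r(y)$ lie in the tree $xF$, and by induction on the length of the reduced path between them, peeling off one edge at a time (each step contributing a vertex-group element and an edge element $g_{e,\cdot}$, with base case $g\in\caG_r$), one writes $g$ as a product of generators, i.e. $g\in\image\psi$. Surjectivity of $\Psi^0$ and $\Psi^1$ is then immediate, since the $\caG$-orbits of the representatives cover $F^0$ and $F^1$ and $\psi$ is onto. Combining these facts, $\psi$ is a groupoid isomorphism and $\Psi$ a graph isomorphism. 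I expect the star-matching (local isomorphism) step to be the main obstacle, since it is where the two stabiliser identifications and the edge partition of Definition \ref{def:desing}(iv) must be shown to fit together exactly; once $\Psi$ is known to be an immersion, the tree arguments and the transfer between $\psi$ and $\Psi$ are comparatively routine.
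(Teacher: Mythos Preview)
Your proposal is correct and follows essentially the same architecture as the paper's proof: both establish that $\Psi$ restricts to a bijection on stars, invoke the tree-immersion principle (which the paper isolates as Lemma~\ref{lem:injhomo}) to get injectivity of $\Psi$ on each fibre, and then transfer injectivity to $\psi$ via $\psi$-equivariance together with strongness and Remark~\ref{prop:homoker}. The one substantive difference is in the surjectivity of $\psi$: you argue directly by induction on the length of the reduced path in $xF$ joining $\sigma_r(x)$ to $g\cdot\sigma_r(y)$, peeling off edges and vertex-group elements, whereas the paper proceeds more structurally, constructing a second desingularization $\frD(\pi_1(\caG(\Gamma)),X_{\caG(\Gamma)})$ and comparing it to the original via a commutative square of representations to conclude that orbits match. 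Your route is more elementary and self-contained; the paper's route makes the ``mutually inverse'' slogan of the structure theorem more visibly symmetric but at the cost of invoking the desingularization machinery a second time.
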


We need the following lemma for the proof of Theorem \ref{thm:structure}.
\begin{lem}
	\label{lem:injhomo}
	Let $\Psi=(\Psi^0,\Psi^1)\colon \Lambda\to \Delta$ be a homomorphism of graphs such that 
	\begin{itemize}
		\itemsep0em
		\item[(i)] $ \Lambda$ is connected;
		\item[(ii)] $\Delta$ is a tree;
		\item[(iii)] for all $u\in\Lambda^0$ the map $\Psi^1|_{\mathrm{st}_\Lambda (u)}\colon \mathrm{st}_\Lambda (u)\to \mathrm{st}_\Delta (\Psi^0(u))$ is injective.
	\end{itemize}
	Then $\Psi$ is injective.
\end{lem}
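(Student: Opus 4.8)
The plan is to exploit the standard principle that a locally injective graph homomorphism into a tree sends reduced paths to reduced paths; from this, both the vertex- and edge-injectivity of $\Psi$ follow at once. So the first and only substantive step is to establish the following claim: if $p=e_1\cdots e_n\in\caP_{v,w}$ is a reduced path in $\Lambda$, then $\Psi^1(p):=\Psi^1(e_1)\cdots\Psi^1(e_n)$ is a reduced path of the same length $n$ in $\Delta$.

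That $\Psi^1(p)$ is a path at all is immediate, since $\Psi$ commutes with $o$ and $t$: one has $o(\Psi^1(e_i))=\Psi^0(o(e_i))=\Psi^0(t(e_{i+1}))=t(\Psi^1(e_{i+1}))$. For the absence of backtracking I would argue by contradiction: suppose $\Psi^1(e_{i+1})=\overline{\Psi^1(e_i)}$ for some $i$. Since $\Psi$ commutes with edge reversal, $\overline{\Psi^1(e_i)}=\Psi^1(\bar{e}_i)$, so $\Psi^1(e_{i+1})=\Psi^1(\bar{e}_i)$. Now both $e_{i+1}$ and $\bar{e}_i$ lie in $\mathrm{st}_\Lambda(o(e_i))$, because $t(e_{i+1})=o(e_i)=t(\bar{e}_i)$. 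Hence hypothesis (iii), applied at the vertex $o(e_i)$, forces $e_{i+1}=\bar{e}_i$, contradicting the fact that $p$ has no backtracking. This proves the claim, and it is here that local injectivity is used in an essential way; the rest is formal.

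Next I would deduce that $\Psi^0$ is injective. Suppose $\Psi^0(u)=\Psi^0(w)$ with $u\ne w$. Since $\Lambda$ is connected, there is a path $p\in\caP_{u,w}$, necessarily of length $n\ge1$. By the claim, $\Psi^1(p)$ is a reduced path of length $n\ge1$ in $\Delta$ whose terminus and origin both equal $\Psi^0(u)=\Psi^0(w)$, i.e. a nontrivial reduced cycle. As $\Delta$ is a tree this is impossible, so $u=w$.

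Finally I would show $\Psi^1$ is injective. If $\Psi^1(e)=\Psi^1(f)$, then applying $t$ gives $\Psi^0(t(e))=\Psi^0(t(f))$, whence $t(e)=t(f)$ by the injectivity of $\Psi^0$ just established. Thus $e,f\in\mathrm{st}_\Lambda(t(e))$, and (iii) at the vertex $t(e)$ yields $e=f$. Therefore $\Psi=(\Psi^0,\Psi^1)$ is injective. I do not expect any genuine obstacle: the only delicate point is the bookkeeping in the path claim, namely recognizing that a backtracking in the image corresponds, via compatibility with edge reversal, to two distinct edges in a common star being identified, which (iii) forbids.
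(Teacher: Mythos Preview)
Your proof is correct and follows essentially the same route as the paper: both argue that local injectivity on stars forces a reduced path in $\Lambda$ to map to a reduced path in $\Delta$, so a hypothetical pair $u\ne w$ with $\Psi^0(u)=\Psi^0(w)$ would yield a nontrivial reduced cycle in the tree $\Delta$. Your write-up is in fact more careful than the paper's on two points: you spell out the backtracking argument explicitly, and you deduce global injectivity of $\Psi^1$ from injectivity of $\Psi^0$ together with (iii), whereas the paper's phrase ``$\Psi^1$ is injective by hypothesis'' glosses over this step.
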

\begin{proof}
	Since $\Psi^1$ is injective by hypothesis, it suffices to show that $\Psi^0$ is injective.
	Suppose that there exist $v,w\in\Lambda^0$, $v\ne w$, such that $\Psi^0(v)=\Psi^0(w)$.
	Since $\Lambda$ is connected, there exists a path $p\in\caP_{v,w}(\Lambda)$ from
	$v$ to $w$.
	Since $\Delta$ is a tree, by (iii) one has that $\Psi_e(p)$ is a reduced path from 
	$\Psi^0(v)$ to $\Psi^0(v)$, which is a contradiction.
	Thus, $\Psi^0$ is injective.
\end{proof}

\begin{rem}
	\label{rem:fundom}
	We recall that by the construction of the desingularization $\frD(\caG,F)$,
	%of the action of the groupoid $\caG$ on the forest $F$, 
	one has that $\Gamma$ is a fundamental domain
	for the action of $\caG$ on $F$.
	That is, for any $w\in F^0$ there exists $v\in\Gamma^0$ such that
	$w$ is in the saturation of $\image\,\sigma_v$, i.e., there exists
	$u\in\image\,\sigma_v\subseteq F^0$ such that $w$ is in the orbit of $u$.
	Similarly, for any $\eps\in F^1$ there exists $e\in\Gamma^1$ such that 
	$\eps$ is in the saturation of $\image\,\sigma_e$, i.e., there exists
	$f\in\image\,\sigma_e\subseteq F^1$ such that $\eps$ is in the orbit of $f$.
\end{rem}

We are now ready to prove the structure theorem.

\begin{proof}[Proof of Theorem \ref{thm:structure}]
	Let $\frD\big(\pi_1(\caG(\Gamma)), \,X_{\caG(\Gamma)}\big)$
	be the desingularization of the action of $\pi_1(\caG(\Gamma))$ on $X_{\caG(\Gamma)}$, 
	given by a graph of groupoids $\frG(\Delta)$ based on a graph $\Delta$ and
	a maximal subtree $\Lambda\subseteq\Delta$ such that $(\frG(\Lambda),\rho)$
	is a tree of representatives of the action of $\pi_1(\caG(\Gamma))$ on $X_{\caG(\Gamma)}$.
	%	$\frD(\caG,F)=\big(\,\caG(\Gamma), T, \chi\,\big)$ be the 
	%	desingularization of the action of $\caG$ on $F$,
	%	and let $\frD\big(\pi_1(\caG(\Gamma)), \,X_{\caG(\Gamma)}\big)
	%	=\big(\frG(\Delta), \Lambda, \rho \big)$
	%	be the desingularization of the action of $\pi_1(\caG(\Gamma))$ on $X_{\caG(\Gamma)}$.
	For $v\in\Gamma^0$, consider the partial section $\sigma_v$ associated to $v$ 
	and the stabilizer
	\begin{equation*}
	\Stab_\caG (\sigma_v)=\big\{\, g\in\caG\mid \mu^0(g,\sigma_v(s(g)))=\sigma_v(r(g)),\, s(g),r(g)\in\dom(\sigma_v)\,\big\}.
	\end{equation*}
	Fix a unit $x\in\pi_1(\caG(\Gamma))^{(0)}$ and consider the vertex 
	$x\caG_v[v]\in X_{\caG(\Gamma)}^0$.
	Then there exists $w\in \Delta^0$ such that $x\caG_v[v]$ is in the saturation of $\image\,\sigma_w$.
	Consider the stabilizer
	\begin{equation*}
	\Stab_{\pi_1(\caG(\Gamma))} (\sigma_{w})=\big\{\,
	p\in\pi_1(\caG(\Gamma)) \mid \mu^0(p,\sigma_w(s(p)))=\sigma_w(r(p)),\, 
	s(p), r(p)\in\dom(\sigma_w)
	\,\big\}.
	\end{equation*}
	Then the map 
	\begin{equation}
	\label{eq:psirestr}
	\psi\RestrTo{\Stab_{\pi_1(\caG(\Gamma))}(\sigma_w)}\colon
	\Stab_{\pi_1(\caG(\Gamma))}(\sigma_w)\to \Stab_{\caG}(\sigma_v)
	\end{equation}
	is an isomorphism by construction.
	Then one has that the following diagram of graphs commutes:
	\begin{center}
		\begin{tikzcd}[column sep=normal, row sep=large]
			& X_{\caG(\Gamma)} \ar[rr, "\Psi"] 
			& 
			& F  
			&[2.0em]\\
			&  \caF^{\mathrm{pi}}(\Lambda) \ar[u, "\rho "] \ar[rr, "\widehat{\Psi}"]
			&
			&  \caF^{\mathrm{pi}}(T) \ar[u, "\chi "]
		\end{tikzcd}
	\end{center}
	where $\caF^\mathrm{pi}(\Lambda)$ and $\caF^\mathrm{pi}(T)$ are the 
	forests of partial isomorphisms induced by $\frG(\Lambda)$ and $\caG(T)$, respectively
	(cf. Remark \ref{rem:fpi}).
	In particular, $\widehat{\Psi}$ is an isomorphism of graphs.
	Hence one has that 
	\begin{equation}
	\mu^0\Big(\,\psi\big(\pi_1(\caG(\Gamma))\big),\, \image\,\sigma_v\,\Big)
	= \mu^0\big(\,\caG,\, \image\,\sigma_v\,\big)
	\end{equation}
	and, by \eqref{eq:psirestr}, the groupoid homomorphism 
	$\psi\colon\pi_1(\caG(\Gamma))\to \caG$ is surjective.
	Thus, since 
	\begin{equation}
	\begin{aligned}
	F^0&=\bigsqcup_{v\in\Gamma^0} \mu^0(\caG,\image\,\sigma_v) ,\\
	F^1&=\bigsqcup_{e\in\Gamma^1} \mu^1(\caG,\image\,\sigma_e),
	\end{aligned}
	\end{equation}
	by Remark \ref{rem:fundom}, one has that $\Psi$ is surjective.
	
	Note that $X_{\caG(\Gamma)}$ is fibered on $\pi_1(\caG(\Gamma))$ and each
	fiber corresponds to a connected component of $X_{\caG(\Gamma)}$, i.e.
	\begin{equation}
		X_{\caG(\Gamma)}=\bigsqcup_{y\in\pi_1(\caG(\Gamma))^{(0)}} yX_{\caG(\Gamma)},
	\end{equation}
	where $yX_{\caG(\Gamma)}$ denotes the fiber of $y\in\pi_1(\caG(\Gamma))^{(0)}$ 
	(cf. Remark \ref{rem:bsforest}).
	Similarly, the forest $F$ is fibered on $\caG^{(0)}$ and hence one has
	\begin{equation}
		F=\bigsqcup_{a\in\caG^{(0)}} aF.
	\end{equation}
	Since $\Psi$ is a $\psi$-equivariant homomorphism of graphs, 
	one has that $\Psi(yX_{\caG(\Gamma)})\subseteq F_{\psi(y)}$
	for all $y\in\pi_1(\caG(\Gamma))^{(0)}$.
	%Let $v\in\Gamma^0$ and $x\in\caG_v^{(0)}$.
	Then for $e\in\mathrm{st}_\Gamma(v)$ one has canonical bijections 
	\begin{center}
		\begin{tikzcd}[column sep=normal, row sep=large]
			& 
			& \bigsqcup_{e\in\mathrm{st}_\Gamma(v)} x\caG_v / \mathrm{im}(\alpha_{\bar{e}}) 
			\ar[dl, "\bigsqcup_{e\in\mathrm{st}_\Gamma(v)} a_e\quad" left] 
			\ar[dr, "\bigsqcup_{e\in\mathrm{st}_\Gamma(v)} b_e"]
			& 
			&[2.0em]\\
			&  \mathrm{st}_{X_{\caG(\Gamma)}}(x\caG_v[v]) \ar[rr, dashed, "\Psi^1|_{\mathrm{st}_{X_{\caG(\Gamma)}}(x\caG_v[v])}"] 
			&
			&  \mathrm{st}_{F}(\sigma_v(x))
		\end{tikzcd}
	\end{center}
	given by
	\begin{align}
	a_e(g)&=g\caH_{|e|}[e],\\
	b_e(g)&=\sigma_e(s(g)),
	\end{align}
	for $g\in\caG_v$ with $r(g)=x$.
	Hence $\Psi^1\RestrTo{\mathrm{st}_{X_{\caG(\Gamma)}}(x\caG_v[v])}$ is bijective for every 
	$x\caG_v[v]\in X_{\caG(\Gamma)}^0$.
	Thus, for any $y\in\pi_1(\caG(\Gamma))^{(0)}$ the restriction 
	\begin{equation}
	\Psi|_{yX_{\caG(\Gamma)}}\colon yX_{\caG(\Gamma)}\to F_{\psi(y)}
	\end{equation}
	is an injective homomorphism of graphs  
	by Lemma \ref{lem:injhomo}.
	Therefore, $\Psi$ is injective.
	Thus, $\Psi$ is an isomorphism of graphs.
	
	It remains to show that $\psi$ is injective.
	Let $N:=ker(\psi)=\{p\in\pi_1(\caG(\Gamma))\mid \psi(p)\in\caG^{(0)}\}$.
	For $w\in \Lambda^0$, 
	let $\pi_w=\Stab_{\pi_1(\caG(\Gamma))}(\sigma_w)$.
	One has that $N \cap \pi_w=\pi_w^{(0)}$ 
	by \eqref{eq:psirestr}.
	On the other hand, for $n\in N$ and $v\in X_{\caG(\Gamma)}^{(0)}$ 
	such that $s(n)=\tilde{\varphi}(v)$ and $v$ is in the saturation of $\image\,\sigma_w$,
	one has
	\begin{equation}
	\Psi^0\big(\mu^0(n,v)\big)=\mu^0\big(\psi(n), \Psi^0(v)\big)= \Psi^0(v)
	\end{equation}
	since $\Psi$ is a $\psi$-equivariant homomorphism of graphs.
	Since $\Psi$ is injective, one has that $\mu^0(n,v)=v$, i.e., 
	$n=\tilde{\varphi}(v)\in N\cap \pi_w=\pi_w^{(0)}$.
	Thus, $N=\pi_1(\caG(\Gamma))^{(0)}$ and hence 
	it is injective by Proposition \ref{prop:homoinjker}.
	This proves that $\psi$ is an isomorphism of groupoids.
\end{proof}

\begin{example}
%	Let $\Gamma$ be the graph
%	\begin{center}
%		\begin{tikzpicture}[->,>=stealth']
%		\node[style=circle,fill=red,inner sep=0pt, minimum size=1.5mm,label=above:$v$] (n1) at (3,0) {};
%		\node[style=circle,fill=blue,inner sep=0pt, minimum size=1.5mm,label=above:$w$] (n2) at (1,0)  {};	
%		\path[semithick]
%		(n1) edge node [left][above] {$e$} (n2);
%		\path[dashed]
%		(n2) edge [bend right] node [right][below] {$\bar{e}$} (n1);	
%		%			\draw [->,shorten >=1pt,>=stealth,semithick] (n1) -- (n2)
%		%			node [above,text centered,midway,semithick]{$e$};		
%		\end{tikzpicture}
%	\end{center}
%	\vskip0.5cm
	Let $\caG(\Gamma)$ be the graph of groupoids
	\begin{center}
		\begin{tikzpicture}[->,>=stealth']
		\node[style=circle,fill=red,inner sep=0pt, minimum size=2mm,label=above:$\caG_v$] (n1) at (3,0) {};
		\node[style=circle,fill=blue,inner sep=0pt, minimum size=2mm,label=above:$\caG_w$] (n2) at (1,0)  {};	
		\path[semithick]
		(n1) edge node [left][above] {$\caG_e$} (n2);	
		\path[dashed]
		(n2) edge [bend right] node [right][below] {} (n1);
		\end{tikzpicture}
	\end{center}
	where $\Gamma=\big(\{v,w\}, \{e\}\big)$ and the vertex and edge groupoids are given by 
	\begin{align}
	\caG_v&=\{x_1,x_2,a_v, a_v^{-1}\}, \quad s(a_v)=x_1, \, r(a_v)=x_2,\notag\\
	\caG_w&=\{y_1,y_2,a_w,a_w^{-1}\},\quad s(a_w)=y_1, \, r(a_w)=y_2,\notag\\
%	\intertext{and}
	\caG_e&=\caG_{\bar{e}}=\{z_1,z_2\}%\quad s(\beta)=z_1, \, r(\beta)=z_2
	.\notag
	\end{align}
%	That is, $\caG(\Gamma)$ is the graph of groupoids
%	\vskip0.5cm
%	\begin{center}
%		\begin{tikzpicture}[->,>=stealth']
%		\node[style=circle,fill,inner sep=0pt, minimum size=1.2mm,label=below:$v$] (nv) at (4.5,0) {};
%		\node[style=circle,fill,inner sep=0pt, minimum size=1.2mm,label=below:$w$] (nw) at (2.5,0) {};
%		\node[style=circle,fill=blue,inner sep=0pt, minimum size=1.2mm,label=below:$y_1$] (n1) at (1,-1) {};
%		\node[style=circle,fill=blue,inner sep=0pt, minimum size=1.2mm,label=above:$y_2$] (n2) at (1,1)  {};	
%		\node[style=circle,fill=red,inner sep=0pt, minimum size=1.2mm,label=below:$x_1$] (na) at (6,-1) {};
%		\node[style=circle,fill=red,inner sep=0pt, minimum size=1.2mm,label=above:$x_2$] (nb) at (6,1)  {};
%		\node[style=circle,fill=aogreen,inner sep=0pt, minimum size=1.2mm,label=above:$z_1$] (z1) at (4.2,1.5) {};
%		\node[style=circle,fill=aogreen,inner sep=0pt, minimum size=1.2mm,label=above:$z_2$] (z2) at (2.8,1.5)  {};
%		
%		\path[semithick]
%		(nv) edge node [above] {$e$} (nw);
%		\path[dashed]
%		(nw) edge [bend right] node [right][below] {$\bar{e}$} (nv);
%		
%		\path[blue,semithick]
%		(n1) edge [bend left] node [left] {$a_w$} (n2)
%		(n2) edge [bend left] node [right] {$a_w^{-1}$} (n1);
%		
%		\path[red,semithick]
%		(na) edge [bend left] node [left] {$a_v$} (nb)
%		(nb) edge [bend left] node [right] {$a_v^{-1}$} (na);
%		
%		%		\path[aogreen,semithick]
%		%		(z1) edge [bend left] node [below] {$\beta$} (z2)
%		%		(z2) edge [bend left] node [above] {$\beta^{-1}$} (z1);
%		\end{tikzpicture}
%	\end{center}
	The monomorphisms $\alpha_e\colon\caG_e\to\caG_w$ 
	and $\alpha_{\bar{e}}\colon\caG_e\to\caG_v$ are given by 
	\begin{equation*}
	\alpha_e(z_i)=y_i, 
	%\quad\alpha_e(\beta)=a_w,\quad\alpha_e(\beta^{-1})=a_w^{-1},
	\quad
	\alpha_{\bar{e}}(z_i)=x_i,
	%\quad\alpha_{\bar{e}}(\beta)=a_v,\quad\alpha_{\bar{e}}(\beta^{-1})=a_v^{-1},
	\notag
	\end{equation*}
	for $i=1,2$. Then one has 
	$\caH_e=\image(\alpha_e)=\caG_w^{(0)}$,
	$\caH_{\bar{e}}=\image(\alpha_{\bar{e}})=\caG_v^{(0)}$,
	and trasversals
	$\caT_e=\caG_w$ and 
	$\caT_{\bar{e}}=\caG_v$.
	We identify $x_i$ with $y_i$ for $i=1,2$ via the relation $\eqref{eq:relpi}$.
	Then the fundamental groupoid $\pi_1\big(\caG(\Gamma)\big)$ has 
	unit space $\pi_1\big(\caG(\Gamma)\big)^{(0)}=\{u_1,u_2\}$,
	generators 
	$
	\{\,a_v,\,a_v^{-1},\, a_w,\, a_w^{-1},\, 
	y_1ex_1, \,y_2ex_2, \,x_1\bar{e}y_1,\, x_2\bar{e}y_2\,\}
	$
	and defining relations
	$a_v^{-1} a_v=u_1$, $a_v a_v^{-1}=u_2$,
	$a_w^{-1} a_w=u_1$, $a_w a_w^{-1}=u_2$,
	together with relations
	\begin{align}
	&x_1 \bar{e} y_1=\iid_{u_1},\quad x_2 \bar{e} y_2=\iid_{u_2},\quad y_1 e x_1=\iid_{u_1},\quad 
	y_2e x_2=\iid_{u_2}; \tag{R1}\\
	&e x_1\bar{e}=y_1,\quad e x_2\bar{e}=y_2, \quad
	\bar{e} y_1 e=x_1,\quad \bar{e}y_2 e=x_2.\tag{R2}
	%		&e a_v\bar{e}=a_w,\quad e a_v^{-1}\bar{e}=a_w^{-1}, \quad
	%		\bar{e} a_w e=a_v,\quad \bar{e}a_w^{-1} e=a_v^{-1}.\tag{R2}
	\end{align}
	Hence one has 
	\begin{align}
	\pi_1(\caG(\Gamma))=\{\,
	&u_1,u_2, \notag\\
	&a_v,a_v^{-1},a_w,a_w^{-1},\notag\\
	&a_v a_w^{-1},\, a_v^{-1} a_w,\,a_w a_v^{-1}, a_w^{-1} a_v\notag\\
	&a_v a_w^{-1} a_v,\, a_v^{-1} a_w a_v^{-1},\,a_w a_v^{-1} a_w, a_w^{-1} a_v a_w^{-1}\notag\\
	& \cdots \,\}.\notag
	\end{align}
	That is, the morphisms of $\pi_1(\caG(\Gamma))$ are the finite words 
	given by the alternation of composable red and blue arrows 
	where no two successive arrows of the same color occur,
	as shown in the subsequent diagram:
	\begin{center}
		\begin{tikzpicture}[->,>=stealth']
		\node[style=circle,fill=black,inner sep=0pt, minimum size=2mm,label=left:{$u_1$}] (n1) at (-3,0) {};
		\node[style=circle,fill=black,inner sep=0pt, minimum size=2mm,label=right:{$u_2$}] (n2) at (0,0)  {};
		
		\path[red,semithick]
		(n1) edge [bend left=20] node [above] {$a_v$} (n2)
		(n2) edge [bend left=20] node [below] {$a_v^{-1}$} (n1);	
		
		\path[blue,semithick]
		(n1) edge [bend left=80] node [above] {$a_w$} (n2)
		(n2) edge [bend left=80] node [below] {$a_w^{-1}$} (n1);	
		\end{tikzpicture}
	\end{center}
	The Bass-Serre forest $X_{\caG(\Gamma)}$ is defined as follows:
	\begin{align}
	X_{\caG(\Gamma)}^0&=\{\,p\caG_v[v]\mid p\in\pi_1(\caG(\Gamma))\,\}\sqcup
	\{\,p\caG_w[w]\mid p\in\pi_1(\caG(\Gamma))\,\},\notag\\
	X_{\caG(\Gamma)}^1&=\{\,p\caH_{|e|}[e]\mid p\in\pi_1(\caG(\Gamma))\,\},\notag
	%	\sqcup\{\,p\caH_{|e|}[\bar{e}]\mid p\in\pi_1(\caG(\Gamma))\,\}\notag.
	\end{align}
	where $p\caG_v=\{pg\mid g\in\caG_v, s(p)=r(g)\}$ (see Definition \ref{def:BSforest}).
	Then $X_{\caG(\Gamma)}$ is a bipartite graph, since the vertices of $X_{\caG(\Gamma)}$ are
	naturally partitioned into two disjoint sets and each edge has initial vertex
	in one of these sets and terminal vertex in the other set.	
	The Bass-Serre forest $X_{\caG(\Gamma)}$ is the union of the graphs $u X_{\caG(\Gamma)}$, 
	for $u\in\pi_1(\caG)^{(0)}$, i.e.,
	\begin{equation*}
	X_{\caG(\Gamma)}=u_1 X_{\caG(\Gamma)}\sqcup u_2 X_{\caG(\Gamma)}:
	\end{equation*}
	
	\begin{center}
		\begin{tikzpicture}[->,>=stealth']
		\node[style=circle,fill=red,inner sep=0pt, minimum size=1.5mm,label=left:{$u_1\caG_v[v]$}] (n1) at (-4,0) {};
		\node[style=circle,fill=blue,inner sep=0pt, minimum size=1.5mm,label=right:{$u_1\caG_w[w]$}] (n2) at (-1,0) {};
		\node[style=circle,fill=blue,inner sep=0pt, minimum size=1.5mm,label=left:{$a_v^{-1}\caG_w[w]$}] (n3) at (-4,-2)  {};
		\node[style=circle,fill=red,inner sep=0pt, minimum size=1.5mm,label=right:{$a_w^{-1}\caG_v[v]$}] (n4) at (-1,-2)  {};
		\node[style=circle,fill=red,inner sep=0pt, minimum size=1.5mm,label=left:{$u_2\caG_v[v]$}] (n5) at (3,0)  {};	
		\node[style=circle,fill=blue,inner sep=0pt, minimum size=1.5mm,label=right:{$u_2\caG_w[w]$}] (n6) at (6,0)  {};	
		\node[style=circle,fill=blue,inner sep=0pt, minimum size=1.5mm,label=left:{$a_v\caG_w[w]$}] (n7) at (3,-2)  {};
		\node[style=circle,fill=red,inner sep=0pt, minimum size=1.5mm,label=right:{$a_w\caG_v[v]$}] (n8) at (6,-2)  {};
		\node[style=circle,inner sep=0pt, minimum size=1.5mm,label=:{$\vdots$}] (n9) at (-4,-3)  {};
		\node[style=circle,inner sep=0pt, minimum size=1.5mm,label=:{$\vdots$}] (n9) at (-1,-3)  {};
		\node[style=circle,inner sep=0pt, minimum size=1.5mm,label=:{$\vdots$}] (n9) at (3,-3)  {};
		\node[style=circle,inner sep=0pt, minimum size=1.5mm,label=:{$\vdots$}] (n9) at (6,-3)  {};
		%		\node[style=circle,fill=blue,inner sep=0pt, minimum size=1.5mm,label=above:{$y_1\caG_w$}] (n5) at (1,0)  {};
		%		\node[style=circle,fill=red,inner sep=0pt, minimum size=1.5mm,label=below:{$y_1e\, x_1\caG_v$}] (n6) at (1,-1.5)  {};
		%		\node[style=circle,fill=blue,inner sep=0pt, minimum size=1.5mm,label=above:{$y_2\caG_w$}] (n7) at (3,0)  {};
		%		\node[style=circle,fill=red,inner sep=0pt, minimum size=1.5mm,label=below:{$y_2 e \,x_2\caG_v$}] (n8) at (3,-1.5)  {};
		
		\path[semithick]
		(n1) edge node [right] [above] {$u_1\caH_e[e]$} (n2)
		(n1) edge node [left][left] {$a_v^{-1}\caH_e[e]$} (n3)
		(n4) edge node [right] {$a_w^{-1}\caH_e[e]$} (n2)
		(n5) edge node [above] {$u_2\caH_e[e]$} (n6)
		(n5) edge node [left] {$a_v\caH_e[e]$} (n7)
		(n8) edge node [left][right] {$a_w\caH_e[e]$} (n6);
		\end{tikzpicture}
	\end{center}
	The fundamental groupoid $\pi_1(\caG(\Gamma))$ acts on $X_{\caG(\Gamma)}$ by left multiplication with momentum map $\varphi\colon X_{\caG(\Gamma)}^0\to\pi_1(\caG)^{(0)}$ given by $\varphi(q\caG_v)=r(q)$.
	Hence one has that 
	$X_{\caG(\Gamma)}=\varphi^{-1}(u_1)\sqcup\varphi^{-1}(u_2)$.
	There are two orbits of the action of $\pi_1(\caG(\Gamma))$ on $X_{\caG(\Gamma)}^0$:
	one is given by the collection of the red vertices and the other is given by the collection of the
	blue vertices, i.e., 
	\begin{equation*}
	X_{\caG(\Gamma)}^0=\mu^0\big(\pi_1(\caG(\Gamma)), u_1\caG_v \big)\sqcup
	\mu^0\big(\pi_1(\caG(\Gamma)), u_1\caG_w \big).
	\end{equation*}
	We enumerate the fibers of $u_1$ and $u_2$ (see the proof of Theorem \ref{thm:desingular}), i.e., we enumerate the vertices of 
	$u_1 X_{\caG(\Gamma)}$ and $u_2 X_\caG(\Gamma)$ as follows:
	\begin{center}
		\begin{tikzpicture}[->,>=stealth']
		\node[style=circle,fill=red,inner sep=0pt, minimum size=1.5mm,label=left:{1}] (n1) at (-4,0) {};
		\node[style=circle,fill=blue,inner sep=0pt, minimum size=1.5mm,label=right:{2}] (n2) at (-1,0) {};
		\node[style=circle,fill=blue,inner sep=0pt, minimum size=1.5mm,label=left:{4}] (n3) at (-4,-2)  {};
		\node[style=circle,fill=red,inner sep=0pt, minimum size=1.5mm,label=right:{3}] (n4) at (-1,-2)  {};
		\node[style=circle,fill=red,inner sep=0pt, minimum size=1.5mm,label=left:{1}] (n5) at (3,0)  {};	
		\node[style=circle,fill=blue,inner sep=0pt, minimum size=1.5mm,label=right:{2}] (n6) at (6,0)  {};	
		\node[style=circle,fill=blue,inner sep=0pt, minimum size=1.5mm,label=left:{4}] (n7) at (3,-2)  {};
		\node[style=circle,fill=red,inner sep=0pt, minimum size=1.5mm,label=right:{3}] (n8) at (6,-2)  {};
		\node[style=circle,inner sep=0pt, minimum size=1.5mm,label=:{$\vdots$}] (n9) at (-4,-3)  {};
		\node[style=circle,inner sep=0pt, minimum size=1.5mm,label=:{$\vdots$}] (n9) at (-1,-3)  {};
		\node[style=circle,inner sep=0pt, minimum size=1.5mm,label=:{$\vdots$}] (n9) at (3,-3)  {};
		\node[style=circle,inner sep=0pt, minimum size=1.5mm,label=:{$\vdots$}] (n9) at (6,-3)  {};
		%		\node[style=circle,fill=blue,inner sep=0pt, minimum size=1.5mm,label=above:{$y_1\caG_w$}] (n5) at (1,0)  {};
		%		\node[style=circle,fill=red,inner sep=0pt, minimum size=1.5mm,label=below:{$y_1e\, x_1\caG_v$}] (n6) at (1,-1.5)  {};
		%		\node[style=circle,fill=blue,inner sep=0pt, minimum size=1.5mm,label=above:{$y_2\caG_w$}] (n7) at (3,0)  {};
		%		\node[style=circle,fill=red,inner sep=0pt, minimum size=1.5mm,label=below:{$y_2 e \,x_2\caG_v$}] (n8) at (3,-1.5)  {};
		
		\path[semithick]
		(n1) edge node [right] [above] {} (n2)
		(n1) edge node [left][left] {} (n3)
		(n4) edge node [right] {} (n2)
		(n5) edge node [above] {} (n6)
		(n5) edge node [left] {} (n7)
		(n8) edge node [left][right] {} (n6);
		\end{tikzpicture}
	\end{center}
	Such enumeration gives two partial sections of vertices $\sigma_1,\sigma_2\colon \pi_1(\caG(\Gamma))^{(0)}\to X_{\caG(\Gamma)}^{0}$ and a partial section of edges $\sigma_\be\colon \pi_1(\caG(\Gamma))^{(0)}\to X_{\caG(\Gamma)}^1$
	defined by 
	\begin{align}
	\sigma_1(u_i)&=u_i\caG_v[v], \notag\\
	\sigma_2(u_i)&=u_i\caG_w[w],\notag\\
	\sigma_\be(u_i)&=u_i\caH_e[e],\notag
	\end{align}
	for $i=1,2$.
	Hence, we have 
	a tree of representatives $\frG(T)$ based on the segment tree 
	$T=\big(\{v_1,v_2\}, \{e,\bar{e}\}\big)$ as follows
	\begin{center}
		\begin{tikzpicture}[->,>=stealth']
		\node[label={$T$}] (n) at (-1,2) {};
		\node[label={$\frG(T)$}] (n) at (-1,-0.3) {};
		
		\node[style=circle,fill,inner sep=0pt, minimum size=1mm,label=above:{$v_1$}] (n1) at (3,2.5) {};
		\node[style=circle,fill,inner sep=0pt, minimum size=1mm,label=above:{$v_2$}] (n2) at (1,2.5)  {};
		\path[semithick]
		(n1) edge node [left][above] {$e$} (n2);	
		\path[dashed]
		(n2) edge [bend right] node [right][below] {$\bar{e}$} (n1);
		
		\node[style=circle,fill=red,inner sep=0pt, minimum size=2mm,label=above:{$\frG_1$}] (n3) at (3,0) {};
		\node[style=circle,fill=blue,inner sep=0pt, minimum size=2mm,label=above:{$\frG_2$}] (n4) at (1,0)  {};	
		\path[semithick]
		(n3) edge node [left][above] {$\frG_\be$} (n4);
		\end{tikzpicture}
	\end{center}
	where $\frG_1=\Stab_{\pi_1(\caG(\Gamma))}(\sigma_1)$, 
	$\frG_2=\Stab_{\pi_1(\caG(\Gamma))}(\sigma_2)$,
	and $\frG_\be=\Stab_{\pi_1(\caG(\Gamma))}(\sigma_\be)$.
	In particular, $\frG(T)$ is a desingularization of the action of $\pi_1(\caG(\Gamma))$
	on $X_{\caG(\Gamma)}$.
\end{example}

%%%%%%%%%%%%%
%%%%%%%%%%%%% BIBLIOGRAPHY
%%%%%%%%%%%%%

\bibliography{bsgroupoids}
\bibliographystyle{amsplain}

\end{document}